\DeclareMathOperator{\Hom}{Hom}
\DeclareMathOperator{\interior}{int}
\DeclareMathOperator{\rk}{rk}
\DeclareMathOperator{\lk}{lk}
\DeclareMathOperator{\pt}{pt}
\DeclareMathOperator{\ord}{ord}
\DeclareMathOperator{\Char}{Char}
\renewcommand{\emptyset}{\varnothing}
\renewcommand{\phi}{\varphi}
\newcommand{\eps}{\varepsilon}
\def\B{\mathcal{B}}
\def\Z{\mathbb{Z}}
\def\N{\mathbb{N}}
\def\Q{\mathbb{Q}}
\def\lmat{\left(\begin{smallmatrix}}
\def\rmat{\end{smallmatrix}\right)}
\def\sm{\setminus}
\def\wti{\widetilde}
\def\id{\operatorname{id}}
\newcommand{\tmfrac}[2]{\mbox{\large$\frac{#1}{#2}$}} 
\theoremstyle{plain}
\newtheorem{theorem}{Theorem}[section]
\newtheorem{proposition}[theorem]{Proposition}
\newtheorem{lemma}[theorem]{Lemma}
\newtheorem{corollary}[theorem]{Corollary}
\newtheorem{conjecture}[theorem]{Conjecture}
\newtheorem{question}[theorem]{Question}
\theoremstyle{definition}
\newtheorem{definition}[theorem]{Definition}
\theoremstyle{remark}
\newtheorem*{remark}{Remark}
\newtheorem*{claim}{Claim}
\def\ba{\begin{array}}
\def\ea{\end{array}}
\def\ol{\overline}
\def\bn{\begin{enumerate}}
\def\en{\end{enumerate}}
\newcounter{myenum1}
  {\end{list}}
\newenvironment{flushenumerate(i)}{
\begin{enumerate}[(i)]
  \setlength{\leftmargin}{0pt}
}{\end{enumerate}}
\begin{document}
\title[Satellites and concordance of knots in 3--manifolds]{Satellites and concordance of knots in 3--manifolds}

\author{Stefan Friedl}
\address{Fakult\"at f\"ur Mathematik\\ Universit\"at Regensburg\\   Germany}
\email{stefan.friedl@mathematik.uni-regensburg.de}

\author{Matthias Nagel}
\address{D\'epartement de Math\'ematiques,
Universit\'e du Qu\'ebec \`a Montr\'eal, Canada}
\email{nagel@cirget.ca}

\author{Patrick Orson}
\address{D\'epartement de Math\'ematiques,
Universit\'e du Qu\'ebec \`a Montr\'eal, Canada}
\email{patrick.orson@cirget.ca}

\author{Mark Powell}
\address{D\'epartement de Math\'ematiques,
Universit\'e du Qu\'ebec \`a Montr\'eal, Canada}
\email{mark@cirget.ca}

\date{\today}

\def\subjclassname{\textup{2010} Mathematics Subject Classification}
\expandafter\let\csname subjclassname@1991\endcsname=\subjclassname
\expandafter\let\csname subjclassname@2000\endcsname=\subjclassname
\subjclass{%
 57M27, 
 57N70, 
}
\keywords{}

\begin{abstract}
Given a $3$--manifold $Y$ and a free homotopy class in $[S^1,Y]$, we
investigate the set of topological concordance classes
of knots in $Y \times [0,1]$ representing the given homotopy class.
The concordance group of knots in the $3$--sphere acts on
this set. We show in many cases that the action is not transitive,
using two techniques.
Our first technique uses Reidemeister torsion invariants, and the second uses linking
numbers in covering spaces.  In particular, we show using covering links that
for the trivial homotopy class, and for any $3$--manifold
that is not the $3$--sphere, the set of orbits is infinite.  On the
other hand, for the case that $Y=S^1 \times S^2$, we apply
topological surgery theory to show that all knots with winding number one
are concordant.
\end{abstract}
\maketitle

\section{Introduction}

In this paper we study the problem of concordance of knots in general 3--manifolds. Throughout the paper, embeddings are assumed to be locally flat unless specified otherwise. Let $Y^3$ be a closed, oriented 3--manifold and fix an orientation for $S^1$. An embedding $L\colon \bigsqcup_mS^1\hookrightarrow Y^3$, considered up to ambient isotopy, is called an \emph{$m$--component link} and a 1--component link is called a \emph{knot}. We will sometimes write $L\subset Y$ as a shorthand for a link. Links $L_0$ and $L_1$ are \emph{concordant} if there exists a proper embedding $A\colon \bigsqcup_mS^1\times[0,1]\hookrightarrow Y\times[0,1]$ such that $L_0=A|_{\bigsqcup_mS^1\times\{0\}}\subset Y\times\{0\}$ and $L_1=A|_{\bigsqcup_mS^1\times\{1\}}\subset Y\times\{1\}$, in which case we say that $A$ is a \emph{concordance} between the links.

Denote the equivalence relation of concordance by $L_0\sim L_1$ and the set of concordance equivalence classes of knots in $Y$ by $\mathcal{C}(Y)$. For $Y=S^3$ we write $\mathcal{C}=\mathcal{C}(S^3)$. For topological spaces $U,V$, denote the set of \emph{free} (i.e.\ unbased) homotopy classes of maps $U\to V$ by $[U,V]$. The composition of any continuous function $A$ with projection to $Y$\[S^1\times[0,1]\xrightarrow{A} Y\times[0,1]\to Y\]is a continuous function, so concordant knots have the same unbased homotopy class.
We denote  the set of concordance classes of knots which
realise a given unbased homotopy class $x\in[S^1,Y]$ by $\mathcal{C}_x(Y)$, and we observe there is a partition of sets
\[\mathcal{C}(Y)=\bigsqcup_{x\in[S^1,Y]}\mathcal{C}_x(Y).\]
To study concordance of knots in 3--manifolds, we will fix a pair $(Y,x)$ with
$x\in[S^1,Y]$, and investigate the set $\mathcal{C}_x(Y)$.

\subsection{Almost-concordance}

The connected sum of knots $(S^3,J)\# (Y,K)$ defines a new knot $(Y,J\#K)$, which is freely homotopic to $K$ in~$Y$, since all knots in $S^3$ are freely null-homotopic.

\begin{definition}\label{def:local}
For each pair $(Y,x)$, the \emph{local action} of the concordance group~$\mathcal{C}$
on the set~$\mathcal{C}_x(Y)$ is
defined by
\[\mathcal{C}\times\mathcal{C}_x(Y)\to \mathcal{C}_x(Y)\qquad
([J],[K])\mapsto [J\# K].\]For $K\in \mathcal{C}_x(Y)$ the \emph{almost-concordance} class of $K$ is the orbit of $K$ under the local action of
$\mathcal{C}$ on $\mathcal{C}_x(Y)$.
\end{definition}

The study of this local action can be traced back to Milnor's study of link homotopy using what are now called the Milnor's $\overline{\mu}$ invariants~\cite{Milnor:1957-1}. These invariants of a link $L\subset S^3$ come from looking at quotients of $\pi_1(S^3\sm L)$ by the lower central subgroups, which has the intentional effect of making them blind to local knotting. Extensions of these invariants to \emph{knots} in general $3$--manifolds were studied by Miller~\cite{MR1354382}, Schneiderman~\cite{MR2012959} and Heck~\cite{Heck:2011}. In particular, Miller produces an infinite family of knots in $Y=S^1\times S^1\times S^1$, each homotopic to $S^1\times\{\pt\}\times\{\pt\}$, and his invariants can be used to show that these knots are mutually distinct in almost-concordance.

Celoria~\cite{Celoria:2016jk}, who coined the term `almost-concordance', recently studied the case of the null-homotopic class in lens spaces $L(n,1)$ with $n\geq3$. Using a generalisation of the $\tau$ invariant from knot Floer homology, he showed the existence of an infinite family of knots, mutually distinct in smooth almost-concordance. In \cite[Conjecture 44]{Celoria:2016jk} it is conjectured that,
when $Y\neq S^3$, within each free homotopy class in $[S^1,Y]$ there are
infinitely many distinct almost-concordance classes. For $Y=S^3$ the local action is transitive.

We prove in the first section of this paper that, besides $S^3$, there is another much less obvious case which must be excluded from such a conjecture. In the case~$Y = S^1 \times S^2$ and $x \in [S^1, Y]$ a primitive element, we determine that the set~$\mathcal{C}_x(Y)$ consists of a single class. The proof of this fact uses topological surgery theory, see Theorem~\ref{thm:topsurgery}.

\begin{theorem}[Concordance light bulb theorem]
If a knot $K$ in $S^1\times S^2$ is freely homotopic to $S^1\times\{\pt\}$,
then $K$ is concordant to $S^1\times\{\pt\}$.
\end{theorem}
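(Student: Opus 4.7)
The plan is to build an embedded concordance annulus $A \subset W := (S^1\times S^2) \times [0,1]$ with $\partial A = K \sqcup (S^1 \times \{\pt\})$ by appealing to topological $4$--manifold surgery theory, as packaged in Theorem~\ref{thm:topsurgery}. The starting point is to use the free homotopy from $K$ to $S^1 \times \{\pt\}$ to produce a generically immersed annulus $A_0 \subset W$ with the prescribed boundary; the task is then to replace $A_0$ by a locally flat embedded annulus with the same boundary.

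The key geometric input I would use is the ``light-bulb'' sphere $G := \{\pt\} \times S^2 \subset S^1 \times S^2$, pushed into an interior level of $W$. Because $K$ is freely homotopic to $S^1 \times \{\pt\}$, its winding number is $\pm 1$, so $G$ has algebraic intersection $\pm 1$ with $A_0$ and serves as a transverse sphere. Since $G$ is embedded with trivial normal bundle and is disjoint from $S^1 \times \{\pt\}$, I would use the standard tubing trick to reduce the Wall self-intersection $\mu(A_0) \in \Z[\pi_1(W)]$ to zero without changing the boundary or the homotopy class of $A_0$.

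With the algebraic obstructions trivialised, I would then invoke Theorem~\ref{thm:topsurgery}, which applies Freedman--Quinn topological surgery in the good fundamental group regime $\pi_1(W) = \Z$. The topological disk embedding theorem furnishes embedded Whitney disks pairing up the remaining double points of $A_0$, and the resulting Whitney moves produce the desired locally flat embedded annulus $A$.

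The main obstacle is this final step: producing embedded Whitney disks in dimension~$4$ requires the full strength of Freedman--Quinn topological surgery for good groups. The other pieces (constructing the immersed annulus, exhibiting the transverse sphere, and tubing off intersections algebraically) are comparatively routine, and serve mainly to verify the hypotheses of Theorem~\ref{thm:topsurgery}. An additional bookkeeping point to check carefully is that the framing obstruction on the Whitney disks vanishes; this should follow from the triviality of the normal bundle of $G$, which permits one to correct any framing discrepancy by boundary-twisting along copies of the transverse sphere.
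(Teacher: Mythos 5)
Your strategy---take an immersed annulus $A_0$ realising the free homotopy and upgrade it to an embedding using $G=\{\pt\}\times S^2$ as a dual sphere---is genuinely different from the paper's argument, but as written it has a circularity and a substantive gap. The circularity: Theorem~\ref{thm:topsurgery} \emph{is} the concordance light bulb theorem you are being asked to prove; it is not a surgery-theoretic tool you may quote. The external input your outline actually needs is the Freedman--Quinn disc/surface embedding theorem for the good group $\Z$. (For contrast, the paper never embeds an annulus directly: it forms $M_K=X_K\cup_{T^2}X_J$, shows $H_1(X_K;\Z[\Z])=0$ via the Torres condition, shows $M_K$ is framed null-bordant over $S^1$, kills the surgery obstruction in $L_4(\Z[\Z])\cong 8\Z$ by adding $E_8$ summands, surgers to a homotopy $S^1\times D^3$ with boundary $M_K$, and identifies the reglued $4$--manifold with $S^1\times S^2\times I$ using Freedman's classification of homotopy $S^1\times S^3$'s.)

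The substantive gap is the step ``reduce $\mu(A_0)$ to zero without changing the boundary or the homotopy class of $A_0$.'' This is impossible as stated: for fixed boundary, $\mu$ is unchanged by homotopy rel boundary except in the coefficient of the trivial group element (cusp moves), so the coefficients of $t^k$, $k\neq 0$, can only be altered by changing the rel-boundary homotopy class, e.g.\ by tubing $A_0$ into copies of $G$ (the Norman trick). The change so obtained is governed by the equivariant intersection number $\lambda(A_0,G)\in\Z[t^{\pm 1}]$, and winding number one only gives that its augmentation is $\pm 1$, not that $\lambda(A_0,G)=1$ equivariantly. But equivariant algebraic duality is exactly the hypothesis needed both for the surface embedding theorem with dual spheres (note the disc embedding theorem is for discs; for an annulus you need a surface version) and for the geometric Casson lemma, which you would need in order to make $G$ geometrically dual so that tubing removes double points at no cost. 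So the real work --- choosing the initial annulus/homotopy, and using the winding number one condition (equivalently $H_1(Y\sm\nu K;\Z[\Z])=0$) to arrange $\lambda(A_0,G)=1$ and $\mu(A_0)=0$ before invoking the embedding theorem --- is precisely what is missing; the steps you describe as routine do not verify the hypotheses of any theorem you are entitled to use.
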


So the following adjusted conjecture is a good starting point for studying almost-concordance, and is the central focus of this paper.

\begin{conjecture}\label{conj:main}
Fix a closed 3--manifold $Y$ and a free homotopy class $x\in [S^1,Y]$. Unless
$Y=S^3$, or $(Y,x)=(S^1\times S^2,[S^1\times\{\pt\}])$, there are infinitely
many distinct almost-concordance classes within the set $\mathcal{C}_x(Y)$.
\end{conjecture}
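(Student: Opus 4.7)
The plan is to produce infinitely many invariants of almost-concordance by two complementary routes: equivariant linking numbers in covering spaces, and twisted Reidemeister torsion of knot exteriors. Recall that the local $\mathcal{C}$--action is realised by connected sum inside a small $3$--ball embedded in $Y$, so any concordance invariant that is unchanged by replacing the knot inside such a ball descends to an almost-concordance invariant; the job is to build enough such invariants in each $(Y,x)$ to produce infinite orbit sets.

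I would begin with the null-homotopic case ($x = 0$, $Y \neq S^3$), which is the part the abstract announces as settled. Let $p \colon \widetilde Y \to Y$ be the universal cover. A null-homotopic knot $K \subset Y$ lifts to a disjoint union $\widetilde K = p^{-1}(K) \subset \widetilde Y$ whose components are indexed by the deck group $G = \pi_1(Y)$; since each component bounds a surface in $\widetilde Y$, one can define equivariant pairwise linking numbers $\lambda_g(K) := \lk(\widetilde K_1, g \cdot \widetilde K_1) \in \Q$ for $g \in G \sm \{1\}$. A concordance from $K$ to $K'$ in $Y \times [0,1]$ lifts to a cobordism of disjoint annuli in $\widetilde Y \times [0,1]$, so each $\lambda_g$ is a concordance invariant. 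Crucially, since a local connected sum with $J \subset S^3$ is supported in a ball $B \subset Y$, the preimage $p^{-1}(B)$ is a disjoint union of balls meeting $\widetilde K$ in single arcs only, hence $\lambda_g$ is unchanged. Fixing $g \in G \sm \{1\}$ represented by a simple loop $\gamma$, one builds a family $K_n$ of null-homotopic knots with $\lambda_g(K_n) = n$ by clasping $n$ pairs of arcs of a local unknot along a band running around $\gamma$, producing infinitely many distinct almost-concordance classes in $\mathcal{C}_0(Y)$.

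For the remaining homotopy classes the cover no longer partitions into separate copies of $K$, and I would turn to twisted Reidemeister torsion of $E_K := Y \sm \nu(K)$. Given a representation $\rho \colon \pi_1(E_K) \to \mathrm{GL}_n(\F)$ sending meridians of a local $S^3$--summand to the identity (for instance, by arranging $\rho$ to factor through a quotient of $\pi_1(Y)$ on that summand's contribution), the torsion $\tau(E_K, \rho)$ is a concordance invariant whose connected-sum formula gives a trivial contribution from the local $J \subset S^3$. To realise infinitely many values, one would fix a knot $K_0$ representing $x$ and form satellites of $K_0$ by an infinite family of patterns inside a solid torus, arranging that the twisted Alexander polynomials produce distinct elements in $\F^{\times}$.

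The main obstacle lies in the construction step for essential classes: one must verify that the chosen satellite family remains mutually distinct modulo arbitrary local summands, which requires a good enough understanding of $\tau$ in each $(Y,x)$; and any general argument must \emph{avoid}, rather than merely fail to contradict, the exceptional pair $(S^1 \times S^2, [S^1 \times \{\pt\}])$ ruled out by Theorem~\ref{thm:topsurgery}. This suggests any uniform proof will need to be sensitive to whether $x$ lies in the centre of $\pi_1(Y)$, to the JSJ decomposition of $Y$, and to the existence of enough representations of $\pi_1(E_K)$ supporting nontrivial torsion. A finite-cover refinement of the covering-link method, in which one chooses a cover compatible with the order of $x$ rather than the universal cover, seems the most promising way to unify the two approaches and to handle homotopy classes where the universal covering link degenerates to a single-component knot.
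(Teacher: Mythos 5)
The statement you are trying to prove is labelled a \emph{conjecture} in the paper, and it is genuinely open: the paper itself only establishes special cases (classes with $[x]=2u$ for $u$ primitive of infinite order, via twisted Reidemeister torsion; torsion classes, in particular the null-homotopic class, via covering links; and classes represented by separating curves on non-separating surfaces). Your text is likewise a program rather than a proof: for every non-torsion class you say one ``would'' form satellites and ``arrange'' that twisted Alexander polynomials distinguish them, and you explicitly flag that this step is unverified. That is exactly where the difficulty lives. The paper's Theorem~\ref{thm:NonTorsionCase} needs the Mazur patterns, an explicit C--complex computation of $\Delta_{L(P_n)}(s,t)$, an irreducibility/non-associate argument for $\Delta_{P_p}(-1,u^2)$, and crucially the hypothesis $[x]=2u$ so that a sign-twisted ($\Z/2$--valued times free-abelian) representation can be chosen with the meridian going to $\pm 1$; even then the cases $[x]=nu$ with $n\geq 3$, and primitive classes other than the light bulb case, are left open. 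So no amount of polishing of your sketch as written yields the conjecture.

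There are also concrete technical gaps in the two ingredients you do describe. In the null-homotopic case you work in the universal cover: for general $Y$ this cover is non-compact with infinite deck group, and the well-definedness of your $\lambda_g$, the order-two case $g=g^{-1}$ (where lifts link $2n$ rather than $n$ times), and the bookkeeping of which pairs of components link all need care; the paper instead passes to a \emph{finite} cover, whose existence for $\pi_1(Y)/\langle\langle x\rangle\rangle$ nontrivial rests on Prime Decomposition, Geometrisation and residual finiteness of $3$--manifold groups --- a nontrivial input your argument silently assumes away. For the torsion ingredient, the bare claim that $\tau(E_K,\rho)$ is a concordance invariant with trivial local contribution is false without the paper's careful setup: one must check the representation extends over the concordance exterior, quotient by the norm subgroup $N(F)$ using self-dual bases and duality of the cobordism, and control the local effect, which is multiplication by $\Delta_J$ evaluated at the image of the meridian; this lies in $\Q^\times\subset N(F)$ only because the meridian is \emph{torsion} in $H_1(Y\sm\nu K;\Z)$ when $[K]$ has infinite order (Proposition~\ref{Prop:CurveDuality}) and the character is $\pm 1$--valued. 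Simply ``sending meridians of a local summand to the identity'' does not achieve this and risks killing the very nontriviality you need. In short: your proposal reproduces the paper's two techniques in outline, but the conjecture itself is not proved by them, and several of your intermediate claims require substantial additional hypotheses or arguments to be correct.
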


In this paper we have two main sets of results towards proving Conjecture \ref{conj:main}. The results employ very different techniques and are each effective under different circumstances.

As our first main result, we prove the following statement in Theorem~\ref{thm:NonTorsionCase}, which generates many infinite families of examples confirming Conjecture \ref{conj:main}.

\begin{theorem}
Let $Y$ be a closed, orientable $3$--manifold and $x \in [S^1, Y]$ a free homotopy class. Denote its homology
class with $[x] \in H_1(Y; \Z)$. If $[x] = 2u$ for a primitive class~$u$ of
infinite order, then there are infinitely many distinct almost-concordance
classes within the set $\mathcal{C}_x(Y)$.
\end{theorem}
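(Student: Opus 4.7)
The hypothesis supplies a cohomology class $\phi\in H^1(Y;\Z)$ with $\phi(u)=1$, hence $\phi([x])=2$. My plan is to build an almost-concordance obstruction from the Reidemeister torsion of $Y\setminus K$ twisted by $\alpha\colon \pi_1(Y\setminus K)\to\pi_1(Y)\xrightarrow{\phi}\Z$, and then to realise infinitely many distinct values by satellite constructions. The key arithmetic features of $\alpha$ are that it sends the meridian $\mu$ of $K$ to $0\in\Z$ (because $\mu$ bounds a disk in $Y$) while sending the longitude to $\phi([K])=2\neq 0$; the latter forces acyclicity of $C_*(Y\setminus K;\Q(t))$, so I set $\tau(Y,K)$ equal to its Reidemeister torsion, taken in the quotient group $Q$ of $\Q(t)^\times$ by units and by the subgroup of norms $\{p(t)\overline{p(t)}\}$, where $\overline{\cdot}$ denotes the involution $t\mapsto t^{-1}$.

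Two invariance statements drive the proof. For concordance invariance, a concordance $C\subset Y\times I$ gives a $4$-manifold exterior to which $\alpha$ extends (via $\pi_1(Y\times I)=\pi_1(Y)$), and a linking-form / half-lives-half-dies argument adapted from classical knot concordance yields $\tau(Y,K_0)\equiv\tau(Y,K_1)$ in $Q$. For invariance under the local $\mathcal{C}$-action the vanishing $\alpha(\mu)=0$ is crucial: a van Kampen decomposition $\pi_1(Y\setminus(K\# J))=\pi_1(Y\setminus K)\ast_{\langle\mu\rangle}\pi_1(S^3\setminus J)$ shows that $\alpha$ restricts trivially to the $\pi_1(S^3\setminus J)$-factor, and a Mayer--Vietoris torsion computation together with $\Delta_J(1)=\pm 1$ gives $\tau(Y,K\# J)=\tau(Y,K)$ in $Q$.

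To realise infinitely many classes, fix $K_0\in\mathcal{C}_x(Y)$ together with an embedded curve $\eta\subset Y\setminus K_0$ satisfying $\phi([\eta])=1$. For each knot $J\subset S^3$, let $K_J$ be the infection of $K_0$ along $\eta$ by $J$. A Fox calculus computation gives $\tau(Y,K_J)\equiv \tau(Y,K_0)\cdot \Delta_J(t)$ in $Q$, and since there exist infinitely many $J$ whose Alexander polynomials $\Delta_J(t)$ represent pairwise inequivalent classes modulo the norm subgroup, the knots $K_J$ realise infinitely many distinct almost-concordance classes in $\mathcal{C}_x(Y)$.

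The principal obstacle is verifying that $\tau$, with its prescribed indeterminacy, is a genuine concordance invariant in the generality of arbitrary $3$-manifolds $Y$: this demands a careful formulation of the Blanchfield pairing for the pair $(Y,K)$ and an identification of the concordance ambiguity with the norm subgroup. The hypothesis that $u$ is \emph{primitive} and of \emph{infinite order} is precisely what guarantees that $\alpha$ takes values in $\Z$ rather than a finite quotient, and that the twisted Alexander module associated to $(Y,K)$ is rich enough to separate the family $\{K_J\}$.
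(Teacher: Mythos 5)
Your high-level strategy (twisted Reidemeister torsion plus a satellite construction) is the same as the paper's, but the two technical choices you make break down exactly where the real difficulty lies, and the hypothesis $[x]=2u$ is never actually used in your argument --- a warning sign, since the case $[x]$ primitive must be excluded (concordance light bulb theorem for $S^1\times S^2$). The fatal step is the realization of infinitely many classes. An ``infection of $K_0$ along $\eta$ by $J$'' only makes sense inside $Y$ if $\eta$ bounds an embedded disc in $Y$ (so that removing $\nu\eta$ and regluing the exterior of $J$ returns the ambient manifold $Y$); but then $\eta$ is null-homotopic in $Y$, so $\phi([\eta])=0$, not $1$. Conversely, if $\phi([\eta])=1$ then $\eta$ is essential and the splice $(Y\setminus\nu\eta)\cup(S^3\setminus\nu J)$ is in general a different $3$--manifold, so the knots $K_J$ do not live in $\mathcal{C}_x(Y)$ at all. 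Worse, this is not fixable within your purely abelian twist $\alpha\colon\pi_1(Y\setminus K)\to\Z$: because $[x]$ has infinite order, the meridian $\mu$ of $K$ is \emph{torsion} in $H_1(Y\setminus\nu K;\Z)$ (Proposition~\ref{Prop:CurveDuality}), so $\alpha(\mu)=0$, and any modification of $K$ that is guaranteed to stay in $Y$ and in the class $x$ --- namely a winding number one satellite $P(K)$ performed inside a tubular neighbourhood of $K$, or an infection along a curve that is null-homotopic in $Y$ --- changes the torsion only by $\Delta_{L(P)}(1,h(t))$, and by the Torres condition $\Delta_{L(P)}(1,t)\doteq\Delta_{\mathrm{unknot}}(t)\doteq 1$. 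So your invariant is constant on every family you could legitimately construct.

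This is precisely why the paper twists by $\alpha\otimes\rho$ where $\rho\colon H_1(Y\setminus\nu K;\Z/2\Z)\to\{\pm1\}$ is a \emph{meridional} sign character: its existence is exactly what the hypothesis $[x]=2u$ buys (Proposition~\ref{Prop:CurveDuality}(2) with $p=2$), and then a winding number one (Mazur) pattern contributes the factor $\Delta_{P_n}(-1,u^2)$, which is shown to be irreducible and pairwise non-associate, and is detected by a parity homomorphism on $\Q(F)^\times/N(F)$. Two further points in your write-up need repair even for the invariance statements: $C_*(Y\setminus\nu K;\Q(t))$ is \emph{not} acyclic in general (e.g.\ if $b_1(Y)\geq 2$, or if $Y$ has an $S^1\times S^2$ summand on which $\phi$ vanishes, $H_2$ with $\Q(t)$--coefficients is nonzero), so one must work with self-dual bases and the quotient by the norm subgroup $N(F)$ rather than plain acyclic torsion; and since a concordance only identifies the relevant characters of the two knot groups abstractly, the honest almost-concordance invariant is the whole set $\{\tau_\rho(K)\}$ over meridional characters $\rho$, not a single torsion value.
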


We expect that one could push the methods of this paper further, using suitably cunning calculations, to deal with the case that $[x]=nu$  for a primitive class~$u$ of
infinite order and any $n\geq 2$. The case $n=1$ must be excluded as a consequence of Theorem \ref{thm:topsurgery}.

The theorem is proved using twisted Reidemeister torsion combined with a satellite construction.
The actual topological almost-concordance
invariants we obtain are rather technical to state, so we leave their precise
formulation to the body of the paper in Corollary~\ref{cor:TorsionInvariant}.
The principle is roughly as follows.
The local action of $\mathcal{C}$ can only affect the twisted
Reidemeister torsion of a knot $K$ in a 3--manifold by multiplication with the
Alexander polynomial of a knot in $S^3$, where the variable of the polynomial
corresponds to the meridian of the knot $K\subset Y$. If we can modify $K$ in
$Y$ so that we introduce more drastic changes to the twisted torsion, but do
not change the free homotopy class of the knot, then we can potentially change the
almost-concordance class in a detectable way. The technical difficulty we have
overcome is in choosing free homotopy classes of knots, and a coefficient system
for the homology, so that the twisted Reidemeister torsion is both non-trivial
and a concordance invariant.

Our second main technique for distinguishing almost-concordance classes comes from analysing linking numbers of covering links. In Section \ref{null-homotopic-case} we give a self-contained and elementary proof of the following.

\begin{proposition}
Let $Y$ be a spherical space form and let $x$ be the null-homotopic free homotopy class.  Then if $Y\neq S^3$, the set $\mathcal{C}_x(Y)$ contains infinitely many almost-concordance classes.
\end{proposition}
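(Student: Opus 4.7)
The plan is to exploit that the universal cover of a spherical space form $Y=S^3/\Gamma$ is itself $S^3$, and to extract almost-concordance invariants from covering links. Given a null-homotopic knot $K\subset Y$ with $\pi\co S^3\to Y$ the covering projection, $K$ lifts to a closed loop $\widetilde{K}_0\subset S^3$, and its total preimage is the $|\Gamma|$-component link $\widetilde{K}=\bigsqcup_{g\in\Gamma}g\cdot\widetilde{K}_0\subset S^3$. Any concordance annulus $A\subset Y\times[0,1]$ between two null-homotopic knots is itself null-homotopic (its free homotopy class in $[S^1,Y\times[0,1]]=[S^1,Y]$ coincides with that of $K_0$), so $A$ lifts to $|\Gamma|$ disjoint annuli in $S^3\times[0,1]$ that exhibit a concordance of the preimage links in $S^3$.

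The proposed almost-concordance invariant is the multiset
\[\mathcal{L}(K):=\{\lk(g\cdot\widetilde{K}_0,\,h\cdot\widetilde{K}_0):g,h\in\Gamma,\ g\neq h\}\]
of pairwise linking numbers in $S^3$. Replacing $\widetilde{K}_0$ by $k\cdot\widetilde{K}_0$ merely permutes the indexing pairs by conjugation, so $\mathcal{L}(K)$ is well-defined. It is a concordance invariant because pairwise linking numbers of links in $S^3$ are concordance invariants. To check invariance under the local action, observe that the modification $K\mapsto J\# K$ is performed inside a small ball $B\subset Y$ whose preimage is $|\Gamma|$ disjoint balls in $S^3$; the component $g\cdot\widetilde{K}_0$ is altered only inside one of those balls, by a local connect-sum with $g\cdot J$, and this leaves every pairwise linking number between components unchanged.

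To complete the proof it then suffices to construct a family $\{K_m\}_{m\in\N}$ of null-homotopic knots in $Y$ realising infinitely many distinct values of $\mathcal{L}$. Since $\Gamma\neq\{1\}$, choose $g\in\Gamma\setminus\{1\}$, a point $x_0\in S^3$, and an embedded arc $\gamma\subset S^3$ from $x_0$ to $g\cdot x_0$. I would take $\widetilde{K}_0(m)\subset S^3$ to be a small loop near $x_0$ equipped with a thin tendril running along $\gamma$ toward $g\cdot x_0$, shaped so that it clasps $m$ times with the corresponding tendril on the translate $g\cdot\widetilde{K}_0(m)$. Because $\Gamma$ acts freely and is finite, a sufficiently thin tendril can be kept disjoint from every other translate $h\cdot\widetilde{K}_0(m)$ with $h\in\Gamma\setminus\{1\}$. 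The projection $K_m:=\pi(\widetilde{K}_0(m))$ is then a well-defined embedded null-homotopic knot in $Y$ with $\lk(\widetilde{K}_0(m),\,g\cdot\widetilde{K}_0(m))=m$, so $m\in\mathcal{L}(K_m)$ and the values $\mathcal{L}(K_m)$ are pairwise distinct.

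The main obstacle is carrying out this equivariant clasping construction cleanly. One must verify, using only the freeness of the $\Gamma$-action on $S^3$, that the tendril along $\gamma$ can be arranged to clasp $g\cdot\widetilde{K}_0(m)$ exactly $m$ times while remaining both disjoint from and algebraically unlinked with every other translate $h\cdot\widetilde{K}_0(m)$. This is geometric book-keeping rather than a deep difficulty, consistent with the paper's promise of a self-contained and elementary proof.
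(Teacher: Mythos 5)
Your proposal is correct and is essentially the paper's own argument: lift to the universal cover $S^3$, use the multiset of pairwise linking numbers of the covering link (unchanged by local knotting since the connect-sum ball lifts to disjoint balls, and preserved because a concordance of null-homotopic knots lifts to a concordance of the covering links), and realise arbitrarily large linking numbers via clasps/twists along a nontrivial deck transformation $g$. The equivariant clasping you defer as ``book-keeping'' is handled in the paper simply by drawing the pattern downstairs --- a knot $K_n$ with an $n$--twist box inside a solid torus neighbourhood of an embedded circle representing $g$ --- so that the preimage is automatically an embedded equivariant link whose linking numbers are computed from a lifted disc (with the minor caveat, worth noting in your construction too, that when $g$ has order two the relevant linking number is $2n$ rather than $n$, which does not affect the conclusion).
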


The proof exhibits knots that lift to links in $S^3$ with different pairwise linking numbers. We observe that almost-concordant knots lift to links for which the respective sets of pairwise linking numbers coincide, and the result follows. We remark that the examples exhibited by Celoria \cite{Celoria:2016jk} also lift to links in $S^3$ with different pairwise linking numbers. So in particular these examples are distinct in topological almost-concordance, a fact which cannot be detected by the $\tau$ invariant.

We proceed to generalise this idea into a more systematic approach, which can be applied to any 3--manifold $Y\neq S^3$ to obtain the second main theorem of this paper.

\begin{theorem}
For any closed orientable 3--manifold $Y \neq S^3$ and $x$ the null-homotopic class there are infinitely many distinct almost-concordance classes within the set~$\mathcal{C}_x(Y)$.
\end{theorem}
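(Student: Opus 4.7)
The plan is to generalise the covering-link argument sketched for spherical space forms to an arbitrary $Y\neq S^3$. Since $Y\neq S^3$, the Poincar\'e conjecture gives $\pi_1(Y)\neq 1$, so $Y$ admits a non-trivial regular cover $p\colon \wti Y\to Y$; it is convenient to choose a finite cover, which exists by residual finiteness of $3$-manifold fundamental groups. Let $d\geq 2$ denote the degree. A representative $K\subset Y$ of the null-homotopic class lifts to a link $\wti K=\bigsqcup_{i=1}^d \wti K_i$ in $\wti Y$, each component of which is again null-homotopic and therefore null-homologous in $\wti Y$. Since the indeterminacy in a Seifert surface for $\wti K_i$ pairs trivially with the null-homologous class $[\wti K_j]$, the pairwise linking numbers $\lk_{\wti Y}(\wti K_i,\wti K_j)\in\Z$ are well-defined for $i\neq j$. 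Let $\Lambda(K)$ denote the multiset $\{\lk_{\wti Y}(\wti K_i,\wti K_j):i<j\}$.

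The first step is to show that $\Lambda$ is an almost-concordance invariant. Given a concordance $A\colon S^1\times[0,1]\hookrightarrow Y\times[0,1]$ between null-homotopic knots, the induced map $A_*\colon \pi_1(S^1\times[0,1])\to\pi_1(Y)$ kills the generator, so $A$ lifts to a $d$-component concordance $\wti A$ in $\wti Y\times[0,1]$ between $\wti K$ and $\wti K'$. Pairwise linking numbers of null-homologous components are preserved by concordance via the standard Seifert-surface-capping argument in the $4$-dimensional product, so $\Lambda(K)=\Lambda(K')$. For the local action by $J\in\mathcal{C}$, the knot $J\#K$ differs from $K$ only inside a small ball $B\subset Y$. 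The preimage $p^{-1}(B)$ is a disjoint union of $d$ balls, each meeting exactly one lifted component and introducing a local copy of $J$ there. Seifert surfaces for the $\wti K_i$ may be chosen outside these balls, so pairwise linking numbers between distinct components are unaffected, giving $\Lambda(J\#K)=\Lambda(K)$.

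The second step is to exhibit null-homotopic knots $\{K_n\}_{n\in\N}\subset Y$ for which $\Lambda(K_n)$ takes infinitely many values. For this, pick an essential simple closed curve $\gamma\subset Y$ whose class does not lie in $p_*\pi_1(\wti Y)$ (such $\gamma$ exists by the choice of the cover), fix a tubular neighborhood $N(\gamma)\cong S^1\times D^2$, and take $K_n\subset N(\gamma)$ to be a satellite with winding number zero and $n$ Whitehead-style clasps. Each $K_n$ bounds an immersed disk in $N(\gamma)$ and is therefore null-homotopic in $Y$; on the other hand, inside each preimage solid torus of $p^{-1}(N(\gamma))$, which is a cyclic cover of $N(\gamma)$ of order equal to the order of $[\gamma]$ in the deck group, the lifts of $K_n$ form a cyclic chain whose consecutive linking numbers grow with $n$. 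The main technical obstacle I foresee is choosing the pattern and the curve $\gamma$ in a uniform way valid for every $Y\neq S^3$, and explicitly computing the covering link so that at least one entry of $\Lambda(K_n)$ genuinely distinguishes infinitely many values of $n$; in particular one must control the case when the deck-group order of $[\gamma]$ equals two, where the cyclic chain degenerates to a single linking number per lifted solid torus. Once these covering-link computations are in hand, the invariance properties of $\Lambda$ established above complete the argument.
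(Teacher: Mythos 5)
Your proposal takes essentially the same route as the paper: the paper deduces this statement as a corollary of a more general theorem for torsion classes, proved exactly by (i) using Geometrisation/residual finiteness to find an epimorphism $\phi\colon\pi_1(Y)\twoheadrightarrow G$ onto a non-trivial finite group and passing to the corresponding finite cover, (ii) observing that almost-concordance of knots lifts to almost-concordance of covering links and that pairwise linking numbers of (rationally) null-homologous components are link almost-concordance invariants -- these are your Step 1 and correspond to the paper's two lemmas -- and (iii) building a family $K_n$ by inserting a winding-number-zero, $n$-twist pattern into a handlebody neighbourhood of a curve $\eta$ with $\phi(\eta)\neq 1$ (the paper uses a genus-two handlebody so as to handle an arbitrary torsion class $x$; for the null-homotopic class your solid torus around $\gamma$ suffices, exactly as in the paper's warm-up for spherical space forms).

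The one step you flag as an obstacle is indeed the crux, but it is closed by a short argument rather than an explicit computation, and it is worth seeing why your phrasing ``linking numbers grow with $n$ inside the preimage solid torus'' needs care: in $\wti Y$ the linking number is defined globally by a rational $2$--chain, so the part of a lift outside the lifted twist region also contributes. The paper's fix is to take two lifts $C,D$ related by a lifted twist box and to decompose $[D]=[D_{\mathrm{dist}}]+[D_{\mathrm{box}}]$ in $H_1(\wti Y\setminus\nu C;\Q)$, where $D_{\mathrm{box}}$ lies in a $3$--ball containing the box; then $\lk(C,D_{\mathrm{box}})=n$ is computed locally, while $\lk(C,D_{\mathrm{dist}})$ is independent of $n$, so $\lk(C,D)=\lk(C,D_{\mathrm{dist}})+n$. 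Hence one does not need the exact multiset $\Lambda(K_n)$: its maximum tends to infinity, which already yields an infinite subfamily of pairwise non-almost-concordant knots. The $2$--torsion case for the image of $\gamma$ in the deck group is handled the same way (the box contribution becomes $2n$ instead of $n$), and no uniform choice of $\gamma$ or pattern beyond $\phi(\gamma)\neq 1$ is needed. With that decomposition supplied, your argument matches the paper's proof.
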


We note that it seems likely this theorem could also be proved using Schneiderman's concordance invariant from~\cite{MR2012959}.

In fact, the covering link obstruction we develop in the final section of the paper works even more generally, in the case where $x \in [S^1, Y]$ is \emph{any} torsion class. So even stronger versions of the above statement are proven in Corollary~\ref{cor:TorstionCase} and Theorem~\ref{thm:non-separating-surface}.

\subsection{Almost-concordance and piecewise linear $I$--equivalence}

The roots of almost-concordance in the smooth category go back to the 1960s. Stallings \cite{zbMATH03218889} defined links $L_0$ and $L_1$ to be \emph{$I$--equivalent} if there exists a proper (not necessarily locally flat) embedding $A\colon \bigsqcup_mS^1\times[0,1]\hookrightarrow Y\times[0,1]$ such that $L_0=A|_{\bigsqcup_mS^1\times\{0\}}\subset Y\times\{0\}$ and $L_1=A|_{\bigsqcup_mS^1\times\{1\}}\subset Y\times\{1\}$. Observe that a concordance is then a locally flat $I$--equivalence and a smooth concordance is a smooth $I$--equivalence. The intermediate notion of a piecewise linear $I$--equivalence turns out to be highly relevant to our current discussion. Precisely, given a closed 3--manifold $Y$ it is the main result of Rolfsen \cite{zbMATH03917275} that the smooth almost-concordance class of a knot is exactly the PL $I$--equivalence class of the knot. There does not appear to be a similar interpretation of topological almost-concordance directly in terms of some kind of $I$--equivalence. For discussions of classical invariants of PL $I$--equivalence see Rolfsen \cite{zbMATH03917275} and Hillman \cite[\textsection 1.5]{MR2931688}, also compare Goldsmith \cite{MR521732}.

\subsection{Almost-concordance and homology surgery}

Cappell and Shaneson~\cite{MR0339216} developed powerful tools to analyse knot concordance via homology surgery. We now briefly discuss how the subtleties of their setup relate to almost-concordance and to our invariants.

To apply their method to study the concordance set $\mathcal{C}_x(Y)$, one must first pick a knot $K$ representing $x$ as a target knot. A knot $J\subset Y$ is then called \emph{$K$--characteristic} if there exists a degree 1 normal map (of pairs) from the exterior of $J$ to the exterior of $K$, which is the identity on the boundary. A concordance from $J$ to $J'$ is called \emph{$K$--characteristic} if there exists a degree 1 normal map (of triads) from the exterior of the concordance to the exterior of the trivial concordance from $K$ to itself, restricting to the identity on the interior boundary and restricting to degree 1 normal maps on the respective exteriors of $J$ and $J'$. Write $\Char_K(Y)$ for the set of equivalence classes of $K$--characteristic knots in $Y$ modulo $K$--characteristic concordance.

If $Y=S^3$ and $U$ is the unknot, every knot and concordance is $U$--characteristic in a natural way. Local knotting defines an action of the group $\Char_U(S^3)\cong\mathcal{C}$ on $\Char_K(Y)$, which intertwines with the action of $\mathcal{C}$ on $\mathcal{C}_x(Y)$ under the natural map $i_{K,Y}\colon \Char_K(Y)\to \mathcal{C}_x(Y)$.
Consequently, almost-concordance invariants are invariant on the orbits of the action of $\mathcal{C}$ on $\Char_K(Y)$, and could potentially also determine whether a knot is $K$--characteristic. However, the maps $i_{K,Y}$ are not known to be injective in general, so statements in the reverse direction are less clear. We observe that the question of injectivity of these maps is closely analogous to the question of whether slice boundary links in $S^3$ are moreover boundary slice.

However, in practice, historical examples of characteristic concordance obstructions turn out to be almost-concordance invariants. In particular, the invariants of Miller~\cite{MR1354382}, mentioned above, which were not originally intended as almost-concordance invariants, turn out to be so. We note that Goldsmith~\cite{MR521732} has related Milnor's invariants for classical links to linking numbers in covering links. This suggests that our covering link invariants may be closely related to Miller's knot invariants.

To place our two approaches in the context of Cappell-Shaneson's theory, our covering link invariants could also be used to obstruct a knot being characteristic, whereas our Reidemeister torsion invariants can be thought of as detecting non-triviality in a quotient of Cappell-Shaneson homology surgery obstruction groups $\Gamma_4(\Z[\Z\times\Z/2\Z]\to\Z)/\Gamma_4(\Z[\Z]\to \Z)$.

\subsection{Further questions}

As the study of PL $I$--equivalence was largely conducted before the seminal work of Freedman, and so before the current appreciation of the difference between smooth and topological concordance, the aforementioned classical PL $I$--equivalence invariants are really invariants of topological almost-concordance. This suggests the following question, which has not been classically studied (and which we do not address in this present work).

\begin{question}\label{q:cat}
Are there knots $K, K'\subset Y$ which are topologically but not smoothly almost-concordant to one another?
\end{question}

This paper is focussed on the whether the local action of Definition~\ref{def:local} is transitive, but one can ask about other properties of this action.

\begin{question}
Given a $3$--manifold $Y$ and class $x\in [S^1,Y]$, when is the local action $\mathcal{C}\times\mathcal{C}_x(Y)\to \mathcal{C}_x(Y)$ free? When is it faithful?
\end{question}

For the question of whether this action is free, compare with the action considered in a high-dimensional setting by Cappell--Shaneson~\cite[\textsection 6]{MR0339216}.

\subsection*{Acknowledgments}
We thank Duncan McCoy for helpful discussions.
We are indebted to the referee for highlighting additional connections
to prior work and for further helpful comments.
SF was supported by the SFB 1085 `Higher Invariants' at the University of Regensburg, funded by the Deutsche Forschungsgemeinschaft (DFG). SF is grateful for the hospitality received at the University of Durham, and wishes to thank Wolfgang L\"uck for supporting a long stay at the Hausdorff Institute. MN is supported by a CIRGET postdoctoral fellowship. PO was supported by the EPSRC grant EP/M000389/1 of Andrew Lobb. MP was supported by an NSERC Discovery Grant. MN, PO and MP all thank the Hausdorff Institute for Mathematics in Bonn for both support and its outstanding research environment.


\section{A case when almost-concordance is trivial}\label{sec:topsurgery}

For $Z$ a submanifold of a manifold $X$, we introduce the notation $\nu
Z\subset X$ for some open tubular neighbourhood of $Z$ in $X$.

In this section, let $Y=S^1 \times S^2$ and let $x$ be the free homotopy class
of a generator $J:= S^1 \times \{\pt\} \subset S^1 \times S^2$ of
$\pi_1(S^1 \times S^2) \cong \Z$.  Note that altering $J$ by a local knot does not alter the
isotopy class, as we can change the crossings of the local knot arbitrarily by isotopies in $S^1 \times S^2$.  We
may consider other knots in the same free homotopy class that are not isotopic to $J$, but in fact up to
concordance, we now show there is no difference.

\begin{theorem}[Concordance light bulb theorem]\label{thm:topsurgery}
  Suppose that a knot $K \subset S^1\times S^2$ lies in the free homotopy class of $x$.  Then $K$ is concordant to $J$.
\end{theorem}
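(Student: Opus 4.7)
The plan is to promote the given free homotopy between $K$ and $J$ to a proper locally flat embedding of an annulus in $Y\times[0,1]$, using topological $4$--dimensional surgery. Two features of $Y=S^1\times S^2$ are essential: the fundamental group $\pi_1(Y\times[0,1])=\Z$ is a ``good'' group in the sense of Freedman--Quinn, and the sphere $\Sigma:=\{p\}\times S^2\subset Y$ is geometrically dual to $J$, providing a supply of topological dual spheres in $Y\times[0,1]$.

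First I would realise the free homotopy as a proper map $h\co S^1\times[0,1]\to Y\times[0,1]$ restricting to $K\sqcup J$ on the boundary, and put it into general position as a generic immersion with only transverse interior double points. A preliminary regular homotopy (arranged by connect-summing with suitable immersed $2$--spheres built from $\Sigma$) yields vanishing of the algebraic self-intersection $\mu(h)\in\Z[\Z]/\langle g-g^{-1}\rangle$. To supply a framed geometric dual sphere for $h$ itself, I would first take $h$ to be level-preserving in the $[0,1]$ factor, so that $h(s,t)=(h'(s,t),t)$ for a homotopy $h'$ from $K$ to $J$. Then the embedded sphere $\Sigma\times\{1/2\}\subset Y\times[0,1]$ meets $h$ algebraically once, since the curve $h'(-,1/2)$ is freely homotopic to $J$ in $Y$ and $J\cdot\Sigma=1$. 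A standard pipe argument upgrades this to a geometric single-point intersection, and the trivial normal bundle of $\Sigma$ in $Y$ supplies a framing.

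With these hypotheses in place, I would invoke the topological surface embedding theorem of Freedman--Quinn: a properly immersed surface in a topological $4$--manifold with good fundamental group, vanishing self-intersection invariant, and a framed topological dual sphere is regularly homotopic rel boundary to a proper locally flat embedding. Applied to $h$, this produces the desired concordance from $K$ to $J$. Equivalently, one could carry out the Whitney moves by hand: pair up the double points using $\mu(h)=0$, find immersed Whitney disks, and embed them disjointly from $h$ using the topological disc embedding theorem with dual spheres built from $\Sigma$.

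The main obstacle is the construction of the framed dual sphere for $h$; this is the role of the ``light bulb'' sphere $\{p\}\times S^2$, and it is precisely the feature of $S^1\times S^2$ that forces the exclusion of the case $(S^1\times S^2,[S^1\times\{\pt\}])$ from Conjecture~\ref{conj:main}. Once the dual sphere is secured, the good-group hypothesis $\pi_1=\Z$ activates the topological Whitney moves that would be unavailable for more complicated fundamental groups.
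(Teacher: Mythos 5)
Your strategy is genuinely different from the paper's: you try to embed the immersed annulus directly with Freedman--Quinn technology, using $\{p\}\times S^2$ as a light-bulb dual sphere, whereas the paper never constructs the annulus at all -- it performs surgery on the complement, showing $M_K=X_K\cup X_J$ is framed null-bordant over $S^1$, killing the obstruction in $L_4(\Z[\Z])\cong 8\Z$ by $E_8$ summands, and then identifying the ambient $4$--manifold with $S^1\times S^2\times I$ via Freedman's classification of manifolds homotopy equivalent to $S^1\times S^3$. As written, however, your argument has two genuine gaps at exactly its load-bearing points. First, the ``surface embedding theorem'' you invoke is not available in Freedman--Quinn: their embedding theorems are for discs and spheres, and extending them to a compact surface such as an annulus whose core carries the generator of $\pi_1$ is a serious matter, not a citation. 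For such an annulus the double-point group elements are only defined up to the image of $\pi_1(S^1\times[0,1])$, so the self-intersection invariant does not live in $\Z[\Z]/\langle g-g^{-1}\rangle$ as you assert (that is the home for spheres); one must identify the correct quotient, manufacture framed dual spheres for the Whitney discs, and check the hypotheses of the disc embedding theorem in the complement -- the general-surface statement involves Kervaire--Milnor-type subtleties that were only worked out carefully well after Freedman--Quinn.

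Second, the ``standard pipe argument'' upgrading $\Sigma\times\{1/2\}$ from an algebraic to a geometric dual of $h$ is not standard and is not justified. Norman's trick for removing excess intersection points of $h$ with the sphere $\Sigma\times\{1/2\}$ requires a geometric dual for that sphere, and none exists: its homological dual in $S^1\times S^2$ is the circle class, and every class in $\pi_2$ has zero intersection with it. Tubing $h$ to itself along arcs in $\Sigma\times\{1/2\}$ removes intersection pairs but adds genus, so the result is no longer an annulus; summing $h$ with parallel copies of $\Sigma$ preserves the annulus but changes the relative homotopy class by multiples of $[\Sigma]$ and creates new double points of $h$ whose effect on the self-intersection invariant you would then have to control. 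These are precisely the places where the real work of a light-bulb-style proof lies, and they are waved away here; either supply that analysis (in the spirit of later surface-embedding and topological light bulb results) or follow the paper's route, which avoids the annulus entirely by doing surgery on the exterior.
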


\begin{proof}
  Let $X_K := Y \sm \nu K$ and let $X_J := Y \sm \nu J$, noting that $X_J\cong S^1 \times D^2$.
  Let $M_K:= X_K \cup X_J$, joined along the boundary tori with meridian mapping to meridian and longitude mapping to longitude.  Note that if $K$ and $J$ are concordant, then the boundary of the exterior of the concordance is $M_K$.

  \begin{claim}
    The homology $H_1(Y \sm \nu K;\Z) \cong \Z$.
  \end{claim}

The $\Z$--coefficient Mayer--Vietoris sequence for the decomposition $Y=(Y \sm \nu K) \cup_{S^1 \times S^1} \nu K$ yields:
\[H_2(Y) \to H_1(S^1 \times S^1) \to H_1(Y \sm \nu K) \oplus H_1(\nu K) \to H_1(Y) \to 0.\]
The generator of $H_2(Y) \cong \Z$ maps to the meridian of $K$ in $H_1(S^1 \times S^1)$.  The longitude maps onto $H_1(\nu K)$.  It follows that $H_1(Y\sm \nu K) \cong H_1(Y) \cong \Z$ as claimed.

  \begin{claim}
    The homology $H_1(Y \sm \nu K;\Z[\Z]) =0$.
  \end{claim}

To see this, note that we can understand $Y \sm \nu K$ as a Kirby diagram by considering a $2$--component link $L=L_1\cup L_2\subset S^3$ with linking number one, where we take $L_2$ to be unknotted and marked with a zero, and $L_1$ is defined as the knot that becomes $K$ after 0--surgery on $L_2$. Under the abelianisation $\pi_1(X_L) \to \Z$, the meridian of $L_1$ is sent to zero, while the meridian of $L_2$ is sent to a generator.  The Alexander polynomial of a 2--component link with linking number one satisfies $\Delta_L(1,t) = \Delta_{L_2}(t)$, by the Torres condition~\cite[Section~5.1]{MR2931688}.  But $L_2$ is unknotted, so $\Delta_{L_2}(t)=1$ and therefore $H_1(X_L;\Z[\Z])=0$.  Glue in the surgery solid torus to the boundary of $\nu L_2$, to obtain $Y \sm \nu K$.  This solid torus also has $H_1(S^1 \times D^2;\Z[\Z])=0$, so it follows that $H_1(Y \sm \nu K;\Z[\Z])=0$ as claimed.

\begin{claim}
  There exists a choice of framing on $M_K$ such that it is null bordant over $S^1$, in other words represents the 0 class in $\Omega_3^{fr}(S^1)$.
\end{claim}

By the Atiyah--Hirzebruch spectral sequence we have $\Omega_3^{fr}(S^1) \cong \Omega_3^{fr} \oplus \Omega_2^{fr}$. As in Davis \cite{MR2212279}, Cha--Powell \cite{MR3270170}, we can choose any framing to start, and then alter it in a neighbourhood of a point until the framing gives the zero element of $\Omega_3^{fr} \cong \Z/24\Z$. This procedure is possible because the $J$--homomorphism $\pi_3(O)\to \pi_3^S\cong \Omega^{fr}_3$ is onto.
The element in $\Omega_2^{fr} \cong \Omega_2^{spin} \cong \Z/2\Z$ represented by $M_K$ is trivial: it is not too hard to see that the surface produced by transversality is a sphere.  In $X_K$, the inverse image of this sphere is a sphere punctured by the knot $K$, potentially in several places.  Each of the punctures is bounded by a meridian of $K$, which is identified with a meridian of $J$.  But then a meridian of $J$ bounds an embedded disc in $X_J$.
 This completes the proof of the claim.

Now follow the standard procedure from Freedman--Quinn \cite{MR1201584}, Hillman \cite[\textsection 7.6]{MR2931688}, Davis \cite{MR2212279}.  The fact that $M_K$ is framed null bordant gives rise to a degree one normal map $W \to S^1 \times D^3$ which is a $\Z[\Z]$--homology equivalence on the boundary. The surgery obstruction to changing $W \to S^1 \times D^3$ into a homotopy equivalence lies in $L_4(\Z[\Z])$. We have \[L_4(\Z[\Z]) \cong L_4(\Z) \oplus L_3(\Z) \cong L_4(\Z) \cong L_0(\Z) \cong 8 \Z\]so that the surgery obstruction may be calculated as the signature of $W$. Hence we can kill the obstruction by taking $W$ connected sum with the $E_8$ manifold, with appropriate orientations, sufficiently many times.
As $\Z$ is a `good' group (in the sense of \cite{MR1201584}), we may now do surgery on our normal map to get a homotopy equivalence $W' \to S^1 \times D^3$, where $\partial W'$ is still $M_K$.

Glue in $S^1 \times D^2 \times D^1$ to part of the boundary, namely a thickening $S^1 \times S^1 \times D^1$ in $M_K=X_K\cup_{S^1\times S^1}X_J$ of the gluing torus $S^1 \times S^1$, to obtain a concordance from $K$ to $J$ in a $4$--manifold $V = W' \cup_{S^1 \times S^1 \times D^1} S^1 \times D^2 \times D^1$. Note that $V$ and $S^1 \times S^2 \times I$ have the same fundamental group and the same homology over $\Z[\Z]$, so by the Hurewicz Theorem they have the same homotopy groups.

\begin{claim}
 The $4$--manifold $V$ is homeomorphic to $S^1 \times S^2 \times I$.
\end{claim}

Cap off $V$ on the top and bottom boundaries with copies of $S^1 \times D^3$.  This creates a $4$--manifold $Z$ that has the same homotopy groups as $S^1 \times S^3$.  Then $Z$ is homeomorphic to $S^1 \times S^3$ \cite[Theorem 10.7A]{MR1201584}.
Now remove the images of the two caps $S^1 \times D^3$ in $S^1 \times S^3$.  These are isotopic to standard embeddings, so the outcome is $S^1 \times S^2 \times I$ as claimed.

This means that the concordance of $K$ to $J$ in $V$ is in fact a concordance of $K$ to $J$ in $S^1 \times S^2 \times I = Y \times I$ as required, which completes the proof of the theorem.
\end{proof}

\begin{corollary}
  If $x$ is the free homotopy class of $S^1\times\{\pt\}$ then the set $\mathcal{C}_x(S^1\times S^2)$ contains exactly one almost-concordance class.
\end{corollary}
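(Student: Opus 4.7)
The corollary is essentially an immediate unpacking of Theorem~\ref{thm:topsurgery}, so the plan is very short. The idea is to observe that the theorem already settles the concordance classification within this free homotopy class, and an almost-concordance class is by definition a $\mathcal{C}$-orbit of concordance classes in $\mathcal{C}_x(Y)$, so if $\mathcal{C}_x(Y)$ has a single element then it automatically consists of a single almost-concordance class.

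Concretely, I would proceed as follows. First, by Theorem~\ref{thm:topsurgery}, any knot $K \subset S^1\times S^2$ lying in the free homotopy class $x = [S^1\times\{\pt\}]$ is concordant to $J = S^1\times\{\pt\}$. Hence every element of $\mathcal{C}_x(S^1\times S^2)$ equals $[J]$, so $\mathcal{C}_x(S^1\times S^2)$ is a singleton. Second, recall from Definition~\ref{def:local} that the almost-concordance classes in $\mathcal{C}_x(S^1\times S^2)$ are the orbits of the local action of $\mathcal{C}$ on this set; since the underlying set has only one element, the orbit decomposition has only one block, namely $\{[J]\}$ itself, and so there is exactly one almost-concordance class.

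There is no real obstacle here: the whole content is contained in Theorem~\ref{thm:topsurgery}, and the corollary merely translates ``every knot is concordant to $J$'' into the statement ``there is a unique almost-concordance class.'' The one mild point worth noting explicitly is that since concordance is a finer equivalence relation than almost-concordance (the trivial element $[U] \in \mathcal{C}$ of the local action gives ordinary concordance), triviality of $\mathcal{C}_x$ immediately implies triviality of the orbit set, so there is nothing further to verify.
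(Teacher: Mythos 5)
Your proposal is correct and matches the paper's (implicit) argument: the corollary is stated as an immediate consequence of Theorem~\ref{thm:topsurgery}, since the theorem makes $\mathcal{C}_x(S^1\times S^2)$ a singleton and hence the orbit set of the local action has a single element. Nothing further is needed.
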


\section{Almost-concordance and satellites}

We will recast almost-concordance as the orbit relation of a satellite action. The
construction of satellite knots in $S^3$ can be described in many equivalent
ways -- here is a generalisation for one of these satellite constructions to
knots in a general 3--manifold.
A \emph{knot framing}~$\psi$ of $K$ is an embedding~$\psi \colon S^1 \times D^2 \to Y$
such that $\psi(S^1 \times \{0\})$ is $K$.
Associated to a framing is the longitude~$\lambda_\psi = \psi(S^1 \times \{\pt\})$.

A framed knot~$(P, \psi)$ with $P \subset \interior(S^1\times D^2)$ is called a \emph{framed pattern}.
For a framed knot $(K, \psi_K)$ in $Y$ and a framed pattern $(P, \psi_P)$, the associated \emph{satellite knot} is
the framed knot $P(K) = \psi_K(P) \subset Y$ with framing $\psi_K \circ \psi_P$.
The set $\mathcal{P}$ of framed patterns with the operation~$P\cdot Q:= P(Q)$,
which is called \emph{satellite action}, is a monoid. This monoid~$\mathcal{P}$
acts on the set of framed knots~$\operatorname{FrKnots}(Y)$ via
\[\mathcal{P}\times \operatorname{FrKnots}(Y)\to \operatorname{FrKnots}(Y);\qquad (P,K)\mapsto P(K).\]
As before, we denote the set of framed knots in the homotopy class~$x\in [S^1, Y]$
by $\operatorname{FrKnots}_x(Y)$.

The \emph{winding number} of a pattern is the unique $n\in \Z$ such that the
knot represents $n$ times the positive generator of $H_1(S^1\times D^2;\Z)$.
Patterns with winding number~$1$ form a submonoid of $\mathcal{P}$.
Suppose that $P$ is such a pattern. Then $P(K)$ is always freely homotopic to $K$.
This follows from the observation that, for patterns with winding number~$1$,
$P$ is a generator of $\pi_1(S^1\times D^2)$.

Given any knot $J$ in $S^3$, form a pattern $P_J\subset\interior(S^1\times D^2)$ by removing from $S^3$ a small open tubular neighbourhood of the meridian to $J$. (We identify the exterior $S^3\sm \nu U$ of any unknot $U\subset S^3$ with $S^1\times D^2$ by mapping the meridian of $U$ to $S^1\times \{\pt\}$ and a 0--framed longitude of $U$ to $\{\pt\}\times \partial D^2$.) The pattern $P_J$ is taken to be canonically framed using the $0$--framing of $J$ in $S^3$.  A pattern $P$ obtained in this way has winding number~$1$.

\begin{proposition}\label{prop:PatternToSum}
Let $Y$ be a 3--manifold and fix $x\in [S^1, Y]$.
Then the underlying unframed knot of $P_J(K)$ is $K\# J$ and is thus independent of the framing of $K$.
Moreover, the group action
\[\begin{array}{rcl}
\mathcal{C}\times\mathcal{C}_x(Y)&\to& \mathcal{C}_x(Y)\\
(J, K) &\mapsto& P_J(K).
\end{array}\]
is well-defined and agrees with the local action of Definition~\ref{def:local}.
\end{proposition}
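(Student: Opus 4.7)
The plan is to prove the three assertions in turn: first, that $P_J(K)$ realises the connected sum $K \# J$ as an unframed knot in $Y$; second, that the resulting map descends to a well-defined action on concordance classes taking values in $\mathcal{C}_x(Y)$; and third, that this action coincides with the one in Definition~\ref{def:local}.

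For the first and main step, I would analyse the geometric structure of the pattern $P_J \subset S^1 \times D^2$ obtained from $(S^3, J)$. Choose a small 3--ball $B \subset S^3$ such that $B \cap J$ is a knotted arc representing $J$ while $J \sm B$ is a trivial unknotted arc, and arrange the meridian $\mu$ of $J$ to lie in the complement of $B$, near the trivial part of $J$. Under the identification $S^3 \sm \nu \mu \cong S^1 \times D^2$ fixed in the paper, the core $S^1 \times \{0\}$ represents the generator of $\pi_1$ that links $\mu$ once; since $J$ also links $\mu$ once, an isotopy inside $V := S^1 \times D^2$ carries $J$ to a knot that agrees with a sub-arc of the core outside $B$ and realises the ball arc of $J$ inside $B$. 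In other words, $P_J$ is isotopic in $V$ to the ``connect-sum pattern'' obtained by tying $J$ into the core of $V$ inside a small 3--ball. Embedding $V$ into $Y$ via $\psi_K$ then sends the core to $K$ and sends $B$ to a small 3--ball in $Y$ meeting $K$ in a trivial arc, so that $\psi_K(P_J)$ is precisely the connected sum $K \# J$. Independence from the choice of framing $\psi_K$ is automatic because connected sum does not depend on the framing of the separating 3--ball used: altering $\psi_K$ by a meridional twist merely rotates the image of $B$ and so does not change the knot type inside.

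For the well-definedness on concordance classes, I would first note that $P_J$ has winding number $\lk(J,\mu) = 1$, and hence the earlier observation of this section applies to show $P_J(K)$ is freely homotopic to $K$ and so lies in $\mathcal{C}_x(Y)$. To verify concordance invariance, given concordances $K_0 \sim K_1$ in $Y \times I$ and $J_0 \sim J_1$ in $S^3 \times I$, I would take their boundary-connected sum along suitably chosen arcs to produce a concordance $K_0 \# J_0 \sim K_1 \# J_1$ in $Y \times I$. This is the standard concordance invariance of connected sum. Combined with the identification $P_J(K) = K \# J$ from the first step, this shows that $(J,K) \mapsto P_J(K)$ descends to a well-defined action $\mathcal{C} \times \mathcal{C}_x(Y) \to \mathcal{C}_x(Y)$.

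Finally, since $K \# J = J \# K$ as unframed knots in $Y$, the action agrees with the local action $([J],[K]) \mapsto [J \# K]$ of Definition~\ref{def:local}. The main obstacle is the first step: justifying cleanly that $P_J$ is isotopic in $V$ to the connect-sum pattern while keeping careful track of the identification of the solid torus complements. The remaining assertions follow from standard concordance techniques once this geometric identification is in hand.
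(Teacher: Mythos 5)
Your proposal is correct and follows essentially the same route as the paper: both arguments confine the knotting of $P_J$ to a small $3$--ball inside the solid torus (the paper via the disc-knot decomposition $J=\Delta_J\cup\Delta_U$, giving $P_J=P_U\# J$; you via an isotopy of $J$ onto the core with a local knot), so that $P_J(K)=K\# J$ independently of the framing, after which concordance invariance and agreement with the local action of Definition~\ref{def:local} are routine. Your extra paragraph spelling out concordance invariance of connected sum is detail the paper leaves implicit, not a different method.
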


\begin{proof}
It is enough to show that for $J$ in $S^3$ and $K$ in $Y$, with any framing on
$K$, we have $P_J(K)= K\# J$. The \emph{disc knot} of $J$ is
$\Delta_J\colon D^1\hookrightarrow D^3$, obtained as the knotted embedding of $D^1$ in the
exterior of small open 3--ball around a point $\pt\in J\subset S^3$. As such
$J=\Delta_J\cup\Delta_U$ for $U$ the unknot in $S^3$. But now it is clear that
the knotted part of $P_J$ can be forced into a small ball
$D^3\subset S^1\times D^2$ and hence $P_J=P_U\# J$. So regardless of the framing of $K$, the
construction of $P_J(K)$ yields $K\# J$.
\end{proof}

For the convenience of the reader, we recall the following lemma.
\begin{lemma}\label{lem:hbordism}
Let $L \subset S^3$ be a $2$--component link with linking number~$1$. Then
for each boundary component~$T$ of $S^3 \sm \nu L$, and for all $k \in \Z$, the induced map
\[ H_k(T;\Z) \xrightarrow{\cong} H_k(S^3 \sm \nu L; \Z) \]
is an isomorphism. Consequently, $S^3 \sm \nu L$ is a homology bordism.
\end{lemma}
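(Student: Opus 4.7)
The plan is to check directly that for each boundary torus $T_i \subset \partial X_L$, with $X_L := S^3 \setminus \nu L$, the inclusion $T_i \hookrightarrow X_L$ induces an isomorphism on integral homology in each degree; the ``homology bordism'' conclusion will then be formal.

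The target homology is easy to pin down: by Alexander duality, $H_0(X_L) \cong \Z$, $H_1(X_L) \cong \Z^2$ generated by the meridians $\mu_1, \mu_2$ of the two components, $H_2(X_L) \cong \Z$, and higher homology vanishes. The boundary side is standard with $H_1(T_i) = \Z\langle \mu_i, \lambda_i\rangle$ and $H_2(T_i) = \Z\langle [T_i]\rangle$. Degree zero is immediate. For degree one, $\mu_i \mapsto \mu_i$ by construction, and the key point is to show that $\lambda_i \mapsto \mu_j$ where $\{i,j\}=\{1,2\}$. I would do this by choosing a Seifert surface $\Sigma$ for $L_i$ in $S^3$: since $\lk(L_i, L_j) = 1$, after a small perturbation $\Sigma$ meets $L_j$ transversely with algebraic intersection $1$, and puncturing $\Sigma$ at those intersection points yields a $2$--chain in $X_L$ whose boundary realises $\lambda_i - \mu_j = 0$ in $H_1(X_L)$. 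Thus the matrix of $H_1(T_i) \to H_1(X_L)$ in the bases $(\mu_i,\lambda_i)$ and $(\mu_1,\mu_2)$ is a permutation matrix, and the map is an isomorphism.

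For degree two I would leverage the degree-one result using Poincar\'e--Lefschetz duality together with the long exact sequence of $(X_L, T_i)$. Writing $T_j$ for the opposite boundary, duality gives $H_3(X_L, T_i) \cong H^0(X_L, T_j) = 0$ and $H_2(X_L, T_i) \cong H^1(X_L, T_j)$; the latter vanishes because the restriction map $H^1(X_L) \to H^1(T_j)$ is dual, in the torsion-free setting, to the degree-one isomorphism $H_1(T_j) \to H_1(X_L)$ already established, and hence is itself an isomorphism. The relevant segment of the LES for $(X_L, T_i)$ then collapses to $0 \to H_2(T_i) \to H_2(X_L) \to 0$, finishing the case $k=2$. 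With both boundary inclusions now known to be $\Z$--homology isomorphisms, $X_L$ is by definition a homology bordism. The only substantive step is the linking-number identification $\lambda_i = \mu_j$; everything else is diagram chasing.
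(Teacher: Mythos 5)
Your argument is correct, but it takes a different route from the paper. The paper's proof is a two-line Mayer--Vietoris argument: it glues the solid torus $\nu L_1$ back onto $S^3 \sm \nu L$ along $T_1$ to recover the single-component complement $S^3 \sm \nu L_0$ (and symmetrically with the roles of $L_0,L_1$ swapped), and reads off the claim from the Mayer--Vietoris sequences of these two decompositions, using that a knot complement has the homology of a circle and that linking number $1$ makes the core of the re-glued solid torus generate that homology. You instead compute $H_*(S^3\sm\nu L)$ directly by Alexander duality, identify the image of the longitude via a Seifert surface punctured along the other component (this is exactly where linking number $1$ enters, and it makes the geometric mechanism --- $\lambda_i \sim \mu_j$ --- explicit, which the paper leaves implicit), and settle degree two by Poincar\'e--Lefschetz duality for the triad together with the long exact sequences; all of these steps check out, including the vanishing of $H^1(X_L,T_j)$ from the surjectivity of $H^0(X_L)\to H^0(T_j)$ and injectivity of $H^1(X_L)\to H^1(T_j)$. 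One cosmetic remark: your ``permutation matrix'' claim presumes $\lambda_i$ is the $0$--framed longitude; for an arbitrary choice of longitude the matrix acquires an off-diagonal entry but remains unimodular, so nothing is affected. Overall your proof is longer and uses duality twice, but it is more self-contained about why linking number $1$ is the crucial hypothesis, whereas the paper's version is shorter and leans on the standard homology of knot complements.
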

\begin{proof}
Let $L_0$, and $L_1$ be the two components.
The claim follows from Mayer--Vietoris sequences of the
decomposition~$S^3 \sm \nu L_0 = S^3 \sm \nu L \cup \nu L_1$ and
$S^3 \sm \nu L_1 = S^3 \sm \nu L \cup \nu L_0$.
\end{proof}

Let $P \subset S^1 \times D^2$ be a framed pattern with winding number~$1$.
We say the pattern is \emph{well-framed} if the longitude $\lambda_P$ is
homologous to $S^1 \times \{\text{pt}\} \subset S^1 \times \partial D^2$
in $S^1\times D^2\sm P$.
By Lemma~\ref{lem:hbordism} the manifold $S^1 \times D^2 \sm \nu P$ is a homology bordism, so there
always exists a well-framing.

\begin{lemma}\label{lem:meridian-winding-number-1}
Let $K\in \operatorname{FrKnots}(Y)$ be a framed knot in a $3$--manifold $Y$
and $P \in \mathcal{P}$ a winding number $1$ pattern which is well-framed.
Pick tubular neighborhoods $\nu K$
and $\nu P(K)$ such that $\nu P(K) \subset \nu K$. Then the following statements hold:
\begin{enumerate}[font=\normalfont]
\item $[P(K)]=[K]\in H_1(Y;\Z)$.
\item the meridian~$\mu_K$ is homologous in $\nu K \sm \nu P(K)$ to the meridian $\mu_{P(K)}$, and
\item the longitude~$\lambda_K$ is homologous in $\nu K \sm \nu P(K)$ to the longitude~$\lambda_{P(K)}$.
\end{enumerate}
\end{lemma}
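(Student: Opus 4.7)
The plan is to transport all three statements into the pattern exterior $S^1\times D^2\sm \nu P$ via the framing homeomorphism $\psi_K\colon S^1\times D^2\to \nu K$. First I would choose $\nu P(K):=\psi_K(\nu P)$, which sits inside $\nu K$ as required, so that $\psi_K$ restricts to a homeomorphism from $S^1\times D^2\sm \nu P$ onto $\nu K\sm \nu P(K)$, carrying $\mu_P\mapsto \mu_{P(K)}$, $\lambda_P\mapsto \lambda_{P(K)}$, and the outer-torus curves $\{\pt\}\times\partial D^2$ and $S^1\times\{\pt\}$ (on $S^1\times\partial D^2$) to $\mu_K$ and $\lambda_K$ on $\partial \nu K$ respectively. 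After this reduction, each of (1)--(3) becomes a homology statement purely about curves in $S^1\times D^2\sm \nu P$.

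Statement (1) then follows immediately: the winding number hypothesis gives $[P]=[S^1\times\{0\}]$ in $H_1(S^1\times D^2;\Z)$, and pushing forward by $\psi_K$ yields $[P(K)]=[K]$ in $H_1(\nu K;\Z)$, hence in $H_1(Y;\Z)$. For (2) and (3) I would identify $S^1\times D^2$ with $S^3\sm \nu U$ for an unknot $U\subset S^3$ using the paper's convention, so that $P\cup U$ is a $2$--component link in $S^3$ with linking number equal to the winding number of $P$, namely $1$, and $S^1\times D^2\sm \nu P=S^3\sm \nu(P\cup U)$. Lemma~\ref{lem:hbordism} then tells us that the inclusion of either boundary torus induces an isomorphism on $H_1(-;\Z)\cong\Z^2$, with generators the meridians $\mu_U,\mu_P$. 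For (2) I would observe that a Seifert disc for the unknot $U$ meets $P$ in one point algebraically, which exhibits $\lambda_U\sim \mu_P$ in the exterior; applying $\psi_K$ gives $\mu_K\sim \mu_{P(K)}$. For (3) the well-framedness hypothesis is exactly the statement $\lambda_P\sim S^1\times\{\pt\}=\mu_U$ in the exterior, and applying $\psi_K$ gives $\lambda_{P(K)}\sim \lambda_K$.

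The main difficulty is entirely bookkeeping: in the paper's convention the meridian of $U$ is the curve $S^1\times\{\pt\}$ (which runs in the longitude direction of the solid torus $S^1\times D^2$), so the meridian of $K$ on $\partial \nu K$ corresponds, under $\psi_K$, to the longitude of $U$ on $\partial \nu U$, and vice versa. One must keep straight this swap together with the winding-number and linking-number identifications; once the dictionary is set up correctly, the homological content falls out with no further work.
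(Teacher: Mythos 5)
Your proposal is correct, and for parts (1) and (3) it is essentially the paper's argument: winding number one gives $[P(K)]=[K]$ already in $H_1(\nu K;\Z)$, and the longitude statement is exactly the well-framing hypothesis transported by the framing. Where you differ is in part (2). The paper stays inside $M=\nu K\sm\nu P(K)$, invokes Lemma~\ref{lem:hbordism} to get an isomorphism $\Phi\colon H_1(\partial\nu P(K);\Z)\to H_1(\partial\nu K;\Z)$, and then identifies the meridians by the algebraic characterisation of the meridian (up to sign) as a generator of the kernel of $H_1(\partial\nu\,\cdot\,)\to H_1(\nu\,\cdot\,)$, via a commutative square. You instead transport everything by $\psi_K$ to the link exterior $S^3\sm\nu(P\cup U)$ and argue geometrically: under the paper's convention $\mu_K$ corresponds to the $0$--framed longitude of the unknot $U$, which bounds a meridian disc of the solid torus, and winding number one means this disc meets $P$ algebraically once, so the punctured disc exhibits $\mu_K\sim\mu_{P(K)}$ directly. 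Your route is more elementary and explicit (it even pins down the sign, which the kernel characterisation only gives up to $\pm1$), and your reference to Lemma~\ref{lem:hbordism} is actually not needed for this step; the paper's route avoids the $S^3$ identification and the orientation bookkeeping, which you correctly flag as the only delicate point. One trivial remark: you fix $\nu P(K)=\psi_K(\nu P)$ while the lemma allows an arbitrary $\nu P(K)\subset\nu K$, but uniqueness of tubular neighbourhoods makes the homology statements independent of this choice, so nothing is lost.
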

\begin{proof}
By assumption $P$ is a winding number $1$ pattern, so $P(K)$ is homologous to
$K$ already in $\nu K$ and the first statement follows.

Consider the $3$--manifold~$M = \nu K \sm \nu P(K)$.
By Lemma~\ref{lem:hbordism}, we can compose the isomorphisms induced by the inclusions
\[H_1(\partial \nu K; \Z) \xrightarrow{\cong} H_1(M; \Z)  \xleftarrow{\cong}H_1(\partial \nu P(K); \Z)\]
and obtain an isomorphism $\Phi \colon H_1(\partial \nu P(K); \Z) \to H_1(\partial \nu K; \Z)$.

For the statement concerning meridians, we consider the diagram of maps induced by inclusions:
\[
\xymatrix{
H_1(\partial \nu K;\Z)\ar[r] & H_1(\nu K;\Z)\\
H_1(\partial \nu P(K);\Z) \ar[u]_\Phi \ar[r] & H_1( \nu P(K); \Z) \ar[u]_{\cong} \\}
\]
We see that $\Phi$ restricts to an isomorphism between the kernels of the horizontal maps.
Recall that the meridian up to a sign is characterised by the kernel of the respective horizontal map
and so $\Phi$ maps the meridian of $P(K)$ to the meridian of $K$.

The longitude of $K$ is homologous to the longitude of $P(K)$ as the pattern~$P$ is well-framed.
\end{proof}

\section{Twisted Reidemeister torsion and satellites}
To talk precisely about Reidemeister torsion, we establish some algebraic
conventions and notation.

Let $R$ be a ring $R$ with unit and an involution~$r\mapsto \overline{r}$.
One example to keep in mind is the the group ring $\Z[\pi]$ for a group~$\pi$ which
carries the involution $\Sigma_gn_gg\mapsto \Sigma_gn_gg^{-1}$.

Given a left $R$--module $A$, let
$A^t$ denote the right $R$--module defined by the action $a\cdot
r:=\overline{r}\cdot a$ for $a\in A$ and $r\in R$. Similarly, we may switch
right $R$--modules to left ones, and if $S$ is another ring with involution we
may switch $(S,R)$--bimodules to $(R,S)$--bimodules. For an $(R,S)$--bimodule
$B$ and a left $R$--module $A$, the abelian group $\Hom_R(A,B)$ has a
natural right $S$--module structure. Using the natural $(R,R)$--bimodule
structure on $R$ a left $R$--module $A$ determines a right $R$--module~$A^\vee := \Hom_R(A, R)$.
A chain complex of left $R$--modules $C$ determines the dual
chain complex of right $R$--modules $C^{-*}:=\Hom_R(C_*,R)$.
Here, recall that $d^{-r}=(-1)^{r+1}d_{r}^\vee:C^{-r}\to C^{-r+1}$.

A group homomorphism~$\phi \colon \pi \to R^\times$ into the units of the ring~$R$ is
called a \emph{representation}. It is called \emph{unitary} if $\phi(g^{-1}) = \ol{\phi(g)}$
for all $g \in \pi$. A unitary representation~$\phi$ induces a homomorphism
$\phi\colon \Z[\pi] \to R$ of rings with involution.
With this homomorphism, we can give $R$ the structure of a $(\Z[\pi],R)$--bimodule.

The following is straightforward to prove and is left to the reader.
\begin{lemma}\label{lem:algebra}
Let $\phi \colon \pi \to R^\times$ be a unitary representation.
Let $A$ be a left $\Z[\pi]$--module. Then the following map is well-defined and
an isomorphism of left $R$--modules.
\[\begin{array}{rcl}\Hom_{\Z[\pi]}(A,R)^t&\to& \Hom_{R}(A^t\otimes_{\phi}R,R),\\
f&\mapsto&\left(a\otimes b\mapsto \overline{f(a)}\cdot b\right).
\end{array}\]
\end{lemma}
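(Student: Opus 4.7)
The proof is a direct verification, and the main thing to keep straight is the bookkeeping of left versus right, twisted versus untwisted, module structures. I would organize the verification into four steps.

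First, I would show that the assignment $\Phi(f)\colon a\otimes b \mapsto \overline{f(a)}\cdot b$ descends to a well-defined map on the balanced tensor product $A^t\otimes_\phi R$. For $g\in\pi$, the right $\Z[\pi]$-action on $A^t$ satisfies $a\cdot g = g^{-1}\cdot a$, and the left $\Z[\pi]$-action on $R$ coming from $\phi$ sends $g\cdot b$ to $\phi(g)b$. Using $\Z[\pi]$-linearity of $f$ and the hypothesis $\overline{\phi(g^{-1})} = \phi(g)$, I would compute both $\overline{f(g^{-1}\cdot a)}\cdot b$ and $\overline{f(a)}\cdot \phi(g)b$ and observe they agree. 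This is the single place where the unitary assumption is used.

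Second, I would check that $\Phi(f)$ is right $R$-linear with respect to the right $R$-module structure on $A^t\otimes_\phi R$ coming from multiplication on $R$; this is immediate. Third, I would verify that $\Phi$ itself is a homomorphism of left $R$-modules. The left $R$-action on $\Hom_{\Z[\pi]}(A,R)^t$ is defined, by the general twist convention, by $(r\cdot f)(a) = f(a)\cdot\overline{r}$, while the left $R$-action on the target $\Hom_R(A^t\otimes_\phi R, R)$ is postcomposition by left multiplication. Then
\[\Phi(r\cdot f)(a\otimes b) = \overline{f(a)\cdot\overline{r}}\cdot b = r\cdot\overline{f(a)}\cdot b = r\cdot\Phi(f)(a\otimes b),\]
as required.

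Finally, I would exhibit an explicit two-sided inverse $\Psi$, sending a right $R$-map $h\colon A^t\otimes_\phi R\to R$ to the function $a\mapsto\overline{h(a\otimes 1)}$. The computations $(g\cdot a)\otimes 1 = a\otimes\overline{\phi(g)}$ in the balanced tensor product, together with the unitarity of $\phi$, show that $\Psi(h)$ is $\Z[\pi]$-linear. The identities $\Psi(\Phi(f))(a) = \overline{\overline{f(a)}} = f(a)$ and $\Phi(\Psi(h))(a\otimes b) = h(a\otimes 1)\cdot b = h(a\otimes b)$ (the last using right $R$-linearity of $h$) confirm that $\Phi$ and $\Psi$ are mutual inverses. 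There is no substantive obstacle: the only step requiring any thought is the first, and there the only input needed is the unitary condition.
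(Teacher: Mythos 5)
Your proof is correct, and since the paper explicitly leaves this lemma as a routine verification for the reader, your direct check is exactly the intended argument: balancedness of the tensor relation via $\overline{\phi(g^{-1})}=\phi(g)$, left/right $R$-linearity on both sides using the twist convention $(r\cdot f)(a)=f(a)\cdot\overline{r}$, and the explicit inverse $h\mapsto\bigl(a\mapsto\overline{h(a\otimes 1)}\bigr)$. No gaps.
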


For a CW pair $(X,Y)$ and $p:\widetilde{X}\to X$ the universal cover, write
$\widetilde{Y}=p^{-1}(Y)$. Then setting $\pi=\pi_1(X)$ and writing
$C=C_*(\widetilde{X},\widetilde{Y};\Z)$ for the chain complex of left
$\Z[\pi]$--modules, we write $C^{-*}(\widetilde{X},\widetilde{Y};\Z):=C^{-*}$
for the dual chain complex.
Given a unitary representation $\phi:\pi \to R^\times$, we define the
following left $R$--modules
\[\begin{array}{rcl}H_r(X,Y;\phi)&:=&H_r\big((C_*(\widetilde{X},\widetilde{Y};\Z)^t\otimes_{\phi}
R )^t\big),\\
H^r(X,Y;\phi)&:=&H_{-r}\big((C^{-*}(\widetilde{X},\widetilde{Y};\Z)\otimes_{\phi}
R )^t\big)\cong H_r(\Hom_{\Z[\pi]}(C_*(\widetilde{X},\widetilde{Y};\Z), R
)^t)\end{array}\]

Suppose $(X,Y)$ is an $n$--dimensional Poincar\'{e} pair. For $R=Q$ a field, we may apply twisted Poincar\'{e}--Lefschetz duality, then Lemma \ref{lem:algebra} and finally the Universal Coefficient Theorem to obtain \[\begin{array}{rcl}H_r(X;\phi)&\cong& H^{n-r}(X,Y;\phi)\\
&\cong& H_{n-r}(\Hom_{ Q }(C_*(\widetilde{X},\widetilde{Y};\Z)^t\otimes_{\phi} Q , Q ))\\
&\cong& \Hom_Q(H_{n-r}(X,Y;\phi)^t,Q).\end{array}\]
When $H_*(Y;\phi)=0$, we have $H_*(X;\phi)\cong H_*(X,Y;\phi)$, so in this case we obtain further a Poincar\'{e} duality of $Q$--vector spaces of the form:
\begin{equation}\label{eq:poincare}\tag{$\dagger$}
H_r(X;\phi)\cong (H_{n-r}(X;\phi)^t)^\vee.
\end{equation}

\subsection{Self-dual based torsion}
\label{section:definition-torsion}

We recall the algebraic setup for torsion invariants.
Suppose $C$ is a based chain complex over a field $Q$ and
$\B=\{\B_*\}$ is a basis for $H_*(C)$, i.e.\ $\B_i$ is a basis of the
$Q$--vector space $H_i(C)$.  The \emph{torsion} $\tau(C; \B) \in
Q^\times$ is defined as in \cite[Definition I.3.1]{MR1809561}.
If $H_*(C)$ is identically zero, then we will just write $\tau(C)\in Q^\times$ for the torsion.

Let $(X,Y)$ be a finite CW pair with $\pi=\pi_1(X)$ and let
$\phi
\colon \pi \to Q \sm \{0\}$ be a representation to a field~$Q$.  Let
$\mathcal{B}=\{\B_*\}$ be a basis of $H_*(X,Y;\phi)$.  The universal
cover $(\widetilde{X}, \widetilde{Y})$ has a natural cell structure, and the
chain complex $C_*(\widetilde{X}, \widetilde{Y};\Z)$ can be based over
$\Z[\pi]$ by choosing a lift of each cell of $(X,Y)$ and orienting it.
This gives rise to a basing of $C_*(\wti{X},\wti{Y};\Z)^t\otimes_\phi Q$ over~$Q$.
We can then define the
\emph{twisted torsion}
\[
\tau^\phi(X,Y;\mathcal{B})\in Q^\times
\]
to be the torsion of $C_*(\wti{X},\wti{Y};\Z)^t\otimes_\phi Q$ with respect to~$\B$.  We will
drop $\B$ from the notation if $H_*(X,Y;\phi)=0$.

\begin{remark}
The element~$\tau^\phi(X,Y;\mathcal{B})$ is well-defined up to multiplication by
an element in $\pm \phi(\pi)$, and is invariant under simple
homotopy preserving~$\B$ \cite[Section II.6.1 and Corollary II.9.2]{MR1809561}.
By Chapman's theorem~\cite{MR0391109} the
invariant $\tau^\phi(X,Y;\mathcal{B})$ only depends on the
homeomorphism type of $(X,Y)$ and the basis $\mathcal{B}$.  In particular, when $(M,N)$ is a
manifold pair, we can define $\tau^\phi(M,N;\mathcal{B})$ by picking
any finite CW structure for $(M, N)$.
\end{remark}

Now we consider a special case of this construction and explain
how to deal with the choice of basis~$\mathcal{B}$.
Let $(M, \partial M)$ be a $3$--manifold with boundary.
We will focus in a rather special kind of representation obtained as follows.
Let $F$ be a free abelian group. Furthermore, assume we have two
group homomorphisms $\rho \colon \pi_1(M) \to \{\pm 1\} \subset \Q^\times$ and
$\alpha \colon \pi_1(M) \to F$. Denote the quotient field
of $\Q[F]$ by $\Q(F)$.
One can check directly  that the homomorphism~$\rho \otimes \alpha \colon \pi_1(M) \to \Q(F)$
is a representation, which we write as $\phi$ to save notation.

\begin{definition}
A representation $\phi \colon \pi_1(M) \to \Q(F)$ obtained
by the construction above is called \emph{sign-twisted}.
\end{definition}

Suppose $\phi \colon \pi_1(Y) \to \Q(F)$ is a sign-twisted representation such that
$H_*(\partial Y; \phi) = 0$.
Since $M$ is odd-dimensional, we can pick a basis
$\B=\{\B_*\}$ for $H_*(M;\phi)$ with the following property: for
each~$r$, $\B_{r}$ is the dual basis of $\B_{n-r}$ via the
Poincar\'e duality isomorphism (\ref{eq:poincare}).
We call such a basis $\B=\{\B_*\}$ a
\emph{self-dual basis} for $H_*(M;\phi)$.

\begin{definition}
The \emph{norm subgroup} $N(F)$ is
\[ N(F):= \{ r\cdot f \cdot q\cdot \overline{q}\,|\, r\in \Q^\times, f\in F  \text{ and }q\in \Q(F)\sm \{0\}\}. \]
The \emph{self-dual based torsion} $\tau^\phi(M)$ is the following element in the quotient
\[  \tau^{\phi}(M; \mathcal{B}) \in \Q(F)^\times / N(F). \]
\end{definition}

\begin{remark}We note the following about the preceding definition.
\bn
\item
The self-dual based torsion is indeed well-defined.
Switching from one self-dual basis to another
changes the torsion by an element of the form
$\pm q\overline{q}$ with $q\in \Q(F)\sm \{0\}$~\cite[Lemma 2.3]{MR3062861}.
Furthermore, different choices of lifts of the cells change the
torsion only by $\pm\alpha(\pi_1(M))$~\cite[Section II.6.1]{MR1809561}.
\item Note that $N(F)$ is \emph{not} just the group of ``norms'' $q\cdot \overline{q}$, but also their products with all elements of $F$ and also all non-zero rational numbers. The fact that the rational numbers are also contained plays a r\^ole later on, in Proposition~\ref{prop:mutorsion}.
\en
\end{remark}

Now we provide a way to distinguish elements in the quotient
$\Q(F)^\times / N(F)$ by constructing epimorphisms to $\Z/2\Z$.
Recall that $\Z[F]$ is a unique factorisation domain.
It follows that given any irreducible
 polynomial $g$ over $\Z[F]$ we have a well-defined monoid
homomorphism
\[ \ba{rcl} \Phi'_g\colon \Z[F]^\times &\to & \N_0\\
q&\mapsto & \mbox{maximal $n$ such that $g^n$ divides $q$}.\ea\]
This extends to an epimorphism
\[ \ba{rcl} \Phi'_g\colon \Q(F)^\times &\to & \Z\\
rs^{-1}&\mapsto & \Phi_g(r)-\Phi_g(s).\ea\]
We call a polyomial $g\in\Z[F]$ \emph{symmetric}, if there exist a unit $a \in \Z[F]^\times$
such that $g = a \ol g$.
If $g$ is symmetric, then for any $q\in \Z[F]^\times$ we have $\Phi_g(\ol{q})=\Phi_{\ol{g}}(q)=\Phi_g(q)$. Thus we see that $\Phi_g$ descends to an epimorphism
\[ \ba{rcl}
\Phi_g\colon \Q(F)^\times/N(F) &\to & \Z/2\Z\\{}
[rs^{-1}]&\mapsto & \Phi_g(r)-\Phi_g(s)\,\mbox{ mod } 2.
\ea \]

\begin{definition}\label{defn:ExtractNorms}
For an irreducible and  symmetric polynomial $g \in \Z[F]$,
we call $\Phi_g\colon \Q(F)^\times/N(F) \to \Z/2\Z$ the \emph{parity
homomorphism}.
\end{definition}

\subsection{Alexander polynomial of a pattern}\label{subsec:AlexanderPolynomial}

Let $P \subset \interior(S^1\times D^2)$ be a
winding number~$1$ pattern which is well-framed.
We denote the meridian of $P \subset S^1\times D^2$ by~$s$.
By Lemma~\ref{lem:meridian-winding-number-1}, the homology class of $s$ agrees with the class
$[ \{\pt\} \times \partial D^2 ] \in H_1(S^1\times D^2 \sm \nu P; \Z)$.
As $P$ is well-framed, the homology class $t$ of the longitude of $P$ is homologous
in $S^1\times D^2\sm P$ to that of the curve~$S^1 \times \{\pt\}$.

The meridian~$s$ and the longitude~$t$ determine a preferred
isomorphism~$H_1(S^1 \times D^2\sm \nu P;\Z) \cong \Z\langle s,t\rangle$,
where $\Z\langle s,t\rangle$ denotes the free abelian group on the generators $s$ and $t$.
As usual, we consider the Alexander module~$H_1(S^1 \times D^2 \sm \nu P; \Z[s^{\pm 1}, t^{\pm 1}])$ of the
pattern. This is a module over $\Z[s^{\pm 1}, t^{\pm 1}]$, and thus we can consider its order. (We refer to \cite[I.4.2]{MR1809561} for the definition of the order of a $\Z[s^{\pm 1}, t^{\pm 1}]$--module.)
We denote the order of the Alexander module by
\[ \Delta_P(s,t) \in \Z[H_1(S^1 \times D^2\sm \nu P;\Z)] = \Z[s^{\pm 1}, t^{\pm 1}], \]
and we refer to this as the \emph{Alexander polynomial} of~$P$.

Consider the standard embedding $S^1 \times D^2 \subset S^3$. The closure of its complement is
also a solid torus with core~$c$. With this embedding, we can associate
to the pattern~$P$ the two-component link~$L(P) = P \cup c \subset S^3$.

We have a diffeomorphism between the complements of $P$ and $L(P)$
\[ (S^1\times D^2)\sm \nu P \xrightarrow{\cong} S^3\sm \nu L(P),\]
which is isotopic to the inclusion. Correspondingly, we can also relate
the Alexander polynomial of~$P$ to the Alexander polynomial
of~$L(P)$.

\begin{lemma}
Let $P$ be a winding number one pattern. Then the following holds:
\begin{enumerate}[font=\normalfont]
\item Under the identification
$(S^1\times D^2)\sm \nu P=S^3\sm \nu L(P)$, the meridian of the pattern corresponds to the meridian of the first component of $L(P)$, while the longitude of the pattern corresponds to the meridian of the second component of $L(P)$.
\item We have $\Delta_P(s,t)=\Delta_{L(P)}(s,t)$, where $\Delta_{L(P)}(s,t)$ denotes the usual two-variable Alexander polynomial of the $2$--component link $L(P)$.
\end{enumerate}
\end{lemma}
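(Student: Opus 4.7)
The plan is to first establish~(1) by identifying the relevant curves on the boundary torus under the standard Heegaard splitting of $S^3$, and then deduce~(2) almost immediately, since both Alexander polynomials are orders of the same Alexander module computed over canonically isomorphic group rings.

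For the meridian part of~(1), the identification $(S^1 \times D^2) \sm \nu P = S^3 \sm \nu L(P)$ is induced by inclusion, so a meridian of $P$ viewed as a pattern is literally the same curve as the meridian $\mu_1$ of the first component of $L(P)$; nothing needs to be proved. For the longitude part, I would invoke well-framedness to rewrite $[\lambda_P] = [\ell]$ in $H_1((S^1 \times D^2) \sm \nu P; \Z)$, where $\ell := S^1 \times \{\pt_0\}$ for some $\pt_0 \in \partial D^2$, and then compute $[\ell] \in H_1(S^3 \sm \nu L(P); \Z) = \Z\langle \mu_1, \mu_2 \rangle$ by linking numbers. Under the standard Heegaard splitting $S^3 = (S^1 \times D^2) \cup_{T^2} (D^2 \times S^1)$, the longitude $\ell$ of the first solid torus is the meridian of the second: it bounds a meridional disk in $D^2 \times S^1$, which is disjoint from $P \subset S^1 \times D^2$ and meets $c$ transversely in one point. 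Hence $\lk(\ell, P) = 0$ and $\lk(\ell, c) = 1$, giving $[\ell] = \mu_2$, so that $t \mapsto \mu_2$ under the identification.

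For~(2), recall that on each side the Alexander polynomial is by definition the order of the Alexander module $H_1(X; \Z[H_1(X;\Z)])$ of the respective exterior $X$. Because the two exteriors are identified by the above homeomorphism, the corresponding Alexander modules are canonically isomorphic over the isomorphic group rings; by part~(1) this identification of ground rings sends $s \mapsto \mu_1$ and $t \mapsto \mu_2$. Since the order of a finitely generated module over a unique factorisation domain is an invariant of its isomorphism class, the orders agree, yielding $\Delta_P(s, t) = \Delta_{L(P)}(s, t)$ up to units in $\Z[s^{\pm 1}, t^{\pm 1}]$.

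The only real subtlety is the longitude computation, where one must pass through the boundary curve $\ell = S^1 \times \{\pt_0\}$ in order to evaluate linking numbers in $S^3$. This replacement is exactly what the well-framedness hypothesis supplies; without it, $\lambda_P$ and $\ell$ would differ by a multiple of $s = \mu_1$ in $H_1$, and one would pick up a spurious factor $s^k$ in the identification of variables relating $\Delta_P$ to $\Delta_{L(P)}$.
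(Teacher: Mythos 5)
Your proposal is correct and follows essentially the same route as the paper: the paper's proof is the one-line observation that the Alexander polynomial depends only on the link exterior together with the meridians, and your argument simply unpacks this, identifying $s\mapsto\mu_1$ directly and $t\mapsto\mu_2$ via well-framedness plus the linking-number computation for $S^1\times\{\pt\}$, and then matching the orders of the canonically identified Alexander modules. The extra detail (including the remark on the spurious $s^k$ without well-framedness) is accurate but not a departure from the paper's argument.
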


\begin{proof}
The lemma follows immediately from the fact that the Alexander polynomial of a link in $S^3$ only depends on the complement together with
the meridians.
\end{proof}

For local patterns, the Alexander polynomial has the structure
described in the next lemma.

\begin{lemma}\label{lem:alexander-polynomial-of-pj}
Let $J$ be an oriented knot in $S^3$ and let $\mu$ be a meridian of $J$.
We denote the pattern that is given by removing a small
open tubular neighbourhood of~$\mu$ from~$S^3$  by~$P_J$.
Then $L(P_J)=J\cup \mu$ and
\[ \Delta_{P_J}(s,t)\overset{.}{=}\Delta_{L(P_J)}(s,t)\overset{.}{=}\Delta_J(s),\]
where $\overset{.}{=}$ denotes equality up to units in $\Z[s^{\pm 1}, t^{\pm 1}]$.
\end{lemma}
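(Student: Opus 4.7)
The first equality is immediate from the preceding lemma, so I focus on showing $\Delta_{L(P_J)}(s,t) \doteq \Delta_J(s)$ for the $2$-component link $L(P_J) = J \cup \mu$ in $S^3$, where $\mu$ is a meridian of $J$.

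My plan is to compute $\Delta_{L(P_J)}(s,t)$ directly by Fox calculus on a carefully chosen Wirtinger presentation. First I would draw the link $L$ in a diagram where $\mu$ appears as a small circle around a single arc $g_1$ of $J$. Since $\mu$ is unknotted in $S^3$ and has linking number $1$ with $J$, the two components cross exactly twice in this local picture: once with $J$ over $\mu$, and once with $\mu$ over $J$. The key observation is that $\mu$ has only one under-crossing in this diagram, so it contributes only a single Wirtinger generator $h$ (rather than two).

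Next I would analyze the two new Wirtinger relations arising from the $\mu$--$J$ crossings. At the $J$-over-$\mu$ crossing, where both the incoming and outgoing arcs of $\mu$ equal the single arc $h$, the relation reduces to $[g_1,h]=1$. At the $\mu$-over-$J$ crossing, the apparent splitting of $J$ into two arcs becomes trivial in the presence of this commutation, so that relation is redundant. Consequently, $\pi_1(X_{L(P_J)})$ is obtained from $\pi_1(X_J)$ by adjoining one generator $h$ and the single relation $[g_1,h]=1$, where $g_1$ is the meridian of $J$. Geometrically, this relation reflects the annulus $A \subset X_{L(P_J)}$ obtained by intersecting the meridional disk of $\mu$ with the link exterior.

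It remains to extract the Alexander polynomial. From the presentation above, the Alexander matrix has the block form
\[
A_L = \begin{pmatrix} A_J & 0 \\ (1-t,\,0,\,\ldots,\,0) & s-1 \end{pmatrix},
\]
where $A_J$ is the reduced Alexander matrix of $J$. Computing the maximal square minors by cofactor expansion along the new final row: omitting the $h$-column yields $\pm(1-t)\Delta_J(s)$, while omitting any $g_j$-column yields $\pm(s-1)\Delta_J(s)$, each up to units in $\Lambda = \Z[s^{\pm 1}, t^{\pm 1}]$. The Alexander polynomial $\Delta_{L(P_J)}(s,t)$ is the greatest common divisor of these minors, and since $s-1$ and $t-1$ are coprime in the UFD $\Lambda$, this GCD equals $\Delta_J(s)$ up to units.

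The main obstacle is the initial step of correctly identifying the Wirtinger presentation, in particular recognizing that $\mu$ contributes only a single arc due to having only one under-crossing, and thereby reducing the two new Wirtinger relations to a single commutation relation. Once that simplification is in place, the Alexander polynomial computation is a routine cofactor expansion and coprimality argument.
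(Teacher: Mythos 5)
Your argument is correct, but it takes a different route from the paper: the paper disposes of this lemma by citation (``see e.g.\ \cite[Proposition~5.1]{FK08}''), whereas you give a self-contained computation. Your Wirtinger analysis is right: since $\mu$ has a single undercrossing it contributes one generator $h$, the $J$-over-$\mu$ crossing gives $[g,h]=1$ with $g$ the over-arc of $J$, and combined with the other crossing relation this identifies the two sub-arcs of $J$, so $\pi_1(X_{L(P_J)})$ is $\pi_1(X_J)$ with $h$ adjoined and one commutation relation with a meridian $g_1$ (sanity check: $J$ the unknot gives $\Z^2$ for the Hopf link). The Fox matrix and its minors, $\pm(1-t)\Delta_J(s)$ and $\pm(s-1)\Delta_J(s)$, are as you say. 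The one place where you should be careful to invoke the precise classical statement is the passage from the Jacobian to the multivariable polynomial: for a link of $\mu\geq 2$ components the correct statement is $\det A_j \overset{.}{=} (t_{i(j)}-1)\,\Delta_L$, where $t_{i(j)}$ is the variable of the component carrying the deleted generator (Torres/Fox; see e.g.\ Burde--Zieschang); your ``$\Delta_L$ is the gcd of the maximal minors'' is then a consequence, exactly because $s-1$ and $t-1$ are coprime in $\Z[s^{\pm1},t^{\pm1}]$, which is the coprimality you invoke. Indeed, applying the displayed formula directly to the minor obtained by deleting the $h$-column, $(1-t)\Delta_J(s)\overset{.}{=}(t-1)\Delta_L$, gives the conclusion even more quickly. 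What your approach buys is an elementary, reference-free proof; what the citation buys the paper is brevity, and the cited result also packages the identification of meridian/longitude classes that the paper needs alongside the polynomial identity. As a further consistency check, your answer agrees with the Torres condition $\Delta_L(s,1)\overset{.}{=}\Delta_J(s)$ for linking number one.
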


\begin{proof}
See e.g.\ \cite[Proposition~5.1]{FK08}.
\end{proof}

After this detour on Alexander polynomials, we proceed by calculating
the self-dual based torsion of a satellite.
Fix a 3--manifold $Y^3$, a class~$x\in[S^1,Y]$ and pick a knot~$K$ representing $x$.
Also pick a framing~$\psi\colon S^1\times D^2\hookrightarrow Y$ of $K$.
Note that the complement $Y\sm \nu P(K)$ is glued from two pieces along a $2$--torus
\[Y\sm \nu P(K) = Y\sm \psi(S^1\times D^2)\cup \psi\left(S^1\times D^2\sm\nu P\right)
\cong Y\sm \nu K\cup_{T^2} \left(S^1\times D^2 \sm\nu P\right).\]
The glueing formula for torsion allows us to express the torsion of the satellite
in terms of the torsion of $K$ and the torsion of the pattern~$P$.

\begin{proposition}\label{prop:glueing}
Let $Y$ be a closed oriented 3--manifold.
Let $P$ be a pattern with winding number~$1$ and let
\[\phi\colon \pi_1(Y\sm P(K))\to \Q(F)\]
be a sign-twisted representation. Suppose that $\phi\not\equiv 1$ and that it is non-trivial when restricted to $\partial \nu P(K)$.
Then
\[\tau^{\phi}(Y\sm\nu  P(K))= \tau^{\phi}(Y\sm\nu  K)
	\cdot \tau^{\phi}(S^1\times D^2\sm\nu  P)\in \Q(F)^\times/N(F).\]
\end{proposition}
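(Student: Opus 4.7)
The plan is to apply a Mayer--Vietoris / gluing formula for Reidemeister torsion to the decomposition
\[Y\sm\nu P(K) = (Y\sm\nu K)\cup_{T^2}(S^1\times D^2\sm\nu P),\]
where $T^2 = \psi(S^1\times \partial D^2) = \partial\nu K$ is the gluing torus, and reduce the proposition to showing that the contribution from $T^2$ is trivial in $\Q(F)^\times/N(F)$.

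First, I would verify that $\phi$ restricts non-trivially to $T^2$. Since $P$ has winding number one, Lemma~\ref{lem:meridian-winding-number-1} gives that $\mu_K \sim \mu_{P(K)}$ and $\lambda_K \sim \lambda_{P(K)}$ in $\nu K \sm \nu P(K)$, so $\phi$ takes the same values on the canonical bases of $H_1(T^2)$ and $H_1(\partial\nu P(K))$. The hypothesis that $\phi|_{\partial \nu P(K)} \not\equiv 1$ therefore implies $\phi|_{T^2}\not\equiv 1$, so the $\phi$-twisted cellular complex of $T^2$ is acyclic. A direct calculation with the standard CW structure on $T^2$ (one $0$--cell, two $1$--cells, one $2$--cell) then yields $\tau^\phi(T^2)= \pm 1 \in \Q(F)^\times$, which is trivial in $\Q(F)^\times/N(F)$.

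Next, I would apply Milnor's multiplicativity formula \cite[Theorem~3.4]{MR1809561}: for a CW triple $(X,A,B)$ with $X=A\cup B$, $A\cap B=C$ and compatible chain-level bases,
\[\tau(X) = \pm \tau(A)\cdot \tau(B)\cdot\tau(C)^{-1}\cdot \tau(\mathcal{H}),\]
where $\tau(\mathcal{H})$ is the torsion of the $\phi$-twisted Mayer--Vietoris long exact sequence in homology with respect to the chosen bases. Because $H_*(T^2;\phi)=0$, this exact sequence collapses into the Mayer--Vietoris isomorphisms
\[H_k(Y\sm\nu K;\phi)\oplus H_k(S^1\times D^2\sm\nu P;\phi)\xrightarrow{\cong} H_k(Y\sm\nu P(K);\phi),\]
so $\tau(\mathcal{H})$ is just a change-of-basis determinant across these isomorphisms.

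The main obstacle is to verify that once self-dual bases are chosen on each of the three pieces, all remaining corrections lie in $N(F)$. This should follow from naturality of Poincar\'e--Lefschetz duality under the three-manifold gluing together with the observation that any two self-dual bases on the same $\phi$-twisted vector space are related by a change of basis whose determinant has the form $\pm q\bar q$, hence lies in $N(F)$ by definition. The $\tau(C)^{-1}$ factor is absorbed by step one, the basis sign and the $\Q^\times$-factors likewise lie in $N(F)$, and what remains after reducing modulo $N(F)$ is exactly the desired identity $\tau^\phi(Y\sm\nu P(K)) = \tau^\phi(Y\sm\nu K)\cdot \tau^\phi(S^1\times D^2\sm\nu P)$.
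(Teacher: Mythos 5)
Your outline follows the same skeleton as the paper's proof (decompose along $T^2=\partial \nu K$, apply multiplicativity of torsion, use $H_*(T^2;\phi)=0$ and $\tau^\phi(T^2)=1$), but the heart of the proposition is precisely the step you defer to ``naturality of Poincar\'e--Lefschetz duality,'' and as written your argument does not close it. The missing ingredient is a computation you never make: because $P$ has winding number one, the inclusion $\nu P\hookrightarrow S^1\times D^2$ is a homotopy equivalence, so the Mayer--Vietoris sequence for $S^1\times D^2=\nu P\cup_{\partial\nu P}(S^1\times D^2\sm \nu P)$ gives $H_*(\partial \nu P;\phi)\cong H_*(S^1\times D^2\sm\nu P;\phi)$, and since $\phi$ is non-trivial on the torus $\partial\nu P$ both groups vanish. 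This $\phi$--acyclicity of the pattern exterior is what collapses the homological Mayer--Vietoris sequence $\mathcal{H}$ to two isomorphisms $A_r\colon H_r(Y\sm\nu K;\phi)\to H_r(Y\sm\nu P(K);\phi)$, $r=1,2$; the commuting square with the duality isomorphisms (\ref{eq:poincare}) in self-dual bases then gives $\ol{A_2^{\,t}}A_1=\id$, hence $\tau(\mathcal{H})=\det(A_1)\det(A_2)^{-1}=\det(A_1)\ol{\det(A_1)}\in N(F)$.

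Without that acyclicity claim, your version of $\tau(\mathcal{H})$ is the determinant of maps $H_r(Y\sm\nu K;\phi)\oplus H_r(S^1\times D^2\sm\nu P;\phi)\to H_r(Y\sm\nu P(K);\phi)$ with a possibly non-zero middle summand, and the two maps in degrees $1$ and $2$ are not in any evident way dual to one another: the intersection pairing of the glued manifold does not simply restrict to the orthogonal sum of the pairings of the pieces along the Mayer--Vietoris maps, so ``naturality of duality under gluing'' does not by itself show $\tau(\mathcal{H})\in N(F)$. The fact that two self-dual bases differ by a determinant of the form $\pm q\ol q$ only gives well-definedness of each torsion factor; it says nothing about the torsion of the exact sequence. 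So you have correctly located where the work lies, but the proposed justification would not go through as stated; you need the vanishing of $H_*(S^1\times D^2\sm\nu P;\phi)$ (or a genuine duality argument for the full Mayer--Vietoris sequence, which you have not supplied). A minor additional point: to see that $\phi$ is non-trivial on $T^2$ you invoke the longitude statement of Lemma~\ref{lem:meridian-winding-number-1}, which assumes a well-framed pattern not granted in the proposition; it is cleaner to note that both boundary inclusions into $\nu K\sm\nu P(K)$ induce isomorphisms on $H_1$ by Lemma~\ref{lem:hbordism}, and $\phi$ restricted to either torus factors through these, so non-triviality on $\partial\nu P(K)$ transfers to $T^2$.
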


\begin{proof}
Using coefficients determined by $\phi$, consider the short exact
Mayer--Vietoris sequence of chain groups of $\Q(F)$--vector spaces
\[0\to C_*(T^2)\to C_*(Y\sm \nu K)\oplus C_*(S^1\times D^2\sm \nu P)\to C_*(Y\sm \nu P(K))\to 0,\]
and choose cell bases for the chain groups. Furthermore,
choose bases $\mathcal{B'''}$, $\mathcal{B}$, $\mathcal{B'}$, $\mathcal{B''}$
for the respective homologies of these chain complexes reading left to right.

As $\phi\not\equiv 1$ and it is non-trivial when restricted to $T^2$,
we obtain that
$H_*(T^2;\phi)=0$ and $\tau^{\phi}(T^2)=1$ \cite[Lemma II.11.11]{MR1809561},
and so $\mathcal{B'''}$ is empty.
From \cite[Theorem 2.2(4)]{MR3062861}, we deduce that
\[\tau^\phi(Y\sm\nu  P(K);\mathcal{B''})=\tau^\phi(Y\sm\nu
K;\mathcal{B})\cdot\tau^{\phi}(S^1\times D^2\sm
P;\mathcal{B'})\cdot\tau(H)\in \Q(H)^\times/ N(F),\]
where $H$ is the Mayer--Vietoris sequence in homology, with coefficients determined by $\phi$. Here $H$ is
thought of as an acyclic chain complex and $\tau(H)$ is calculated using the
chain basis determined by $\mathcal{B}$, $\mathcal{B'}$, $\mathcal{B''}$.

\begin{claim}
The homology $H_*(S^1\times D^2\sm P;\phi)=0$.
\end{claim}

As $P$ has winding number 1, the inclusion $\nu P\hookrightarrow S^1\times D^2$
is a homotopy equivalence. So the Mayer--Vietoris sequence of $S^1\times
D^2=\nu P \cup_{\partial \nu P}(S^1\times D^2\sm\nu P)$ with
$(\phi)$--coefficients determines isomorphisms $H_*(\partial \nu
P;\phi)\cong H_*(S^1\times D^2\sm \nu P;\phi)$.
By assumption we have $\phi\not\equiv 1$, and considering the composition
\[ H_1(\partial \nu P; \Z) \twoheadrightarrow H_1(\nu P; \Z) \xrightarrow{\cong} H_1(S^1 \times D^2; \Z) \]
which is induced by inclusions, we also see that $\phi$ restricts
to a non-trivial representation on $\partial \nu P$. As $\partial \nu P$ is a $2$--torus, this implies that
$H_*(\partial \nu P;\phi)=0$ \cite[Lemma II.11.11]{MR1809561} and
the claim follows.

We have seen that the Mayer--Vietoris sequence $H$ is non-zero only in degree~$r=1,2$
and so consists of based isomorphisms
$A_r\colon H_r(Y\sm\psi(S^1\times D^2);\phi)\xrightarrow{\cong} H_r(Y\sm \nu P(K);\phi)$.
The proposition now follows from the next claim.

\begin{claim}
The torsion $\tau(H)\in N(F)$.
\end{claim}

By definition we have that the torsion equals $\tau(H)=\det(A_1)\cdot\det(A_2)^{-1}$.
But consider the commutative diagram, where we use the Poincar\'{e} duality isomorphisms already observed in Equation (\ref{eq:poincare}): \[\xymatrix{H_2(Y\sm\psi(S^1\times D^2);\phi)\ar[d]^-{\Hom_Q(-^t,Q)}_-{\cong}\ar[rr]^-{A_2}_-{\cong}&&H_2(Y\sm\nu P(K);\phi)\ar[d]^-{\Hom_Q(-^t,Q)}_-{\cong}\\
(H_2(Y\sm\psi(S^1\times D^2);\phi)^t)^\vee&&(H_2(Y\sm\nu P(K);\phi)^t)^\vee\ar[ll]_-{(A_2)^\vee}^-{\cong}\\
H_1(Y\sm\psi(S^1\times D^2);\phi)\ar[u]_-{\text{PD}}^-{\cong}\ar[rr]^-{A_1}_-{\cong}&&H_1(Y\sm\nu P(K);\phi)\ar[u]_-{\text{PD}}^-{\cong}
}\]
As $\mathcal{B}$ and $\mathcal{B}''$ are each self-dual bases, the Poincar\'{e} duality arrows are given by the identity matrix in this basis. The matrix for $(A_2)^\vee$ is the transpose dual matrix $\ol{{A_2}^t}$, so from the bottom square we deduce that $\ol{{A_2}^t} A_1 = \id$, whence $\det(A_2)^{-1}=\det({A_2}^t)^{-1}=\overline{\det(A_1)}$, and so $\tau(H)$ is a norm as required.  This completes the proof of the claim and therefore of the proposition.
\end{proof}

We can express the factor~$\tau^{\phi}(S^1\times D^2\sm\nu  P)$ in terms
of the Alexander polynomial of the link~$L(P)$ introduced earlier in this section.

\begin{proposition}\label{prop:tors}
Let $P$ be a pattern with winding number~$1$ and a sign-twisted
representation~$\phi\colon \pi_1(S^1\times D^2\sm \nu P)\to \Q(F)$ with
associated map~$h \colon H_1(S^1\times D^2\sm \nu P; \Z) \to \Q(F)$.
Then
\[\tau^{\phi}(S^1\times D^2\sm\nu  P)= \Delta_{L(P)}(h(s),h(t))\in \Q(F)/N(F),\]
where
$s$ is the meridian of the pattern and
$t$ is the longitude of $S^1 \times D^2$.
\end{proposition}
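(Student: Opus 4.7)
The plan is to reduce the computation to the classical formula of Milnor--Turaev computing the Reidemeister torsion of a link exterior in $S^3$ in terms of its multivariable Alexander polynomial.

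First, the diffeomorphism $(S^1\times D^2)\sm \nu P \xrightarrow{\cong} S^3\sm \nu L(P)$ from the preceding lemma identifies the cellular chain complexes, and hence equates the two torsions. Under this diffeomorphism, the meridian $s$ of $P$ corresponds to the meridian of $P\subset L(P)$, and the longitude $t$ corresponds to the meridian of the core $c\subset L(P)$. Since $\Q(F)^\times$ is abelian, the sign-twisted representation $\phi$ factors through the abelianisation as
\[ \pi_1(S^3\sm\nu L(P)) \twoheadrightarrow H_1(S^3\sm\nu L(P); \Z) \xrightarrow{h} \Q(F)^\times, \]
so $h$ determines the corresponding ring homomorphism $\Z[H_1(S^3\sm \nu L(P);\Z)] \to \Q(F)$ sending the two meridional generators to $h(s)$ and $h(t)$ respectively.

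Next, I would invoke the classical formula of Milnor--Turaev (see e.g.\ \cite[Chapter~II]{MR1809561}) which, for a link with at least two components in $S^3$ under an abelian coefficient system, computes the Reidemeister torsion of the exterior as the multivariable Alexander polynomial evaluated via $h$, up to multiplication by units of the form $\pm r\cdot g$ with $r\in\Q^\times$ and $g\in F$. The required vanishing of $H_*(S^3\sm \nu L(P);\phi)$ that makes $\tau^\phi(S^3\sm \nu L(P))$ well-defined was already established during the proof of Proposition~\ref{prop:glueing}. Since $L(P)=P\cup c$ has two components, no correction factor of the form $(h(s)-1)$ or $(h(t)-1)$ arises, and the formula gives
\[ \tau^\phi(S^3\sm \nu L(P)) = \Delta_{L(P)}(h(s), h(t)) \]
up to the stated unit ambiguity.

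Finally, every such unit $\pm r\cdot g$ lies in the norm subgroup $N(F)$: taking $q=1$ in the definition of $N(F)$ we see that $N(F)$ contains all of $\Q^\times \cdot F$, in particular $-1$ and all of $F$. Consequently the above equality descends to the quotient $\Q(F)^\times/N(F)$, yielding the statement of the proposition. The only subtlety I would need to verify carefully is that the classical Milnor--Turaev formula applies verbatim for the sign-twisted representation $\phi$; once $\phi$ is viewed simply as an abelian representation into the units of the field $\Q(F)$ (which is all that enters the torsion computation), this is essentially just a bookkeeping exercise in normalisation conventions, which I expect to be the main, though routine, obstacle.
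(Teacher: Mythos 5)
Your proposal follows essentially the same route as the paper: identify the pattern exterior with the exterior of the two-component link $L(P)$, use the Milnor--Turaev identification of the torsion of a link exterior in $S^3$ with its multivariable Alexander polynomial, evaluate via $h$, and absorb the remaining unit ambiguity into $N(F)$. The step you defer as ``bookkeeping'' is exactly where the paper spends its remaining effort: it first computes the torsion for the full abelianisation $\psi$ into $\Q(H)$ via the homology-order formula (using that $\ord(H_0)=\ord(H_2)=1$ for a link exterior with $\rk H>1$), and then transfers to the sign-twisted $\phi$ by Turaev's functoriality of torsion under the induced coefficient map, checking via the Torres condition that $\Delta_{L(P)}(h(s),h(t))\neq 0$ so that this base change is legitimate.
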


\begin{proof}
The space $S^1\times D^2\sm \nu P$ is homeomorphic to the exterior of the
2--component link $L$ in $S^3$, consisting of the embedded pattern $P$ in $S^3$
together with an embedded loop $\{\pt\}\times \partial D^2$ for any choice
$\pt\in S^1$.
Note that $H_1(S^1\times D^2\sm \nu P;\Z) =: H$ is free abelian.
We already showed in the proof of Proposition~\ref{prop:glueing} that $H_*(S^1\times D^2\sm P;\phi)=0$,
so in fact this torsion can be calculated using torsion results in the acyclic chain complex setting.
We write $\psi:\pi_1(S^1\times D^2\sm \nu P)\to H\subset \Q(H)^\times$ for the abelianisation map.

The torsion can be expressed in terms of generators of the order ideals~\cite[Theorem 4.7]{MR1809561}
as follows
\[\tau^\psi(S^1\times D^2\sm \nu P)=\prod_{i=0}^2\ord(H_i(S^3\sm L;\psi))^{(-1)^{i+1}}
	\in\Q(H)^\times/N(H).\]
(Strictly speaking this equality only holds if the right-hand side is non-zero, but we will see in a few lines that this is the case.)
But as $S^3\sm \nu L$ is a $3$--manifold with non-empty boundary and $\rk H>1$ we conclude  that
$\ord(H_0(S^3\sm L;\psi))=\ord(H_2(S^3\sm L;\psi))=1$ \cite[Proposition 3.2~(5)~and~3.2~(6)]{MR2777847}.
We have
\[\tau^\psi(S^1\times D^2\sm \nu P)=\ord(H_1(S^3\sm L;\psi))
=\Delta_{L(P)}(s,t)\in \Q(H)^\times.\]
But $h$ induces a map $h:\Q(H)\to \Q(F)$ and under this map
$h(\tau^\psi(S^1\times D^2\sm \nu P))=\tau^\phi(S^1\times D^2\sm \nu P)$
\cite[Proposition I.3.6]{MR1809561}.
Hence we have
\[\tau^\phi(S^1\times D^2\sm \nu P)=h(\Delta_{L(P)}(s,t))=\Delta_{L(P)}(h(s),h(t))\in \Q(F)^\times/N(F).\]
By the Torres condition~\cite[Section~5.1]{MR2931688}, $\Delta_{L(P)}(1,1)$ is equal to the linking number of $L(P)$, so in particular the right-hand side is non-zero. Now  the proof is complete.
\end{proof}

\section{Topological almost-concordance invariants}

In this section, we describe how self-dual based torsion gives rise to an almost-concordance invariant.
In the $3$--sphere $S^3$, the meridian of a knot~$K$ always defines a non-torsion class
in $H_1(S^3 \sm \nu K; \Z)$. As we will see in the following proposition, in a general $3$--manifold~$Y$, this might not be the case.

\begin{proposition}\label{Prop:CurveDuality}
Let $Y$ be a closed oriented 3--manifold.
Let $x \in [S^1, Y]$ be a free homotopy class.
Let $K$ be a knot in the homotopy class~$x$.
Suppose that $[x]$ has infinite order in $H_1(Y;\Z)$. Then the following holds:
\begin{enumerate}[font=\normalfont]
\item\label{item:curve-duality-1} The meridian $\mu$ of $K$ represents a torsion element in $H_1(Y\sm \nu K;\Z)$.
\item\label{item:curve-duality-2} If $[x]\in H_1(Y;\Z)$ equals  $pa$ for some prime $p$ and  $a\in H_1(Y;\Z)$,
then the meridian represents a non-zero element in $H_1(Y\sm \nu K;\Z_p)$.
\end{enumerate}
\end{proposition}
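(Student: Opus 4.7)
The plan is to derive both statements from the long exact sequence of the pair $(Y, Y\sm\nu K)$ (with integer coefficients and then with $\Z_p$ coefficients), identifying the connecting homomorphism via excision and the incoming map via Poincar\'e duality.

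First, excision together with $\overline{\nu K}\cong S^1\times D^2$ yields $H_2(Y, Y\sm\nu K;\Z)\cong\Z$, generated by an oriented meridional disc of $K$ (whose boundary is $\mu$), while $H_1(Y, Y\sm\nu K;\Z)=0$. The relevant portion of the long exact sequence is thus
\[ H_2(Y;\Z)\xrightarrow{\iota}\Z\xrightarrow{\partial}H_1(Y\sm\nu K;\Z)\to H_1(Y;\Z)\to 0, \]
with $\partial(1)=[\mu]$. Consequently the order of $[\mu]$ in $H_1(Y\sm\nu K;\Z)$ equals the positive generator of $\operatorname{im}(\iota)$ when $\iota\ne 0$, and is infinite when $\iota=0$. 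In particular $[\mu]$ is torsion if and only if $\iota\ne 0$.

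Next I would identify $\iota$ geometrically: for $\alpha\in H_2(Y;\Z)$ represented by a surface transverse to $K$, $\iota(\alpha)$ counts the signed intersections with $K$, i.e.\ the intersection pairing $\alpha\cdot[K]$. Under Poincar\'e duality and UCT, $H_2(Y;\Z)\cong H^1(Y;\Z)\cong\Hom(H_1(Y;\Z),\Z)$ (no $\operatorname{Ext}$ term since $H_0(Y;\Z)$ is free), and $\iota$ becomes evaluation at $[x]=[K]$. Hence $\iota\ne 0$ if and only if some homomorphism $H_1(Y;\Z)\to\Z$ is nonzero on $[x]$, which is exactly the condition that $[x]$ is non-torsion. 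This proves (1).

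For (2) I would rerun the argument verbatim with $\Z_p$ coefficients. The analogue $\iota_p\colon H_2(Y;\Z_p)\to\Z_p$ corresponds to evaluation of homomorphisms $H_1(Y;\Z)\to\Z_p$ on $[x]=pa$; but every such homomorphism kills $pa$, so $\iota_p=0$. The corresponding $\partial_p$ is therefore injective, and $[\mu]\ne 0$ in $H_1(Y\sm\nu K;\Z_p)$. The only step that demands genuine care is matching the excision orientation of the meridional disc with the Poincar\'e duality and UCT conventions so that $\iota$ really becomes evaluation at $[x]$; after that, the proof is a short diagram chase in the long exact sequence.
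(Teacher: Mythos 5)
Your argument is correct, but it takes a genuinely different route from the paper. You work with the long exact sequence of the pair $(Y, Y\sm\nu K)$, use excision to identify $H_2(Y,Y\sm\nu K;\Z)\cong\Z$ with the meridional disc as generator, and then identify the map $H_2(Y;\Z)\to H_2(Y,Y\sm\nu K;\Z)$ with the intersection pairing against $[K]$, i.e.\ (via Poincar\'e duality of the closed manifold $Y$ and the Universal Coefficient Theorem) with evaluation of homomorphisms $H_1(Y;\Z)\to\Z$ (respectively $\Z_p$) on $[x]$. The paper instead runs the Mayer--Vietoris sequence for $Y=(Y\sm\nu K)\cup_{T^2}\nu K$ and invokes the ``half-lives-half-dies'' lemma for the knot exterior: for (1) the rank-one kernel of $H_1(T^2;\Z)\to H_1(Y\sm\nu K;\Z)$ is forced to be spanned by a multiple of $\mu$ because $\lambda$ has infinite order in $H_1(Y;\Z)$, and for (2) a change of basis $\lambda'=k\mu+\lambda$ together with the same lemma over the field $\Z_p$ shows $\mu\neq 0$. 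So the paper's duality input is Poincar\'e--Lefschetz duality of the exterior (packaged in the cited lemma), whereas yours is Poincar\'e duality of $Y$ itself; your formulation makes part (2) essentially immediate, since every homomorphism to $\Z_p$ kills $pa$, at the cost of having to verify carefully (as you note) that the connecting data really is intersection with $K$ with the correct orientation conventions --- a standard but not entirely free step. Both proofs are complete and of comparable length; yours has the small additional virtue of exhibiting the exact order of $[\mu]$ as the divisibility of $[x]$ in $H_1(Y;\Z)$ modulo torsion.
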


This proposition is surely well-known to the experts, but we include a proof for the convenience of the reader.

\begin{proof}
Let $[x]\neq 0\in H_1(Y;\Z)$ be of the form $[x]=n\cdot u$ where $u$ is a primitive element of $H_1(Y;\Z)$ of infinite order.
Let $K$ be a knot representing $x$. Let $\mu$ be its meridian and pick a longitude $\lambda$. By a slight abuse of notation, we denote the corresponding elements in the various homology groups by the same symbol.
In the following we identify the boundary torus of $Y\sm \nu K$ with the product $\mu\times \lambda$.

We first consider the Mayer--Vietoris sequence with $\Z$--coefficients
\[\dots \,\to \, H_1(\mu \times \lambda;\Z)\to H_1(Y\sm \nu K;\Z)\oplus H_1(\nu K;\Z)\to H_1(Y;\Z)\to 0.\]
Since $\lambda$ has infinite order in $H_1(Y;\Z)$, it also has infinite order
in $H_1(Y\sm \nu K;\Z)$. Also note that $\mu=0$ in $H_1(Y;\Z)$ since the
meridian bounds a disc in $Y$.

Recall that the half-live-half-die lemma~\cite[Lemma 8.15]{MR1472978} says that for any orientable 3--manifold $Z$ the kernel of $H_1(\partial Z)\to H_1(Z)$ has rank one-half the first betti number of $\partial Z$.
From this lemma it follows that
$H_1(\mu\times \lambda;\Z)\to H_1(Y\sm \nu K;\Z)$ has a kernel of
rank one. Therefore the kernel is generated by an element of the form
$a[\mu]+b[\lambda]$ with $(a,b)\ne (0,0)$. Since $\mu=0$ and $\lambda\ne 0$ in
$H_1(Y;\Z)$ we have $b=0$.
Thus we have shown that $\mu$ is torsion in $H_1(Y\sm \nu K)$.  This completes the proof of (\ref{item:curve-duality-1}).

Now, to prove (\ref{item:curve-duality-2}), suppose that $p$ is a prime number such that  $[x]=pa\in H_1(Y;\Z)$  for some $a\in H_1(Y;\Z)$. We consider the same Mayer--Vietoris sequence as above, but now with coefficients in $\Z/p\Z=:\Z_p$.  We obtain:
\[H_2(Y;\Z_p)\,\to \, H_1(\mu \times \lambda;\Z_p)\to H_1(Y\sm \nu K;\Z_p)\oplus H_1(\nu K;\Z_p)\to H_1(Y;\Z_p)\to 0.\]
Since $H_1(\nu K;\Z_p)=H_1(\lambda;\Z_p)$, this sequence simplifies to
\[H_2(Y;\Z_p)\,\to \, \Z_p\langle\mu\rangle\to H_1(Y\sm \nu K;\Z_p)\to H_1(Y;\Z_p)\to 0,\]
where we recall that $\Z_p\langle\mu\rangle$ denotes the free $\Z_p$--module generated by~$\mu$.
By our hypothesis, $\lambda=0\in H_1(Y;\Z_p)$. By the exactness of the sequence, there exists a $k\in \Z_p$ such that $\lambda' := k\cdot \mu+\lambda$ is zero in $H_1(Y\sm \nu K;\Z_p)$. Note that $\mu$ and $\lambda'$ also form a basis for $H_1(\mu\times \lambda;\Z_p)$. Since $\lambda'=0$ in $H_1(Y\sm \nu K;\Z_p)$, it follows from the aforementioned half-live-half-die lemma that $\Z_p\langle\mu\rangle\to H_1(Y\sm \nu K;\Z_p)$ is injective,
hence $\mu$  is a non-zero element in $H_1(Y\sm \nu K;\Z_p)$.
\end{proof}

For an abelian group~$H$, let $FH$ denote the maximal free abelian
quotient of $H$.  We will always view $FH$  as a multiplicative group.
We consider the following knot invariant.

\begin{definition}
Let $K$ be an oriented knot in a 3--manifold $Y$.
\begin{enumerate}
\item A homomorphism $\rho\colon H_1(Y\sm \nu K;\Z/2\Z)\to \{\pm 1\} $ which is non-trivial on
the meridian is called a \emph{meridional character}. Denote the set of meridional characters
by $\mathfrak{C}(K)$.
\item Abbreviate $F:= FH_1(Y;\Z)$.
For a knot~$K$ consider the representation
\[\alpha \colon H_1(Y\sm K;\Z)\xrightarrow{i} H_1(Y;\Z)\to F,\]
which is induced by the inclusion $i$.
Let $\rho\colon H_1(Y\sm \nu K;\Z/2\Z)\to \{\pm 1\} $ be a meridional character.
Define the \emph{self-dual torsion} of $K$ to be
\[ \tau_\rho (K) := \tau^{\alpha\otimes \rho}(Y\sm \nu K) \in \Q(F)/N(F).\]
\end{enumerate}
\end{definition}

\begin{proposition}\label{prop:concordance2}
Let $Y$ be a closed oriented 3--manifold.
Let $K_0, K_1$ be two concordant knots in $Y$.
Then there exists an isomorphism $\phi\colon  H_1(Y\sm \nu K_1;\Z/2\Z) \xrightarrow{\cong} H_1(Y\sm \nu K_0;\Z/2\Z)$
which sends the meridian of $K_1$ to the meridian of $K_0$ such that for any homomorphism
$\rho\colon H_1(Y\sm \nu K_0;\Z/2\Z)\to \{\pm 1\} $ the equality below holds:
\[\tau_\rho(K_0) = \tau_{\rho \circ \phi}(K_1)\in \Q(F)^\times/N(F).\]
\end{proposition}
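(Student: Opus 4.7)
The plan is to construct $\phi$ and establish the torsion identity simultaneously using the concordance exterior as a 4-dimensional cobordism. Let $A \colon S^1 \times [0,1] \hookrightarrow Y \times [0,1]$ realise the concordance and let $W := (Y \times [0,1]) \sm \nu A$ denote its exterior. Then $\partial W = X_{K_0} \cup T \cup X_{K_1}$, where $T \cong S^1 \times [0,1] \times S^1$ is the lateral surface of $\nu A$, glued to $X_{K_0}$ and $X_{K_1}$ along their respective boundary tori (with $X_{K_0}$ acquiring the reversed orientation from the outward-normal convention on $\partial W$). Write $i_j \colon X_{K_j} \hookrightarrow W$ for the inclusions.

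First I would produce $\phi$. A comparison of the long exact sequences of pairs $(Y, X_{K_j})$ and $(Y \times [0,1], W)$ -- whose relative $\Z/2\Z$-groups are each isomorphic to $H_{*-2}(S^1;\Z/2\Z)$ by excision and the Thom isomorphism, with the inclusion-induced map between them an isomorphism -- yields via the five lemma that each $(i_j)_*$ is an isomorphism on $H_1(-;\Z/2\Z)$. Both inclusions send the meridian of $K_j$ to the meridian of $A$ in $W$, so $\phi := (i_0)_*^{-1} \circ (i_1)_*$ is the desired meridian-preserving isomorphism. Now define $\tilde\rho := \rho \circ (i_0)_*^{-1}$, let $\tilde\alpha$ be the composition $H_1(W) \to H_1(Y) \twoheadrightarrow F$, and set $\tilde\Phi := \tilde\alpha \otimes \tilde\rho$; then $\tilde\Phi \circ (i_0)_* = \alpha \otimes \rho$ and $\tilde\Phi \circ (i_1)_* = \alpha \otimes (\rho \circ \phi)$. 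Since $\rho$ is meridional, $\tilde\Phi$ sends the meridian of $A$ to $-1$, and as this meridian generates an $S^1$-factor of each of the three tori $T$, $\partial X_{K_0}$ and $\partial X_{K_1}$, the representation $\tilde\Phi$ is non-trivial on each, so their $\tilde\Phi$-coefficient homology vanishes.

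For the torsion identity, I would apply the standard cobordism-invariance of twisted Reidemeister torsion modulo norms. A Mayer--Vietoris calculation on $\partial W$, with the torus vanishings above turning the Mayer--Vietoris correction terms into norms (cf.\ the proof of Proposition~\ref{prop:glueing}) and using that orientation reversal on a 3-manifold conjugates the torsion, yields
\[
\tau^{\tilde\Phi}(\partial W) \equiv \overline{\tau^{\alpha\otimes\rho}(X_{K_0})} \cdot \tau^{\alpha\otimes(\rho\circ\phi)}(X_{K_1}) \pmod{N(F)}.
\]
On the other hand, multiplicativity of torsion in the pair $(W,\partial W)$ combined with involution-equivariant Poincar\'{e}--Lefschetz duality for the 4-manifold $W$ identifies $\tau^{\tilde\Phi}(\partial W)$ with a product of the form $\tau^{\tilde\Phi}(W)\cdot\overline{\tau^{\tilde\Phi}(W)}$ modulo norms -- hence $\tau^{\tilde\Phi}(\partial W) \in N(F)$. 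Combining these and invoking the identity $\overline{\tau}\equiv \tau^{-1} \pmod{N(F)}$ (an immediate consequence of $q\overline{q} \in N(F)$), one concludes $\tau_\rho(K_0) \equiv \tau_{\rho\circ\phi}(K_1)$ in $\Q(F)^\times/N(F)$.

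The hard part will be this last step. The chain-level Poincar\'{e}--Lefschetz duality on the concordance exterior must be carried out in a way that is compatible with the self-dual bases used to define the torsion on the 3-manifold ends, and all Mayer--Vietoris and long-exact-sequence correction terms must be identified as elements of $N(F)$ with careful attention to orientation conventions. This is a natural but somewhat intricate 4-dimensional analogue of the calculation at the end of the proof of Proposition~\ref{prop:glueing}.
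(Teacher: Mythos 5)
Your setup (the concordance exterior $W = Y\times I \sm \nu A$, the inclusion-induced isomorphisms on $H_1$, the extension $\tilde\Phi = \tilde\alpha\otimes\tilde\rho$ over $W$, and the reduction of the proposition to showing $\tau^{\tilde\Phi}(\partial W)\in N(F)$) is exactly the paper's strategy. The gap is at the step you yourself flag as ``the hard part'': it is not true in general that Poincar\'e--Lefschetz duality plus multiplicativity identifies $\tau^{\tilde\Phi}(\partial W)$ with $\tau^{\tilde\Phi}(W)\cdot\overline{\tau^{\tilde\Phi}(W)}$ modulo norms. Since $H_*(W;\tilde\Phi)$ and $H_*(\partial W;\tilde\Phi)$ need not vanish, the multiplicativity formula for $(W,\partial W)$ carries a correction term, namely the torsion of the homology long exact sequence of the pair, and duality identifies this term (up to norms) with the determinant of the nonsingular $\tilde\Phi$--twisted intersection pairing induced on the image of $H_2(W;\tilde\Phi)\to H_2(W,\partial W;\tilde\Phi)$. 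The determinant of a Hermitian form over $\Q(F)$ is \emph{not} automatically an element of $N(F)$ (e.g.\ a rank-one form $\langle g\rangle$ with $g$ symmetric irreducible), so ``hence $\tau^{\tilde\Phi}(\partial W)\in N(F)$'' does not follow from duality bookkeeping alone, however carefully the bases and orientations are tracked. This is precisely the content of \cite[Theorem~2.4]{MR3062861}, which the paper invokes: the conclusion requires verifying the hypothesis that the relevant twisted intersection form vanishes.

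The missing idea is where the sign-twisted nature of the representation enters --- a hypothesis your argument never uses beyond meridional non-triviality, which is a warning sign. Because $\rho$ takes values in $\{\pm 1\}$ (a $2$--group) and the inclusions $X_{K_j}\hookrightarrow W$ are homology equivalences, a Levine/Casson--Gordon style argument (\cite[Lemma~3.3]{MR3062861}) shows that these inclusions induce isomorphisms on $(\tilde\alpha\otimes\tilde\rho)$--twisted homology as well; hence $H_*(W,X_{K_0};\tilde\Phi)=0$, the module supporting the intersection pairing is trivial, and only then does \cite[Theorem~2.4]{MR3062861} give $\tau^{\tilde\Phi}(\partial W)\in N(F)$. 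Once that is in place, your final manipulation (using $\overline{\tau}\equiv\tau^{-1}$ modulo $N(F)$, or equivalently the paper's use of $\tau=\overline{\tau}$ for the self-dual torsion of the $3$--manifold pieces) correctly yields $\tau_\rho(K_0)=\tau_{\rho\circ\phi}(K_1)$. So: same route as the paper, construction of $\phi$ fine, but the key vanishing statement is asserted rather than proved, and proving it requires the $2$--group/twisted-homology-equivalence input that your sketch omits.
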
	

\begin{proof}
Let $K_j \subset Y$, $j=0,1$, be two concordant knots and let $A\subset Y \times I$ be an annulus witnessing this.
For any concordance $A$ between $K_0$ and $K_1$, a Mayer--Vietoris argument shows that the inclusion induced map
$H_1(Y\sm \nu K_j;\Z)\xrightarrow{\cong} H_1(Y\times[0,1] \sm \nu A;\Z)$
is an isomorphism.

Denote $Y\times[0,1] \sm \nu A$ by $W$. We obtain the following commutative diagram
\[
\xymatrix{H_1(Y\sm K_0;\Z/2\Z)\ar[d]_\cong &\ar[l] H_1(Y\sm K_0;\Z)\ar[dr]^{\alpha_0} \ar[d]_\cong\\
H_1(W;\Z/2\Z)&\ar[l]H_1(W;\Z)\ar[r]&F\\
 \ar@/^6pc/[uu]^{\varphi}H_1(Y\sm K_1;\Z/2\Z)\ar[u]^\cong &\ar[l]H_1(Y\sm K_1;\Z)\ar[ur]_{\alpha_1} \ar[u]^\cong}
\]
where $\varphi$ is defined by composition of the given two vertical isomorphisms.
Note that the two inclusion maps send the meridian of the knot to the meridian of the annulus, in particular $\phi$  sends the meridian of $K_1$ to the meridian of $K_0$.

The representation $\alpha \otimes \rho\colon \pi_1( Y \sm \nu K_0) \to \Q(F)$
can be extended to $W$ as follows: define $\alpha_W \colon H_1(W;\Z) \to FH_1(Y\times I;\Z)=F$ induced
by filling in the annulus~$A$. With the
isomorphism~$H_1(Y\sm \nu K_0;\Z/2\Z)\xrightarrow{\cong} H_1(W;\Z/2\Z)$, we extend
$\rho$ over $W$ to $\rho_W \colon H_1(W;\Z/2\Z) \to \{\pm 1\}$. By the diagram, it
restricts to $\rho\circ \phi$ on $Y\sm \nu K_1$.

The boundary of $W$ is $\partial W = Y\sm \nu K_0 \cup_{T^2} Y\sm \nu K_1$.
Note that both inclusions $Y\sm \nu K_i \subset W$ induce homology equivalences. As our representation $\rho_W$ is to a 2--group, we may use \cite[Lemma 3.3]{MR3062861} to conclude that the (equivariant) intersection form of $W$, with $(\alpha_W\otimes\rho_W)$--coefficients vanishes (indeed, the underlying module is trivial). This claim allows us to use \cite[Theorem 2.4]{MR3062861} to conclude that the torsion~$\tau^{\alpha_W\otimes \rho_W}(\partial W) \in N(F)$ is contained in the norm subgroup.

Use the multiplicativity of  Reidemeister torsion corresponding to decompositions of spaces and
the fact that $\tau_\rho(T^2)$ vanishes as the representation $\rho_W$
is non-trivial on the meridian of $A$~\cite[Lemma 11.11]{MR1809561},
to obtain the equation:
\[ \tau^{\alpha_W\otimes \rho_W}(\partial W) = \tau_\rho(Y\sm \nu K_0) \cdot \tau_{\rho\circ \phi}(Y\sm \nu K_1).\]

Multiply the equation above
with $\tau_{\rho\circ \phi}(Y\sm \nu K_1)^{-1}\ol{\tau_{\rho\circ \phi}(Y\sm \nu K_1)^{-1}}$.
We then use the fact that $\tau_{\rho\circ \phi}(Y\sm \nu K_1) = \ol{\tau_{\rho\circ \phi}(Y\sm \nu K_1)}$ ---
\cite[Corollary II.14.2]{MR1809561} and \cite[Lemma~2.5]{MR3062861} ---
to obtain the desired equality
\[ \tau_\rho(Y\sm \nu K_0) \,\,=\,\, \tau_{\rho\circ \phi}(Y\sm \nu K_1) \in \Q(F)/ N(F).\]
\end{proof}

The self-dual torsion also behaves well with respect to local knotting.
Recall that to a knot~$J$ in $S^3$, we can associate a well-framed winding
number one pattern by removing a meridian such that $P_J(K) = K \# J$, see
Proposition~\ref{prop:PatternToSum}.

\begin{proposition}\label{prop:mutorsion}
Let $Y$ be a closed oriented 3--manifold. Denote $F:= FH_1(Y;\Z)$.
Let $K$ be a knot in $Y$ such that $[K]\in H_1(Y;\Z)$ is non-torsion
and $J$ a knot in $S^3$ with corresponding pattern $P$. Pick
neighbourhoods~$\nu P(K) \subset \nu K$.
Let
\[ \rho\colon H_1(Y\sm \nu (K\# J))=H_1(Y\sm \nu P(K);\Z/2\Z)\to \{\pm 1\}\] be a homomorphism
which is non-trivial on the meridian. Denote
\[ \rho' \colon H_1(Y \sm \nu K;\Z/2\Z) \xrightarrow{\cong} H_1(Y\sm \nu P(K);\Z/2\Z) \xrightarrow{\rho} \Z/2\Z, \]
which is induced by the inclusion.
Then
\[\tau_\rho(K \# J) \,\,=\,\, \tau_{\rho'}(K) \in \Q(F)^\times/N(F).\]
\end{proposition}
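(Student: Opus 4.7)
The plan is to combine the gluing formula of Proposition~\ref{prop:glueing} with the explicit pattern torsion computation of Proposition~\ref{prop:tors}, and then observe that the satellite factor lies in the norm subgroup $N(F)$. By Proposition~\ref{prop:PatternToSum}, $K \# J = P_J(K)$ for the well-framed winding number one pattern $P_J$, giving the decomposition
\[Y \sm \nu(K \# J) \;=\; (Y \sm \nu K) \cup_{T^2} (S^1 \times D^2 \sm \nu P_J).\]
The character $\phi := \alpha \otimes \rho$ sends the meridian to $-1$, so $\phi \not\equiv 1$ and its restriction to $\partial \nu P(K)$ is non-trivial, confirming the hypotheses of Proposition~\ref{prop:glueing}. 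Under the inclusion $Y \sm \nu K \hookrightarrow Y \sm \nu P_J(K)$, the restriction of $\phi$ is by definition $\alpha \otimes \rho'$, so the torsion of the left-hand piece is precisely $\tau_{\rho'}(K)$. The gluing formula therefore yields
\[\tau_\rho(K \# J) \;=\; \tau_{\rho'}(K) \cdot \tau^\phi(S^1 \times D^2 \sm \nu P_J) \in \Q(F)^\times/N(F),\]
and it suffices to show the second factor lies in $N(F)$.

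By Proposition~\ref{prop:tors}, this factor equals $\Delta_{L(P_J)}(h(s), h(t))$, where $s$ is the meridian of $P_J$ and $t$ is the longitude of $S^1 \times D^2$; by Lemma~\ref{lem:alexander-polynomial-of-pj}, $\Delta_{L(P_J)}(s,t) \doteq \Delta_J(s)$ up to units of $\Z[s^{\pm 1}, t^{\pm 1}]$. The key evaluation is that of $h(s)$. Under the inclusion into $Y \sm \nu P_J(K)$, the loop $s$ maps to a meridian of $K \# J$, which by Lemma~\ref{lem:meridian-winding-number-1} is homologous in $\nu K \sm \nu P(K)$ to the meridian $\mu$ of $K$. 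Since $[K]$ is non-torsion in $H_1(Y;\Z)$, Proposition~\ref{Prop:CurveDuality}(1) forces $\mu$ to be torsion in $H_1(Y \sm \nu K;\Z)$, so $\alpha(\mu) = 1 \in F$. Combined with $\rho(\mu) = -1$, this yields $h(s) = -1$.

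Consequently $\Delta_J(h(s)) = \Delta_J(-1)$ is the determinant of $J$, a non-zero odd integer, hence an element of $\Q^\times \subset N(F)$. The unit ambiguity from Lemma~\ref{lem:alexander-polynomial-of-pj} contributes only a factor of the form $\pm h(s)^a h(t)^b \in \pm F \subset N(F)$, since $h(t)$ lies in $\pm F$ (the well-framed longitude of $P_J$ maps to $[K]$ in $H_1(Y)$, which evaluates to an element of $F$). Therefore $\tau^\phi(S^1 \times D^2 \sm \nu P_J) \in N(F)$, which completes the argument. The main bookkeeping hurdle is matching the various representations across the gluing and identifying meridians via Lemma~\ref{lem:meridian-winding-number-1}; once this is clear, the substantive content is the observation that non-torsionness of $[K]$ forces $\alpha(\mu) = 1$, so that the satellite factor collapses to a non-zero rational and is absorbed into $N(F)$.
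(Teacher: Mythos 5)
Your proposal is correct and follows essentially the same route as the paper: decompose the exterior of $K\#J$ via the satellite pattern $P_J$, apply the gluing formula of Proposition~\ref{prop:glueing}, identify the pattern factor with $\Delta_J$ evaluated on the image of the meridian (Proposition~\ref{prop:tors} and Lemma~\ref{lem:alexander-polynomial-of-pj}), and use Lemma~\ref{lem:meridian-winding-number-1} together with Proposition~\ref{Prop:CurveDuality} to conclude that this evaluation is $\Delta_J(\pm 1)$, a non-zero rational lying in $N(F)$. Your explicit handling of the unit ambiguity via $h(t)\in\pm F$ is a slight elaboration of what the paper leaves implicit, but the argument is the same.
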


\begin{proof}
The inclusion induced map $H_1(Y \sm \nu K;\Z/2\Z) \xrightarrow{\cong} H_1(Y\sm \nu P(K);\Z/2\Z)$
is an isomorphism as $\nu K \sm \nu P(K)$ is a homology bordism, as shown in Lemma~\ref{lem:hbordism}.

Let $\alpha \colon H_1(Y\sm \nu P(K);\Z) \to F$ be the homomorphism induced by the inclusion.
Let $s$ be the meridian of~$P(K)$. We compute
\begin{align*}
\tau_{\rho}(Y\sm\nu  P(K)) &= \tau_{\rho'}(Y\sm\nu  K) \cdot \tau_{\rho}(\nu K \sm\nu  P(K))\\
& = \tau^{\alpha\otimes \rho'}(Y\sm \nu K)\cdot \Delta_J( (\alpha\otimes \rho)(s) ) \in \Q(F)^\times/N(F),
\end{align*}
where the first equality follows from Proposition~\ref{prop:glueing} and
the second from Lemma~\ref{lem:alexander-polynomial-of-pj}.

We make the following observations:
\begin{enumerate}
\item The character $\rho$ takes values in $\pm 1 \in \Q$ and the map $\alpha$ takes values in $F$, thus
$\alpha\otimes (\rho'\circ \phi)(s)$ is of the form $\pm f$ with $f\in F$.
\item By Lemma~\ref{lem:meridian-winding-number-1}, the homology class of the meridian~$s$
in $H_1(Y\sm \nu P(K);\Z)$ equals to the one of the meridian of $K$.
We had assumed that $[K]$ has infinite order in homology. It follows from
Proposition~\ref{Prop:CurveDuality} that $\alpha(s)$ is trivial in $F$.
In particular, we see that $(\alpha\otimes \rho)(s) = \pm 1$.
It is well-known that for the Alexander polynomial of a knot $J$,
the integer~$\Delta_J(\pm 1)$ is odd, in particular non-zero.
\end{enumerate}
Summarising, $\Delta_J(\alpha\otimes \rho)(s) \in \Q^\times \subset N(F)$.
This concludes the proof of the proposition.
\end{proof}

\begin{corollary}\label{cor:TorsionInvariant}
Let $K$ be a knot in a closed, oriented 3--manifold $Y$ such that $[K]\in H_1(Y;\Z)$ is non-torsion. Let $\mathfrak{C}(K)$ be the set of
homomorphisms~$H_1(Y\sm \nu K;\Z/2\Z)\to \{\pm 1\}$ which are non-trivial on the meridian.
Then the set of self-dual torsions
\[ I_K = \{ \tau_\rho(K) \mid \rho \in \mathfrak{C}(K)\} \subset \Q(F)/N(F)\]
is an almost-concordance invariant.
\end{corollary}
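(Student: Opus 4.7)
The plan is to show that $I_K$ is unchanged both under concordance and under local connect-summing with a knot in $S^3$; by Definition~\ref{def:local}, these two operations together generate the almost-concordance equivalence relation, so invariance of $I_K$ under almost-concordance follows. The main ingredients are already available: Proposition~\ref{prop:concordance2} handles concordance invariance, and Proposition~\ref{prop:mutorsion} handles invariance under local knotting. What remains is essentially just to check that these propositions interact correctly with the \emph{set} $I_K$, i.e.\ that the various identifications of $\Z/2\Z$--homologies restrict to bijections between the respective sets of meridional characters.

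First I would handle the concordance step. Suppose $K_0$ and $K_1$ are concordant. Proposition~\ref{prop:concordance2} yields an isomorphism
\[ \phi\colon H_1(Y\sm \nu K_1;\Z/2\Z)\xrightarrow{\cong} H_1(Y\sm \nu K_0;\Z/2\Z) \]
sending meridian to meridian, and satisfying $\tau_\rho(K_0)=\tau_{\rho\circ\phi}(K_1)$ for every homomorphism $\rho\colon H_1(Y\sm \nu K_0;\Z/2\Z)\to\{\pm 1\}$. Because $\phi$ preserves the meridional class, pullback by $\phi$ induces a bijection $\mathfrak{C}(K_0)\to \mathfrak{C}(K_1)$, $\rho\mapsto \rho\circ\phi$. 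The torsion identity then says precisely that this bijection matches up the two indexed collections of self-dual torsions element by element, so $I_{K_0}=I_{K_1}$ as subsets of $\Q(F)^\times/N(F)$.

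Next I would handle the local knotting step. Given $J\in \mathcal{C}$, Proposition~\ref{prop:mutorsion} provides an inclusion-induced isomorphism
\[ \iota_*\colon H_1(Y\sm \nu K;\Z/2\Z)\xrightarrow{\cong} H_1(Y\sm \nu (K\# J);\Z/2\Z),\]
and for any meridional $\rho$ on $Y\sm \nu(K\# J)$ the pullback $\rho':=\rho\circ\iota_*$ satisfies $\tau_\rho(K\# J)=\tau_{\rho'}(K)$. Since $\iota_*$ sends the meridian of $K$ to the meridian of $K\# J$ (this is part of Lemma~\ref{lem:meridian-winding-number-1} applied to the winding number one pattern $P_J$), the correspondence $\rho\leftrightarrow\rho'$ is again a bijection $\mathfrak{C}(K\# J)\to \mathfrak{C}(K)$, and Proposition~\ref{prop:mutorsion} then shows $I_{K\# J}=I_K$.

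Combining the two steps, if $K_0$ is almost-concordant to $K_1$, then there is some $J\in \mathcal{C}$ with $K_0$ concordant to $K_1\# J$; applying the first step gives $I_{K_0}=I_{K_1\# J}$, and the second step gives $I_{K_1\# J}=I_{K_1}$, so $I_{K_0}=I_{K_1}$. There is no genuine obstacle here: both propositions have been proved, and the only thing to verify is the book-keeping that the bijections on meridional characters behave as claimed, which is immediate from the fact that in each case the relevant isomorphism of $\Z/2\Z$--homologies is inclusion-induced and hence preserves the meridian.
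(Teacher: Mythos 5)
Your proposal is correct and follows exactly the route the paper intends: the corollary is stated as an immediate consequence of Proposition~\ref{prop:concordance2} (concordance invariance) and Proposition~\ref{prop:mutorsion} (invariance under local knotting), with the only bookkeeping being that the meridian-preserving isomorphisms on $\Z/2\Z$--homology induce bijections between the sets of meridional characters, which you verify. Nothing is missing.
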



\section{Changing the almost-concordance class using satellites}\label{sec:changing}

In this section we apply our almost-concordance invariants; using a satellite
construction to modify certain knots within their free homotopy classes, we will
produce infinite families of examples that serve to confirm Conjecture~\ref{conj:main} in many cases.
For our particular satellite constructions, we will make use of the
patterns $P_n$, $n\geq 1$, with winding number one, shown in Figure~\ref{fig:mazur}.
\begin{figure}[h]
\labellist
\pinlabel $n$ [l] at 265 234
\endlabellist
\includegraphics[width=0.6\textwidth]{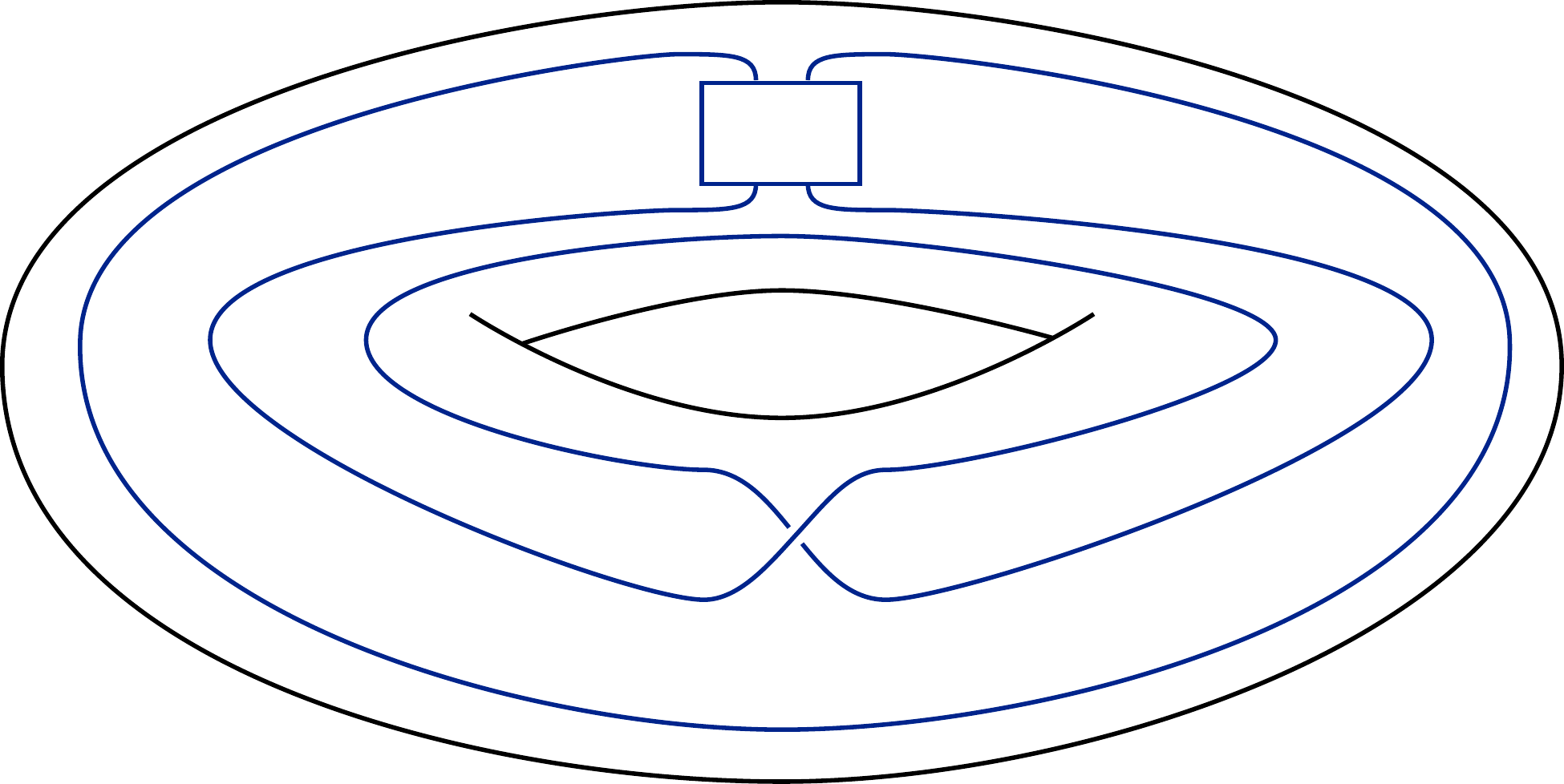}
\caption{The Mazur patterns $P_n$, where the $n$--box denotes $n$ full right-handed twists.}
\label{fig:mazur}
\end{figure}

Following Cooper \cite{Cooper82} and Cimasoni--Florens~\cite{MR2357695},
we compute the multivariable Alexander polynomials of
$P_n$ using C--complexes.
Recall that a C--complex for an $m$--component link $L$ consists of a choice of
Seifert surface~$F_j$ for each link component~$L_j$, where the surfaces are
allowed to intersect one another, but only in clasp singularities as depicted in
Figure~\ref{fig:clasps}.
\begin{figure}[h]
\includegraphics[width=0.3\textwidth]{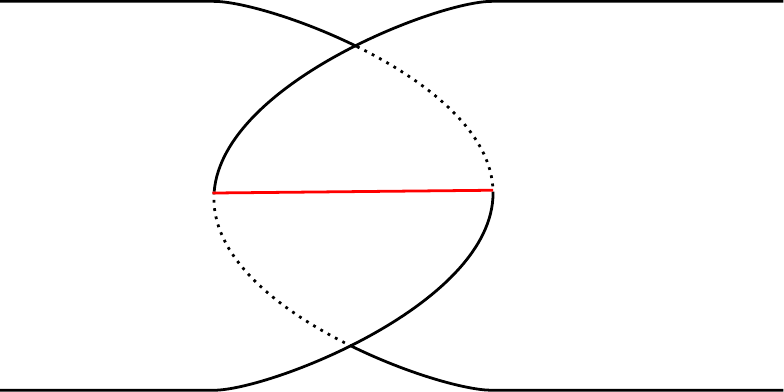}
\caption{Clasp singularities between two surfaces.}
\label{fig:clasps}
\end{figure}
In particular,
there are no triple intersections. The Seifert
form of the C--complex is defined to have underlying $\Z$--module $H_1(F;\Z)$,
where $F$ is the union of the surfaces~$F_j$. To describe the Seifert pairing
on this module, we will first pick a normal direction for each component $F_j$.
The ways an embedded curve in $F$ can be pushed into the complement $S^3\sm F$
are then encoded as a choice of function
$\eps\colon \{1,\dots, N\} \to \{0, 1\}$, where $\eps(j)=0$ and $\eps(j)=1$ stand for negative and positive
push-offs respectively from the component $F_j$. Denote the resulting push-off for
an embedded curve $x\subset F$, by $i_\eps x \subset S^3 \sm F$. Now
define a pairing on $H_1(F;\Z)$ via the formula
\[ \beta(x,y) := \sum_{\eps} (-1)^{|\eps|} \lk(i_\eps x, y) X^\eps \in \Z[X_1,\dots, X_N],\]
where $|\eps| := \sum_j \eps(j)$ and $X^\eps := \prod_j X^{\eps(j)}_j$.

The complement of a standard solid torus in $S^3$ is a
neighbourhood of an unknot $c$, so when considering the 2--component link $L(P)$
in $S^3$ arising from a winding number~$1$ pattern $P\subset S^1\times D^2$ and
this unknot $c$, we always write $s$ for the meridian of $P$
and $t$ for the meridian of $c$, see Section~\ref{subsec:AlexanderPolynomial}.

\begin{proposition}\label{prop:calculate}

For $n\geq 1$, the Mazur pattern $P_n$ has multivariable Alexander polynomial
\[ \Delta_{L(P_n)}(s,t)=n(s^2t+st^2-s^2-t^2+s+t)-(2n-1)st \in \Z[s^{\pm 1},t^{\pm 1}].\]
\end{proposition}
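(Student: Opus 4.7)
The plan is to exhibit an explicit C--complex $F=F_1\cup F_2\subset S^3$ for $L(P_n)=P_n\cup c$, and then invoke the Cimasoni--Florens determinant formula to read off $\Delta_{L(P_n)}(s,t)$.

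First I would set $F_2$ to be the obvious meridional disc bounded by the unknot $c$, and construct $F_1$ as a Seifert surface for the Mazur pattern $P_n$ sitting inside $S^1\times D^2$ together with the twist region. Since $P_n$ has winding number one, a natural choice is an annulus (or a once-punctured torus, depending on the framing conventions) carrying the $n$ full right-handed twists in a band, plus a single band that produces the clasp of the Mazur pattern. The surfaces $F_1$ and $F_2$ will meet in two clasp singularities, coming from the two places where $P_n$ punctures the meridional disc of $c$ in the standard Mazur picture. After an isotopy this is a bona fide C--complex.

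Next I would compute $H_1(F;\Z)$ by a Mayer--Vietoris/clasp count: with $F_2$ a disc and $F_1$ having the above description, $H_1(F;\Z)$ has rank two, with generators a cycle $x$ supported in the twisted band of $F_1$ and a cycle $y$ that uses the clasp intersections to cross between $F_1$ and $F_2$. After choosing normal directions on each $F_j$, I compute the four push-offs $i_\varepsilon$ and the linking numbers $\lk(i_\varepsilon u, v)$ for $u,v\in\{x,y\}$ and $\varepsilon\in\{0,1\}^2$. The $n$ full twists contribute a factor of $n$ to the self--linking of $x$; the clasps contribute the only non-trivial terms with $\varepsilon\ne(0,0)$. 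Assembling the Cimasoni--Florens generalised Seifert matrix
\[ A(s,t) = \sum_{\varepsilon} (-1)^{|\varepsilon|} \beta_\varepsilon\, s^{\varepsilon_1} t^{\varepsilon_2}, \]
the theorem of \cite{MR2357695} then gives
\[ \Delta_{L(P_n)}(s,t) \doteq \det\bigl(A(s,t)\bigr), \]
up to the standard normalisation factor $(s-s^{-1})^{1-\chi(F_1)}(t-t^{-1})^{1-\chi(F_2)}$, which in our case is trivial.

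Finally I would expand the $2\times 2$ determinant and clear denominators. The self-linking entry contributes the $-(2n-1)st$ term after symmetrising, and the mixed entries (which involve both the clasp linkings and one push-off through the twist region) contribute the symmetric polynomial $n(s^2t+st^2-s^2-t^2+s+t)$. The main obstacle is bookkeeping: choosing consistent normal directions and orientations for the bands so that the signs of the clasps, the contribution of the twist region, and the final normalisation of $\Delta_{L(P_n)}$ all match; a sanity check at $n=1$ (where $P_1$ is the classical Mazur pattern, with known Alexander polynomial $s+t+st^{-1}+s^{-1}t-st -1$ up to units) and at $s=1$ or $t=1$ (where the Torres condition forces $\Delta_{L(P_n)}(1,t)=1$ since $c$ is unknotted and the linking number is one) will confirm the formula.
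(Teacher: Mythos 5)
Your plan is essentially the paper's own proof: the paper also builds a C--complex for $L(P_n)=P_n\cup c$ with two generators of $H_1(F;\Z)$, computes the generalised Seifert form entry by entry (the twist box contributing the factor $n$, the clasps the mixed terms), applies the Cimasoni--Florens determinant formula \cite[Corollary 3.6]{MR2357695} in $\Z[s^{\pm1},t^{\pm1},(1-s)^{-1},(1-t)^{-1}]$, and then removes the unit/normalisation ambiguity exactly as you suggest, by evaluating at $s=t=1$ via the Torres condition. One caution: the $n=1$ value you quote, $s+t+st^{-1}+s^{-1}t-st-1$, is not the Mazur link's Alexander polynomial (it fails the Torres condition you invoke, since both components are unknots and the linking number is one); the stated formula at $n=1$ gives, up to units, $s+t+s^{-1}+t^{-1}-st^{-1}-s^{-1}t-1$, so calibrate your signs against the $s=1$ and $t=1$ specialisations rather than against that recalled polynomial.
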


\begin{proof}
Consider the C--complex with the generators of $H_1(F;\Z)$ as sketched in Figure~\ref{fig:StDiagramMazur}.
\begin{figure}[h]
\labellist
\pinlabel $s$ [l] at 220 75
\pinlabel $t$ [l] at 460 70
\pinlabel $e_0$ [l] at 20 395
\pinlabel $e_1$ [l] at 530 210
\pinlabel $n$ [l] at 288 395
\endlabellist
\includegraphics[width=0.7\textwidth]{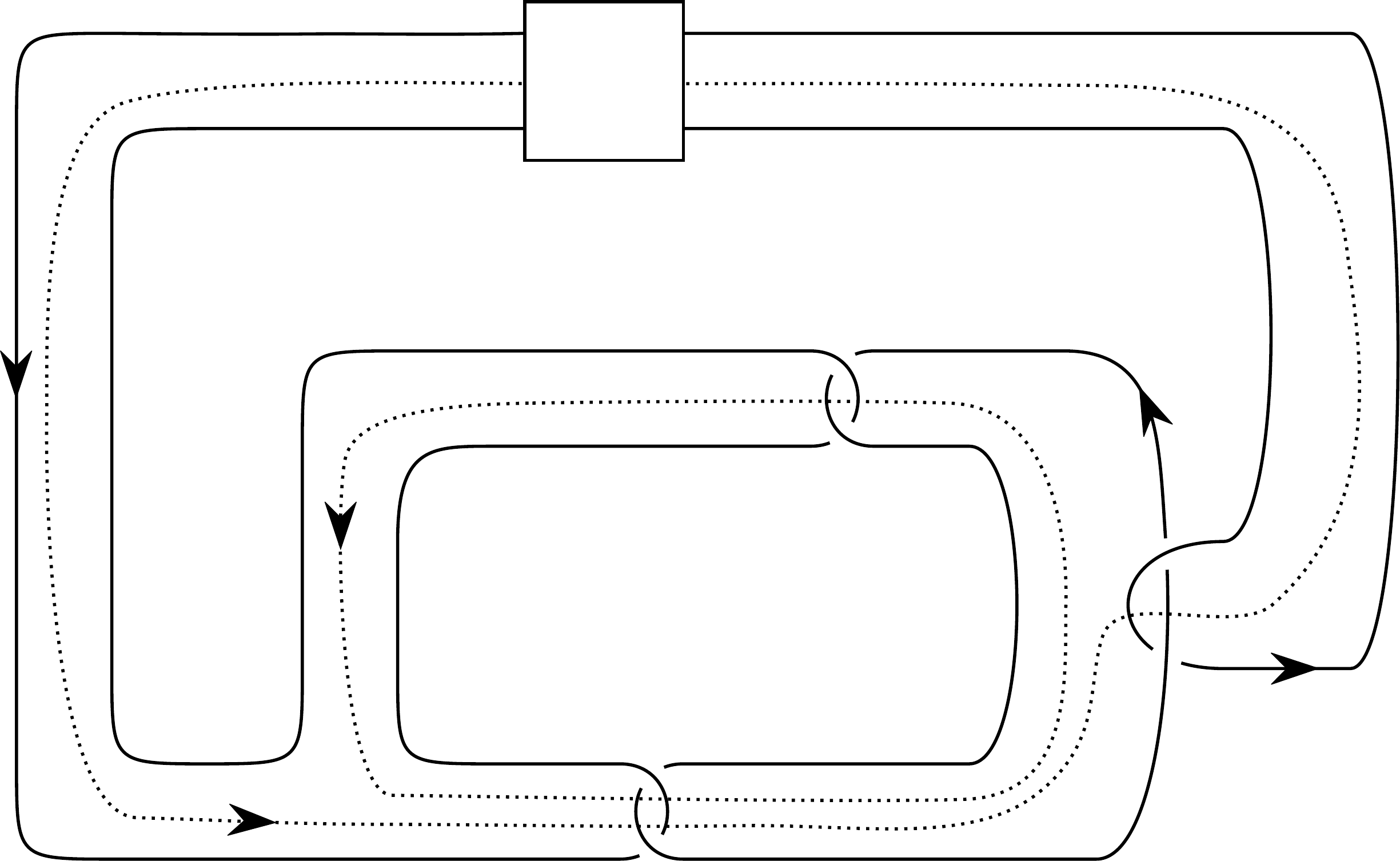}
              \caption{Diagram for $C$--complex of the Mazur pattern~$P_n$.}
	       \label{fig:StDiagramMazur}
\end{figure}

For the self-intersection, we read off the contributions and obtain:
\begin{align*}
\beta(e_0, e_0)
&= n(1-t)(1-s) + \big({-\tmfrac{1}{2}}\big)(s+t) + \tmfrac{1}{2}(s+t)\\
&= n(1-t)(1-s),\\
\intertext{and}
\beta(e_1, e_1)
&= \big({-\tmfrac{1}{2}}\big)(s+t) + \big({-\tmfrac{1}{2}}\big)(s+t)\\
&= -(s+t).
\end{align*}
The value of $\beta(e_0, e_1)$ can be computed as follows
\[ \beta(e_0, e_1)
= \tmfrac{1}{2}(1-s) - \tmfrac{1}{2}(s+t) + \big({-\tmfrac{1}{2}}\big)(1-t) = -s. \]
Consequently, we also obtain $\beta(e_1, e_0) = -t$.
We can compute the multivariable Alexander polynomial~$\Delta_{L(P_n)}(s,t)$
in the ring~$\Z[s^{\pm 1}, t^{\pm 1}, (1-s)^{-1}, (1-t)^{-1}]$
as the determinant~\cite[Corollary 3.6]{MR2357695}
\[ \det \begin{pmatrix} n(1-t)(1-s) & -s \\ -t & -(s+t) \end{pmatrix} = -n ( s^2 t + st^2 - s^2 -t^2 + s +t) +
	(2n-1) st.\]
We must now show that the equality holds moreover in $\Z[s^{\pm 1}, t^{\pm 1}]$.
The link $L(P_n) \subset S^3$ has two components and we denote the component
corresponding to $P_n$ with $\ol P$.
From Lemma~\ref{lem:alexander-polynomial-of-pj}, we deduce that
$ | \Delta_{L(P_n)}(1,1) | = | \Delta_{\ol P}(1)| = 1$.
As also \[ | -n ( 1 + 1 - 1 - 1 + 1 +1)+ (2n-1)| = 1,\] we obtain that in $\Z[s^{\pm 1}, t^{\pm 1}]$
we have
\[ \Delta_{L(P_n)}(s,t)= -n(s^2t+st^2-s^2-t^2+s+t)+(2n-1)st.\]
\end{proof}

For $F$ a finitely generated torsion-free abelian group, we call two non-zero $g, h \in \Z[F]$
\emph{associates} if $g = f \cdot h$ for a unit $f \in \Z[F]^\times$.

\begin{lemma}\label{lem:alexander-polynomials-of-patterns}
Let $F$ be a finitely generated torsion-free abelian group and let $u$ be a primitive element in $F$.
Then for any prime $p\neq 2$ the polynomial $\Delta_{P_p}(-1,u^2)$ is irreducible in $\Z[F]$. Furthermore for two different primes $p$ and $q$, the elements
$\Delta_{P_p}(-1,u^2)$ and $\Delta_{P_q}(-1,u^2)$ in $\Z[F]$ are non-associates.
\end{lemma}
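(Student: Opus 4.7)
The plan is to first compute $\Delta_{P_p}(-1, u^2)$ using Proposition~\ref{prop:calculate}. Substituting $s = -1$ and $t = u^2$ into the formula for $\Delta_{L(P_n)}(s,t)$ yields, up to a sign, the Laurent polynomial
\[h_n(u) := 2nu^4 - (4n-1)u^2 + 2n \in \Z[u^{\pm 1}].\]
Since $u$ is primitive, I extend it to a $\Z$-basis $u, v_1, \ldots, v_k$ of $F$ and identify $\Z[F] \cong R[v_1^{\pm 1}, \ldots, v_k^{\pm 1}]$ with $R := \Z[u^{\pm 1}]$. Because $R$ is an integral domain, the Newton polytope of a product of Laurent polynomials in the $v_i$ is the Minkowski sum of the Newton polytopes of the factors. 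Consequently, any factorization of $h_p \in R$ inside $\Z[F]$ is, up to monomial units $\pm v^\alpha$, a factorization inside $R$, and two elements of $R$ are associates in $\Z[F]$ if and only if they differ by a unit of the form $\pm u^k v^\alpha$. Both claims therefore reduce to corresponding statements inside $R$; since $h_p(0) = 2p \neq 0$ one may even work inside $\Z[u]$.

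The key technical step is proving $h_p$ is irreducible in $\Z[u]$ for $p$ an odd prime. Setting $g_p(w) := 2pw^2 - (4p-1)w + 2p$, so that $h_p(u) = g_p(u^2)$, I first observe that $g_p$ is irreducible in $\Z[w]$: its content is $\gcd(2p, 4p-1) = 1$ and its discriminant $1-8p$ is negative, so it has no rational roots. To lift this to $h_p$, suppose for contradiction that $h_p = A \cdot B$ is a nontrivial factorization in $\Z[u]$. Since $g_p$ has no rational roots neither does $h_p$, so necessarily $\deg A = \deg B = 2$. Using the evenness $h_p(-u) = h_p(u)$ and unique factorization in $\Q[u]$, one of three cases must hold: (i) both $A$ and $B$ are even in $u$, producing a factorization of $g_p$ and contradicting its irreducibility; (ii) both are odd in $u$, forcing $u \mid h_p$ and contradicting $h_p(0) = 2p$; or (iii) $B(u) = \pm A(-u)$. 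In case (iii), writing $A(u) = au^2 + bu + d$ and expanding $A(u) \cdot (\pm A(-u))$ shows the leading coefficient equals $\pm a^2$, so $a^2 = \pm 2p$; for $p$ an odd prime, $2p$ is not a perfect square, eliminating this case.

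For the non-associates claim, suppose $h_p = c \cdot h_q$ with $c \in \Z[F]^\times = \pm F$. Since $h_p$ and $h_q$ have no $v_i$ dependence, comparing $v$-exponents forces $c = \pm u^k$. The supports of $h_p$ and $u^k h_q$ in the $u$-direction are respectively $\{0, 2, 4\}$ and $\{k, k+2, k+4\}$, which forces $k = 0$. Comparing the constant coefficients of $h_p$ and $\pm h_q$ then yields $p = \pm q$, so $p = q$ for positive primes.

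The main obstacle is case (iii) of the irreducibility argument: ruling out a ``hidden'' factorization where the two quadratic factors are swapped by $u \mapsto -u$. It is here that the hypothesis $p \neq 2$ is essential, since the argument relies precisely on $2p$ not being a perfect square.
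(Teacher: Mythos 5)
Your argument is correct, and it reaches the same quartic $\pm\bigl(2pu^4-(4p-1)u^2+2p\bigr)$ from Proposition~\ref{prop:calculate} and the same reduction to one variable, but the key irreducibility step is handled by a genuinely different route. The paper assumes a quadratic-times-quadratic splitting $r_1r_2$ over $\Z[u]$, compares coefficients to force the near-palindromic shape $r_i=a_i(u^2+\varepsilon)+b_iu$, and then derives a contradiction by reducing mod $p$ (using $p\neq 2$ so that $p$ divides exactly one of $a_1,a_2$, while the quartic is $\mp u^2$ mod $p$). You instead exploit the evenness of the quartic: substitute $w=u^2$, kill the quadratic $g_p(w)=2pw^2-(4p-1)w+2p$ by content $1$ and negative discriminant $1-8p$, and then show any factorization of $g_p(u^2)$ not coming from a factorization of $g_p$ must (by unique factorization over $\Q$ and the symmetry $u\mapsto -u$) have the form $A(u)\cdot(\pm A(-u))$, which is ruled out because the leading coefficient $2p$ is not $\pm$ a perfect square -- this is where your use of $p\neq 2$ enters, in contrast to the paper's mod-$p$ use of it. Your route buys a cleaner conceptual picture (no solving of the paper's small Diophantine system in the $a_i,b_i,c_i$), at the cost of invoking standard Gauss-type facts: to pin the associate constant in case (iii) down to $\pm 1$ rather than an arbitrary rational you need primitivity of the factors (which follows since the quartic has content $1$), and the same content remark is what rules out a constant non-unit factor before you assert $\deg A=\deg B=2$; these are routine but worth a sentence each. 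Your Newton-polytope/Minkowski-sum reduction from $\Z[F]$ to $\Z[u^{\pm 1}]$ and the support-plus-constant-coefficient comparison for the non-associate statement are fine and play the role of the paper's elementary degree argument and its appeal to $\Z[F]^\times=\pm F$.
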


\begin{proof}
We can extend~$\{u\}$ to a basis $\{u,v_1,\dots,v_n\}$ for the torsion-free abelian group $F$.
As before we use multiplicative notation for $F$.
By Proposition~\ref{prop:calculate} we have
\[ \Delta_{P_p}(s,t)\,\,=\,\, p ( s^2 t + st^2 - s^2 -t^2 + s +t) -
	(2p-1) st.\]
Thus we have
\[ \Delta_{P_p}(-1,u^2)\,\,=\,\, p ( u^2 -u^4 -1 -u^4-1 +u^2) +
	(2p-1) u^2=-2p(u^4+1)+(4p-1)u^2.\]
Since units~$\Z[F]^\times$ are of the form $\pm f$ for $f \in F$, we obtain that
for two different primes $p$ and $q$ the elements
$\Delta_{P_p}(-1,u^2)$ and $\Delta_{P_q}(-1,u^2)$ in $\Z[F]$ are non-associates.	

Now we  show that $g(u):=-2p(u^4+1)+(4p-1)u^2$ is irreducible in $\Z[u^{\pm 1}]$.
The reader may verify that the polynomial $g(u)$ has no real roots. In particular,
it cannot have an irreducible factor over $\Z[u^{\pm 1}]$ of degree 1.
Suppose, for a contradiction, that $g(u)=r_1(u)\cdot r_2(u)$ for polynomials $r_i(u)=a_iu^2+b_iu+c_i$, $i=1,2$.

\begin{claim}
  We have $a_i=\varepsilon c_i$, for $i=1,2$ and some $\varepsilon\in\{\pm 1\}$.
\end{claim}

To see the claim, multiply out the product $r_1(u)\cdot r_2(u)$ and compare coefficients with $g(u)$.
We obtain $a_1 a_2 = -2p = c_1 c_2$ from the coefficients of $u^4$ and $1$. Observe that this implies that none of the $a_i$ nor the $c_i$ can be zero.
We also obtain $b_1a_2 + a_1b_2 =0 = b_1c_2 + b_2 c_1$, from the coefficients of $u$ and $u^3$, so that either $b_1=0$ or $a_2/a_1 = -b_2/b_1 = c_2/c_1$.  If $b_1=0$, then since $a_1 \neq 0$, we also have $b_2=0$.  But if $b_1=b_2=0$, then from comparing coefficients of $u^2$ we have $a_2c_1 + a_1c_2 = 4p-1$, and since $p$ is prime one can check the possibilities for the $a_i$ and the $c_i$ that satisfy all of $a_2c_1 + a_1c_2 = 4p-1$, $a_1 a_2 = -2p$ and $c_1c_2 = -2p$, to see that this set of equations has no solutions.  Thus $b_1 \neq 0$, and so we have the equations $a_1a_2=c_1c_2$ and $a_2/a_1 = c_2/c_1$.  Rearrange the first equation to yield $a_1/c_1=c_2/a_2$.  Rearrange the second equation and substitute, to give $a_2 = c_2 a_1/c_1 = c_2 c_2/a_2$, so $c_2^2 = a_2^2$.  Thus $c_2 = \varepsilon a_2$ for some $\varepsilon\in\{\pm 1\}$. It follows that similarly we have $c_1 = \varepsilon a_1$. This completes the proof of the claim.

By the claim, we may rearrange $r_i=a_i(u^2+\varepsilon)+b_i u$.
Considering that $a_1a_2=-2p$, we will assume (after relabelling) that $p$ divides $a_1$, and as $p\neq 2$, this means $p$ does not divide $a_2$.
The coefficient of $u^3$ in $g$ is $a_1b_2+a_2b_1=0$ and hence $p$ divides $b_1$.
This is now a contradiction as we have $p$ dividing $r_1$ and hence $g$, but on the other hand the residue of $g$ in $\Z_p[u]$ is $u^2$.
We conclude from this that $g(u)$ is irreducible in $\Z[u]$. Hence also in $\Z[u^{\pm 1}]$.

It is elementary to show that this implies that $g(u)$ is also irreducible in $\Z[F]=\Z[u^{\pm 1}][v_1^{\pm 1},\dots,v_n^{\pm 1}]$. To wit, given $R$ a unique factorisation domain, and some irreducible $q\in R$, to show $q$ is also irreducible over $R[x^{\pm1}]$ suppose that $q=r(x)\cdot s(x)$. Then by looking at the degrees we see that $r(x)$ and $s(x)$ are of the form $ax^n$, $bx^{-n}$, respectively, for some $a,b\in R$. But then $q=a\cdot b\in R$ and so one of $a,b$ is a unit in $R$, therefore one of $r,s$ is a unit in $R[x^{\pm 1}]$.
\end{proof}

Recall the parity homomorphism~$\Phi_g\colon \Q(F)^\times/ N(F) \to \Z/2\Z$ for an irreducible, symmetric and
non-constant polynomial~$g \in\Z[F]$ from Definition~\ref{defn:ExtractNorms}.

\begin{theorem}\label{thm:NonTorsionCase}
Fix $(Y,x)$ and let $K$ be a knot representing $x$. Suppose that $[x]=2u\in H_1(Y;\Z)$, for some primitive homology class $u$  of infinite order.
Then there exists an infinite set $I\subset \N$ such that for any $i\ne j$ in $I$ the knots $P_i(K)$ and $P_j(K)$ belong to  distinct almost-concordance classes within the set $\mathcal{C}_x(Y)$.
\end{theorem}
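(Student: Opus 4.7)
The strategy is to apply the self-dual torsion invariant $I_{K'}$ of Corollary~\ref{cor:TorsionInvariant} combined with the multiplicative gluing formula for the satellite operation. Since each Mazur pattern $P_n$ has winding number one, every $P_n(K)$ lies in $\mathcal{C}_x(Y)$. As a preliminary I would fix the framing of~$K$ so that the longitude $\lambda_K$ is null in $H_1(Y\sm\nu K;\Z/2\Z)$; such a framing exists by the half-lives--half-dies argument used in the proof of Proposition~\ref{Prop:CurveDuality}(\ref{item:curve-duality-2}), specialised to $p=2$. Well-frame each $P_n$ via Lemma~\ref{lem:hbordism}. Fix any meridional character $\rho\in\mathfrak{C}(K)$; by Lemma~\ref{lem:meridian-winding-number-1}, $\rho$ extends canonically to a meridional character of $P_n(K)$, and the associated sign-twisted representation $\phi=\alpha\otimes\rho$ satisfies $\phi(\mu_{P_n(K)})=-1\ne 1$, so the gluing formula of Proposition~\ref{prop:glueing} yields
\[
\tau_\rho(P_n(K))\;=\;\tau_\rho(K)\cdot\tau^{\phi}\!\bigl(S^1\times D^2\sm\nu P_n\bigr)\in\Q(F)^\times/N(F).
\]

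By Proposition~\ref{prop:tors} the second factor equals $\Delta_{L(P_n)}(h(s),h(t))$, where $s$ denotes the pattern meridian and $t$ the longitude of $S^1\times D^2$. Under the satellite inclusion, $s$ maps to $\mu_K$, which is torsion in $H_1(Y\sm\nu K;\Z)$ by Proposition~\ref{Prop:CurveDuality}(\ref{item:curve-duality-1}), so $\alpha(s)=1\in F$ and $h(s)=\rho(\mu_K)=-1$; similarly $t$ maps to $\lambda_K$, and since $[\lambda_K]=[K]=2u$ we have $\alpha(t)=u^2$, while our framing choice gives $\rho(\lambda_K)=1$, so $h(t)=u^2$. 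Substituting Proposition~\ref{prop:calculate} one obtains
\[
\tau_\rho(P_n(K))\;=\;\tau_\rho(K)\cdot g_n,\qquad g_n\;:=\;-2n(u^4+1)+(4n-1)u^2.
\]

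By Lemma~\ref{lem:alexander-polynomials-of-patterns}, $g_p$ is an irreducible element of $\Z[F]$ for every odd prime $p$, and $g_p$, $g_q$ are non-associate for distinct odd primes; a direct check shows each $g_p$ is symmetric, so the parity homomorphism $\Phi_{g_p}$ of Definition~\ref{defn:ExtractNorms} is well-defined on $\Q(F)^\times/N(F)$. Since $\mathfrak{C}(K)$ is finite, so is $I_K$, and each of its representatives admits only finitely many irreducible factors in the unique factorisation domain $\Z[F]$. Let $I$ be the set of odd primes $p$ such that $g_p$ does not appear as a factor of any representative of any element of $I_K$; the complement of $I$ in the odd primes is finite, so $I$ is infinite. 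For $p\in I$ we have $\Phi_{g_p}(I_K)=\{0\}$, hence
\[
\Phi_{g_p}\bigl(I_{P_p(K)}\bigr)=\Phi_{g_p}(I_K)+\Phi_{g_p}(g_p)=\{1\},\qquad
\Phi_{g_p}\bigl(I_{P_q(K)}\bigr)=\Phi_{g_p}(I_K)+\Phi_{g_p}(g_q)=\{0\}
\]
for every $q\in I\sm\{p\}$. These subsets of $\Z/2\Z$ differ, so $I_{P_p(K)}\ne I_{P_q(K)}$, and by Corollary~\ref{cor:TorsionInvariant} the knots $P_p(K)$ and $P_q(K)$ are not almost-concordant.

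The main technical hurdle is carefully tracking the images of the pattern meridian and longitude through the satellite and through~$\phi$, so as to achieve the specific values $h(s)=-1$ and $h(t)=u^2$ needed to invoke Lemma~\ref{lem:alexander-polynomials-of-patterns}. This also explains the role of the hypothesis $[x]=2u$: it is precisely what forces $\alpha(\lambda_K)$ to equal $u^2$ (the input to the irreducibility lemma) while keeping $\mu_K$ non-trivial mod~$2$ (so that meridional characters $\rho$ exist).
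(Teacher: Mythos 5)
Your argument is essentially the paper's own proof: the same gluing computation $\tau_\rho(P_n(K))=\tau_\rho(K)\cdot\Delta_{P_n}(-1,u^2)$ via Propositions~\ref{prop:glueing} and~\ref{prop:tors}, the same irreducibility and non-associateness input from Lemma~\ref{lem:alexander-polynomials-of-patterns}, the same parity homomorphisms $\Phi_{g_p}$, and the same choice of $I$ as the odd primes whose polynomials $g_p$ avoid the finitely many torsions $\tau_\rho(K)$, followed by Corollary~\ref{cor:TorsionInvariant}. Your explicit framing normalisation forcing $\lambda_K$ to vanish in $H_1(Y\sm\nu K;\Z/2\Z)$ (so that $h(t)=u^2$ rather than $\pm u^2$), and your check that each $g_p$ is symmetric, are small refinements of details the paper leaves implicit, not a different route.
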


\begin{proof}
Fix a framing $\psi\colon S^1\times D^2\hookrightarrow Y$ of $K$, and define the knots $P_i(K)$, $i\in \N$.
Write $F:= FH_1(Y;\Z)$, and let $\alpha\colon H_1(Y\sm \nu K;\Z)\to F$ be the inclusion induced map.
Define
\[ G\,\,:=\,\,\prod\limits_{\rho \in \mathfrak{C}(K)} \tau^{\alpha\otimes \rho}(Y\sm \nu K)\,\in \,\Z[F],\]
where we take the product over all meridional representations $H_1(Y\sm \nu K;\Z/2\Z)\to \{\pm 1\}$
of $K$.
Note that there are only finitely many such representations and hence this is a finite product.
Define
\[ I\,:=\, \{\mbox{odd primes $p$}\,|\, \Delta_{P_p}(-1,u^2)\in \Z[F]\mbox{ does not divide $G\in \Z[F]$}\}.\]
Note that $I$ is an infinite set since the $\Delta_{P_p}(-1,u^2)$ in $\Z[F]$ are pairwise non-associates. Given $i\in I$, we write $g_i:=\Delta_{P_i}(-1,u^2)$.

Recall that the set $I_n := \{ \tau_\rho(P_n(K)) \,|\, \rho \in \mathfrak{C}(K)\}$
is an almost-concordance invariant, see Corollary~\ref{cor:TorsionInvariant}.
As a result the theorem immediately follows from the next claim.

\begin{claim}
For $n,m \in I$ the sets $I_n$ and $I_m$ only intersect if $n =m$.
\end{claim}

To prove the claim, consider an element~$\tau_\rho(P_n(K)) \in I_n$.
We compute $\tau_{\rho}( P_n(K))$ in terms of $\tau_{\rho}( P_n(K))$ and
the Alexander polynomial~$\Delta_{P_n}$.
By Lemma~\ref{lem:meridian-winding-number-1} and since $[P_n(K)]=[K]=u^2\in F$,
we can use Mayer--Vietoris for torsion to compute
the following equalities in $\Q(F)^\times/N(F)$:
\begin{align*}
\tau_{\rho}(P_n(K)) &=
\tau_{\rho}(K)\cdot \Delta_{P_n}(-1,u^2)
\end{align*}

Note that the parity homomorphism $\Phi_{g_m} \colon \Q(F)^\times/ N(F) \to \Z/2\Z$,
see Definition~\ref{defn:ExtractNorms},
vanishes on $\tau_\rho(K)$.
Consequently, we obtain
\begin{align*}
\Phi_{g_m}(\tau_\rho(P_n(K))) &= \Phi_{g_m}( \tau_\rho(K) ) + \Phi_{g_m}(\Delta_{P_n}(-1,u^2))\\
&= \Phi_{g_m}(\Delta_{P_n}(-1,u^2))\\
&=\begin{cases}
1 & n=m\\
0 & n \neq m
\end{cases}
\end{align*}
where in the last line we used Lemma~\ref{lem:alexander-polynomials-of-patterns}.
This shows the claim, which completes the proof of the theorem.
\end{proof}

\section{The null-homotopic class for spherical space forms}\label{null-homotopic-case}

In Section~\ref{section:torsion-case} we will investigate linking numbers in covering spaces, to give obstructions to almost-concordance.  Before embarking on the general version, we give a more easily digestible version for a special case, as a warm up.  The next result also follows fairly easily from Schneiderman's concordance invariant \cite{MR2012959}.

Recall that a spherical space form is a compact $3$-manifold $Y$ with universal cover $S^3$.  For example, lens spaces $L(r,s)$ are spherical space forms.

\begin{proposition}\label{prop:spherical-space-forms}
  Let $Y$ be a spherical space form and let $x$ be the null-homotopic free homotopy class.  Then if $Y\neq S^3$, the set $\mathcal{C}_x(Y)$ contains infinitely many almost-concordance classes.
\end{proposition}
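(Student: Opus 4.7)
The strategy is to exploit the finite covering $\pi\colon S^3 \to Y$. Since $Y \neq S^3$, the deck transformation group $G := \pi_1(Y)$ has order at least $2$. For any null-homotopic knot $K \subset Y$, the preimage $\widetilde K := \pi^{-1}(K) \subset S^3$ is a link with exactly $|G|$ components (each a homeomorphic lift of $K$), permuted freely and transitively by $G$. I propose to introduce the \emph{covering link linking multiset}
\[
\mathrm{Lk}(K) := \{\!\{ \mathrm{lk}(\widetilde K_g, \widetilde K_h) \mid \{g,h\} \subset G,\ g \neq h\}\!\} \in \mathbb{N}_0 \text{-multisets},
\]
and to show first that this is an almost-concordance invariant, then to produce an infinite family of null-homotopic knots in $Y$ realising pairwise distinct such multisets.

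For invariance under concordance: if $A \subset Y \times I$ is a concordance between null-homotopic knots $K_0$ and $K_1$, then the core of $A$ carries the class $[K_0] = 0 \in \pi_1(Y \times I)$, so $A$ is null-homotopic and lifts to $|G|$ disjoint annuli in $S^3 \times I$ realising a concordance between $\widetilde{K_0}$ and $\widetilde{K_1}$; pairwise linking numbers are classical concordance invariants of links in $S^3$. For invariance under the local action: the connected sum $K \# J$ takes place inside a small $3$-ball $B \subset Y$, which is simply connected and hence lifts as $|G|$ disjoint balls in $S^3$; therefore $\widetilde{K \# J}$ is obtained from $\widetilde K$ by locally connect-summing a copy of $J$ into each component within its own disjoint ball, which preserves all pairwise linking numbers.

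For the construction step, fix a simple closed curve $\gamma \subset Y$ representing a nontrivial element of $G$, and let $V \cong S^1 \times D^2$ be a tubular neighbourhood. For each $n \geq 1$, let $K_n$ be the $n$-twisted Whitehead double of $\gamma$ inside $V$; this is a winding-number-zero pattern applied to $\gamma$, so $K_n$ is null-homotopic in $V$ and hence in $Y$. The preimage $\pi^{-1}(V)$ is a disjoint union of solid tori in $S^3$; in the connected lift $\widetilde V$ that contains a chosen lift $\widetilde\gamma$, the Whitehead clasp produces two lifts of $K_n$ whose pairwise linking number in $S^3$ depends linearly on $n$, while pairwise linkings between lifts lying in different components of $\pi^{-1}(V)$ can be arranged independent of $n$. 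Consequently $\max \mathrm{Lk}(K_n)$ grows with $n$, the multisets are pairwise distinct, and the $K_n$ represent infinitely many almost-concordance classes in $\mathcal{C}_x(Y)$.

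The main obstacle is the last step: exhibiting an explicit family of null-homotopic knots whose covering links are genuinely distinguished by linking numbers. The Whitehead doubling construction is well-suited because winding-number-zero patterns automatically produce null-homotopic satellites, and the linking numbers of lifts in $S^3$ can be computed directly from the clasp data of the pattern together with the way deck transformations permute the sheets of the cyclic cover of $V$. Once these computations are carried out, invariance of $\mathrm{Lk}(K)$ gives the conclusion.
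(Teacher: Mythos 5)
Your general strategy is exactly the paper's: lift to the universal cover $S^3 \to Y$, record the multiset of pairwise linking numbers of the covering link, and observe that this is an almost-concordance invariant because a null-homotopic concordance lifts to a concordance of covering links and because local knotting happens in a ball that lifts to disjoint balls. Those invariance steps are fine. The genuine gap is in the construction step, i.e.\ precisely the part you defer to ``once these computations are carried out'': the $n$-twisted Whitehead double of $\gamma$ does \emph{not} have covering links whose pairwise linking numbers depend on $n$. Trace the preimage of the pattern in the $k$-fold cyclic cover of the solid torus $V$ (where $k>1$ is the order of $\varphi(\gamma)$): each lift of $K_n$ consists of one ``long hairpin'' in one fundamental domain together with the clasping tip from the adjacent domain, so the lifted clasp is indeed a clasp between two \emph{consecutive} lifts and contributes $\pm 1$ to their linking number --- independently of $n$. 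The $n$-twist box, however, sits between the two parallel strands of the double, and the subarc of the pattern joining those two strands (through the fold of the hairpin) has winding number $0$ around the core; hence in the cover both strands passing through any lifted twist box belong to the \emph{same} component. Changing $n$ therefore only changes self-crossings of individual components and leaves every pairwise linking number unchanged, so the multiset $\mathrm{Lk}(K_n)$ is the same for all $n$ and your family is not distinguished by the proposed invariant.

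What makes the paper's pattern (the knot $K_n$ of Figure~\ref{fig:coveringlink1}) work is exactly the feature your pattern lacks: there the two strands entering the $n$-twist box are joined by an arc that winds once around $\gamma$, so in the cover the lifted twist box is traversed by strands from two \emph{different} lifts, related by the deck transformation $g$. This yields $\lk(\widetilde K_n, g^{\pm1}\cdot\widetilde K_n)=n$ (and $2n$ in the separately treated case where $g$ has order two, a case you would also need to address), while all other pairwise linking numbers vanish; the multisets are then pairwise distinct and the argument closes. So to repair your proof, replace the twisted Whitehead double by a winding-number-zero (indeed null-homotopic) pattern in which the twist region involves two strands whose connecting arc has nonzero winding modulo the order of $\varphi(\gamma)$, and verify the linking computation by lifting the immersed null-homotopy disc, as the paper does.
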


\begin{proof}
  Choose a non-trivial element $g \in \pi_1(Y)$, and fix an embedded circle representing $g$.  In a solid torus neighbourhood of $g$, insert the knot $K_n$ shown in Figure \ref{fig:coveringlink1}. Note that $K_n$ represents $x$ in $Y$.

\begin{figure}[h]
\labellist
\pinlabel $n$ [l] at 271 235
\endlabellist
\includegraphics[width=0.6\textwidth]{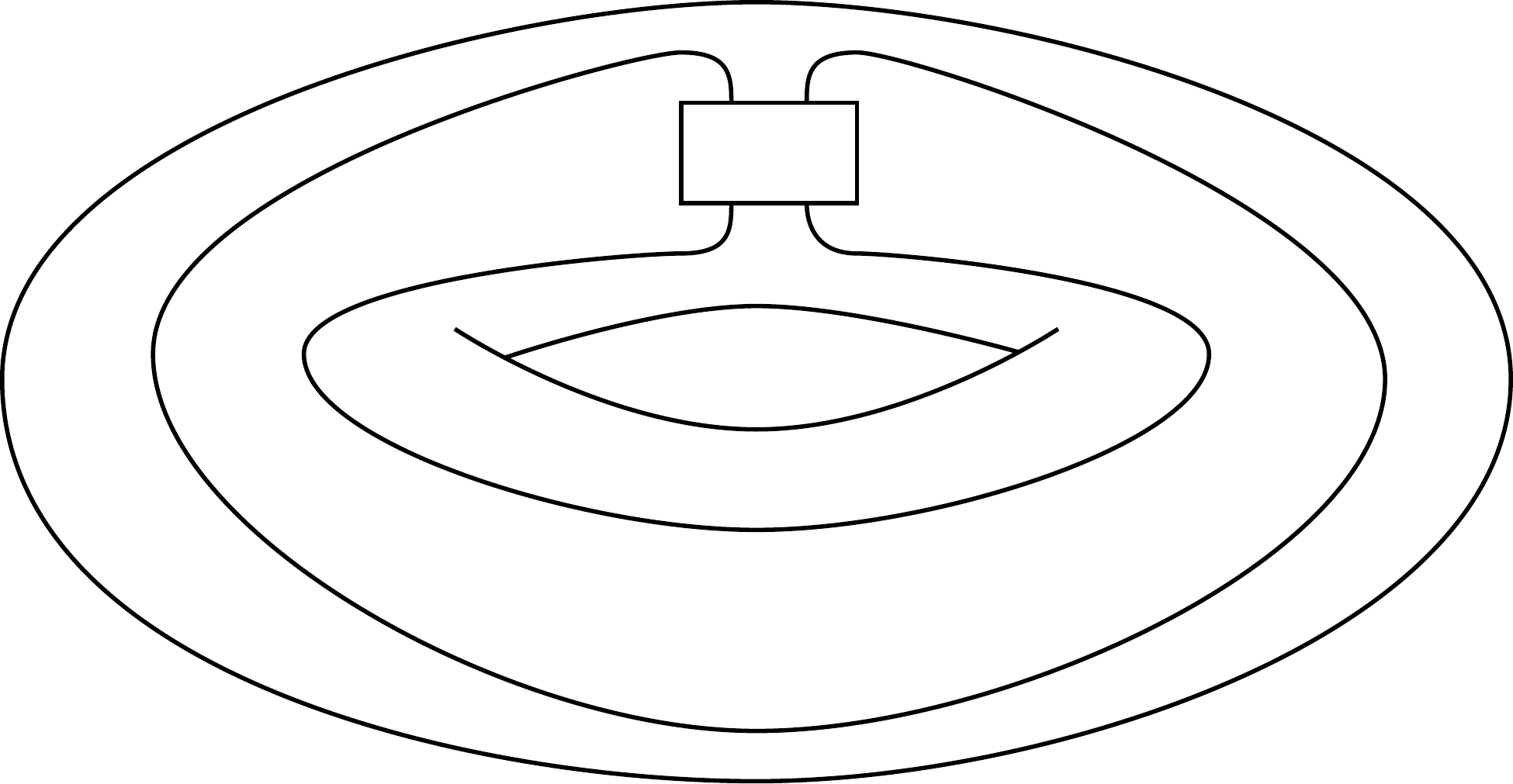}
\caption{The knot~$K_n$ where the $n$--box represents $n$ full twists.}
\label{fig:coveringlink1}
\end{figure}

Let $p \colon S^3 \to Y$ be the universal cover and consider the link given by the preimage $L_n:= p^{-1}(K_n) \subset S^3$. Linking number between components of $L_n$ is not affected by local knotting of $K_n$ in $Y$. Also, a concordance of $K_n$ in $Y \times I$ lifts to a concordance of $L_n$ in $S^3 \times I$, and linking number is a concordance invariant for links in $S^3$. Hence to show the $K_n$ are pairwise non almost-concordant, it suffices to show that for $m\neq n$, the linking numbers between the components of $L_n$ are different from those of $L_m$.

To see this, observe that the knot $K_n$ bounds an immersed disc inside the solid torus, so is null-homotopic.  Moreover we can lift this disc to see that each component of the covering link $L_n$ bounds an embedded disc in $S^3$.  The link $L_n$ has $|\pi_1(Y)|$ components. If the order of $g$ is not equal to two, then different components link with linking numbers either $n$ or $0$, as can be computed by lifting the aforementioned disc to $S^3$, and counting intersections of the other components of the lift with the lifted disc.  Specifically, $\lk(\wti{K}_n,g^{\pm 1}\cdot \wti{K}_n) = n$, and $\lk(\wti{K}_n,h \cdot \wti{K}_n) = 0$ for $h \neq g,g^{-1}$. If the order of $g$ is two, then the linking number between different lifts is either $2n$ or $0$; similarly to the generic case, $\lk(\wti{K}_n,g\cdot \wti{K}_n) = 2n$, and $\lk(\wti{K}_n,h \cdot \wti{K}_n) = 0$ for $h \neq g$.  In particular, the nonzero linking number is realised between at least two components of $L_n$.  It follows that the set of pairwise linking numbers of $L_n$ is different from the corresponding set for $L_m$, as required.
\end{proof}


\section{The torsion case when $Y$ is not the $3$--sphere}\label{section:torsion-case}

In this section, let $Y$ be a closed oriented $3$--manifold, with $Y \neq S^3$, and $x \in [S^1, Y]$ be \emph{torsion}, that is for any choice of basepoint and basing path, $x$ is finite order in $\pi_1(Y)$. We will now construct a family of pairwise non almost-concordant
knots in the torsion class $x$. As in Section \ref{sec:changing}, a satellite construction will be used to build the family, but now the almost-concordance invariant we use to distinguish the knots in the family will be based on the idea of covering links. Recall that, given a knot $K \subset Y$ and a finite covering space $p \colon \wti{Y} \to Y$, the associated \emph{covering link} is the inverse image $p^{-1}(K)$.

Observe that for each $m>0$, the local action from Definition \ref{def:local} extends to an obvious action of the $m$--fold product $\mathcal{C}\times\dots\times\mathcal{C}$ on the set of concordance classes of $m$--component links in a 3--manifold. We call the orbit of a link $L$ under this action the \emph{almost-concordance} class of $L$.

\begin{lemma}\label{lem:CoveringLinks}
Let $K,K'$ be two almost-concordant knots in $Y$ and let $\wti Y \to Y$ be a finite cover. Then
also the associated covering links are almost-concordant.
\end{lemma}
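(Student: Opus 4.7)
The plan is to verify the lemma separately for the two elementary moves generating almost-concordance: an actual concordance from $K$ to $K'$, and a local knotting $K' = K \# J$ with $J \subset S^3$. Since any almost-concordance factors as a local knotting followed by a concordance, handling the two moves in turn will suffice.

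For the concordance move, I would pull back a given concordance $A\colon S^1 \times I \hookrightarrow Y \times I$ under the finite cover $p \times \id_I\colon \wti Y \times I \to Y \times I$. Any finite connected cover of an annulus is again an annulus, so the preimage $(p \times \id_I)^{-1}(A(S^1 \times I))$ is a proper, locally flat embedding of a disjoint union of annuli in $\wti Y \times I$. Its boundary components are exactly the components of $p^{-1}(K)$ and $p^{-1}(K')$, giving a link concordance.

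For the local knotting move, the connected sum $K' = K \# J$ is performed in a small $3$--ball $B \subset Y$ meeting $K$ in an unknotted arc. Since $B$ is contractible, $p^{-1}(B) = B_1 \sqcup \cdots \sqcup B_d$ is a disjoint union of $3$--balls, each mapped homeomorphically onto $B$ by $p$. Inside each $B_i$ the covering link $p^{-1}(K')$ differs from $p^{-1}(K)$ by the same local modification as $K'$ from $K$ inside $B$: the trivial arc is replaced by a $J$--knotted arc. Sorting the modified arcs according to which component of $p^{-1}(K)$ they lie on, each component of $p^{-1}(K)$ undergoes a finite sequence of connected sums with coherently oriented copies of $J$. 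By associativity and commutativity of connected sum in $S^3$, this amounts to a single connect sum with some iterated sum $J \# \cdots \# J \subset S^3$ on each component, which is a per-component local knotting as required.

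The only real subtlety, and thus the main mild obstacle, is the bookkeeping of how the arcs $\{B_i \cap p^{-1}(K)\}_{i=1}^d$ distribute among the components of $p^{-1}(K)$; this distribution is governed by the monodromy of the cover along loops in $K$, but the argument is unaffected since upstairs one sees literally a disjoint union of homeomorphic copies of the downstairs local picture. Composing the two conclusions gives the almost-concordance of covering links asserted by the lemma.
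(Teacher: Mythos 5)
Your proposal is correct and follows essentially the same route as the paper, whose proof simply combines the two facts you verify: a concordance lifts to a concordance between the covering links, and connected sum with a local knot lifts to per-component connected sums with local knots. Your write-up merely supplies the details (preimage annuli, preimage balls, and the monodromy bookkeeping) that the paper leaves implicit.
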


\begin{proof}
Combine the fact that a concordance between $K$ and $K'$ lifts
to a concordance in $\wti{Y} \times I$ between the covering links, and the fact that connected sum with local knots
lifts to connected sums with local knots.
\end{proof}

To take advantage of this observation, we study a notion of linking numbers for links in general 3--manifolds.
Let $N$ be a compact oriented $3$--manifold. Later on, we will specialise this theory to $N=\wti{Y}$.
For two knots $K,J \subset N$ representing torsion homology classes,
we define their \emph{linking number}~$\lk(K, J)$ as follows: pick
a class~$C \in H_2(N, \nu K; \Q)$ with $\partial C = [K] \in H_1(\nu K;\Q)$.
The relative intersection pairing
\[ H_2(N , \nu K; \Q) \times H_1(N \sm \nu K; \Q) \to \Q \]
allows us to define $\lk(K,J) := C \cdot [J]$.

\begin{claim}
The number~$\lk(K, J)$ is well-defined.  That is, it does not depend on the choice of $C$.
\end{claim}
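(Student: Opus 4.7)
My plan would be to combine the long exact sequence of the pair $(N,\nu K)$ with the hypothesis that $[J]$ is torsion in $H_1(N;\Z)$. Suppose $C_1, C_2 \in H_2(N,\nu K;\Q)$ both satisfy $\partial C_i = [K] \in H_1(\nu K;\Q)$, so that their difference $D := C_1 - C_2$ lies in $\ker\partial$. By exactness of
\[
H_2(N;\Q) \to H_2(N, \nu K;\Q) \xrightarrow{\partial} H_1(\nu K;\Q),
\]
$D$ is the image of some absolute class $D' \in H_2(N;\Q)$.

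The next step is to show that the relative intersection $D\cdot [J]$ agrees with the absolute intersection $D' \cdot i_*[J]$, where $i\colon N\sm \nu K \hookrightarrow N$ is the inclusion. Geometrically, I would represent $D'$ by a closed oriented surface in $N$ transverse to $K$ and push it slightly off $K$ along a normal direction to produce a surface-with-boundary in $\partial \nu K$ representing $D$. Since $[J]$ is represented by a cycle in $N\sm\nu K$, this push-off leaves the signed intersection count with $J$ unchanged, so the two pairings coincide.

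Finally, because $[J]$ is by hypothesis a torsion element of $H_1(N;\Z)$, its image $i_*[J] \in H_1(N;\Q)$ vanishes, and hence $D' \cdot i_*[J] = 0$ by bilinearity of the intersection pairing $H_2(N;\Q) \times H_1(N;\Q) \to \Q$. Chaining the two equalities yields $C_1 \cdot [J] = C_2 \cdot [J]$, which is precisely the required independence.

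The only genuine content is the identification of the relative pairing on $H_2(N, \nu K;\Q) \times H_1(N \sm \nu K;\Q)$ with the absolute pairing when one factor lifts to $H_2(N;\Q)$; everything else is an immediate invocation of the long exact sequence and of the torsion hypothesis on $[J]$. I expect this comparison of pairings to be the main step to check carefully, and it can be made rigorous either by the geometric push-off picture above or by a short diagram chase using the naturality of the cap product with the fundamental class of $N$.
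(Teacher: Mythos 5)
Your argument is correct and follows essentially the same route as the paper: realise the difference of the two choices as an absolute class in $H_2(N;\Q)$ via the long exact sequence of the pair, use compatibility of the relative and absolute intersection pairings (the paper encodes your push-off/naturality step as a commutative diagram of pairings), and conclude from the vanishing of $[J]$ in $H_1(N;\Q)$.
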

Let $C,C'$ be distinct choices. Then there exists a class
a class~$S \in H_2(N; \Q)$ with $S  = C - C'\in H_2(N,\nu K;\Q)$.
By the commutativity of the diagram
\[
\xymatrix{
H_2(N, \nu K; \Q) \times H_1(N \sm \nu K; \Q) \ar[dr] &\\
H_2(N; \Q) \times H_1(N\sm \nu K; \Q) \ar[d] \ar[u] \ar[r] & \Q\\
H_2(N; \Q) \times H_1(N; \Q) \ar[ur] &
}
\]
and the fact that $[J]$ gets sent to $0 \in H_1(N; \Q)$, we deduce
that $C \cdot [J] - C' \cdot [J] = S \cdot [J] = 0$, which completes the proof of the claim.

\begin{lemma}\label{lem:linkingNumber}
Let $L$, $L'$ be two almost-concordant links in $N$, whose components are
torsion in $H_1(N; \Z)$.
Then the linking numbers between the components of $L$ agree with
the ones between the components of $L'$.
\end{lemma}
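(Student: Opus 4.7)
The plan is to reduce almost-concordance to two elementary moves and check that pairwise linking numbers are invariant under each. Explicitly, almost-concordance of links is the equivalence relation generated by (i) concordance of links in $N \times I$, and (ii) connect-summing a single component with a local knot from $S^3$ supported in a small $3$-ball disjoint from the other components of the link. I would show invariance of pairwise linking numbers under each move separately.

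For the local knotting move, let $K$ and $J$ be two components of $L$, and suppose $K$ is modified to $K \# J_0$ inside a ball $B \subset N$ with $B \cap J = \varnothing$. Since $[K]$ is torsion in $H_1(N;\Z)$, pick a rational $2$-chain $\Sigma$ in $N$ with $\partial \Sigma = K$, so that $\lk(K, J) = \Sigma \cdot J$. The plan is to modify $\Sigma$ inside $B$ to a $2$-chain $\Sigma'$ with $\partial \Sigma' = K \# J_0$, which is possible because $H_1(B;\Q)=0$. Outside $B$ the two chains agree, and since $J$ is disjoint from $B$, the intersection numbers satisfy $\Sigma' \cdot J = \Sigma \cdot J$, showing the linking number is preserved.

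The concordance move is the heart of the argument. Let $A = A_K \sqcup A_J \subset N \times I$ be a concordance from $K_0 \cup J_0$ to $K_1 \cup J_1$, and choose rational Seifert $2$-chains $\Sigma_0 \subset N \times \{0\}$ and $\Sigma_1 \subset N \times \{1\}$ with $\partial \Sigma_i = K_i$. Form the $2$-chain $\widetilde{\Sigma} := \Sigma_0 + A_K$ in $N \times I$; its boundary is $K_1$. I would compute the algebraic intersection in the $4$-manifold as
\[ \widetilde{\Sigma} \cdot A_J = \Sigma_0 \cdot A_J + A_K \cdot A_J = \lk(K_0, J_0) + 0, \]
where the first term computes to $\Sigma_0 \cdot J_0$ (the only points of $A_J$ near $N \times \{0\}$ lie on $J_0$) and the second vanishes since the concordance annuli are disjoint. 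By the symmetric argument at the other end, $\Sigma_1 \cdot A_J = \lk(K_1, J_1)$.

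The main and most subtle step is then to show that the closed rational $2$-cycle $Z := \widetilde{\Sigma} - \Sigma_1$ in $N \times I$ satisfies $Z \cdot A_J = 0$. The plan is to perturb $Z$ into a generic slice $N \times \{t\}$; there $A_J$ meets $N \times \{t\}$ in a level curve $J_t$ representing $[J_0] \in H_1(N;\Z)$, so the intersection $Z \cdot A_J$ reduces to the Poincar\'e duality pairing $[Z] \cdot [J_0]$ in $H_*(N;\Q)$. The torsion hypothesis on $J_0$ forces $[J_0] = 0$ in $H_1(N;\Q)$, so this pairing vanishes. Hence $\lk(K_0, J_0) = \lk(K_1, J_1)$, completing the argument. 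This torsion reduction is precisely where the hypothesis on the components of $L$ is used, and it is the step I expect to require the most care to make rigorous.
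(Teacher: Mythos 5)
Your proof is correct and follows essentially the same strategy as the paper's: cap the concordance annulus of one component with rational Seifert $2$--chains at both ends to obtain a closed cycle, pair it against the other component's annulus, and observe that the geometric count gives $\lk(K_0,J_0)-\lk(K_1,J_1)$ while the homological pairing vanishes because of the torsion hypothesis. The only differences are cosmetic: the paper establishes the vanishing by showing $[A_i]=0$ in $H_2(N\times I, N\times\partial I;\Q)$ via the long exact sequence of the pair and surjectivity of $H_2(N\times\partial I;\Q)\to H_2(N\times I;\Q)$, whereas you keep $[A_J]$ fixed and instead represent the closed cycle in a level slice $N\times\{t\}$ and use that $[J_0]$ is rationally null-homologous in $N$ (both arguments rest on the same Poincar\'e--Lefschetz duality pairing), and you additionally spell out the easy invariance under local knotting, which the paper leaves implicit.
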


\begin{proof}Let $L= L_1 \cup \cdots \cup L_m \subset N \times \{0\}$, let $L'= L_1 \cup \cdots L'_m \subset N \times \{1\}$, and let $A=A_1 \cup \cdots \cup A_n \subset N \times I$ be a concordance between $L$ and $L'$.

Fix $i \neq j \in \{1,\dots,m\}$.
The annulus $A_i$ determines a class in $H_2(N \times I,N \times \{0,1\};\Q)$.  Pick a class~$C_j \in H_2(N \times \{0\}, L_j; \Q)$ with $\partial C_j = [L_j] \in H_1(L_j;\Q)$, and choose a class~$C'_j \in H_2(N\times \{1\}, L'_j; \Q)$ with $\partial C'_j = [L'_j] \in H_1(L'_j;\Q)$.  Lift $C_j$ and $C_j'$ to $2$--chains in $C_2(N \times \{k\};\Q)$, for $k=0,1$ respectively.
The sum of chains $D_j:= C_j + A_j + C_j'$ represents a class in $H_2(N \times I;\Q)$.  Since $A_i \cap A_j = \emptyset$, we have that the intersection between $D_j$ and $A_j$ is contained in the boundary $N \times \{0,1\}$, and so $[D_j] \cdot [A_i] = \lk(L_i,L_j)-\lk(L_i',L_j')$.

Next we argue that $A_i$ is zero in $H_2(N \times I ,N\times \partial I;\Q)$.
Consider the long exact sequence of a pair:
\[H_2(N \times \partial I;\Q) \to H_2(N \times I;\Q) \to H_2(N \times I,N\times \partial I;\Q) \to H_1(N \times \partial I;\Q).\]
The class $A_i \in H_2(N \times I ,N\times \partial I;\Q)$ lies in the image of $H_2(N \times I;\Q)$.  But since $N \times I$ is a product, the map \[H_2(N \times \partial I;\Q) \cong H_2(N;\Q) \oplus H_2(N ;\Q) \to H_2(N \times I;\Q) \cong H_2(N;\Q)\]
is surjective, and therefore the map $H_2(N \times I;\Q) \to H_2(N \times I ,N\times \partial I;\Q)$ is the zero map.  It follows that $A_i=0 \in H_2(N \times I, N \times \partial I;\Q)$ as desired.

The intersection pairing $\lambda \colon H_2(N \times I;\Q) \times H_2(N \times I ,N\times \partial I;\Q) \to \Q$ is (by definition) adjoint to the composition of the two isomorphisms \[H_2(N \times I;\Q) \xrightarrow{\cong} H^2(N \times I ,N\times \partial I;\Q) \xrightarrow{\cong} \Hom_{\Q}(H_2(N \times I ,N\times \partial I;\Q),\Q),\]
given by Poincar\'{e} duality and the Universal Coefficient Theorem.
Since such a pairing can be computed by geometric intersections, we compute $[D_j] \cdot [A_i]$ as $\lambda([D_j],[A_i]) = \lambda([D_j],0) =0$.
Therefore $\lk(L_i,L_j)-\lk(L_i',L_j')=0$ as required.
\end{proof}

\begin{theorem}\label{thm:linkingincover}
Let $Y \neq S^3$ be a closed, oriented $3$--manifold. Let $x \in [S^1, Y]$ be torsion
and denote the normal closure of~$x$ by $H := \langle \langle x \rangle \rangle \subset \pi_1(Y)$.
If the quotient $\pi_1(Y)/H$ is non-trivial,
then $\mathcal{C}_x(Y)$ contains infinitely
many distinct almost-concordance classes.
\end{theorem}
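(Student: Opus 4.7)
My plan is to extend the covering-link argument of Proposition~\ref{prop:spherical-space-forms} from spherical space forms to a general torsion class $x$ in a general $Y \neq S^3$. Given Lemmas~\ref{lem:CoveringLinks} and~\ref{lem:linkingNumber}, it suffices to construct a family $\{K_n\}_{n\in\N}$ of knots in $Y$ representing $x$ whose lifts to a suitable finite cover have pairwise distinct multisets of pairwise linking numbers between components. To choose the cover, I use that $\pi_1(Y)/H$ is non-trivial and that $3$--manifold groups are residually finite (by Hempel, using geometrisation) to locate a finite-index normal subgroup $N \trianglelefteq \pi_1(Y)$ with $H \subseteq N \subsetneq \pi_1(Y)$; let $p \colon \wti Y \to Y$ denote the resulting finite regular cover, with non-trivial deck group $G := \pi_1(Y)/N$. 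Because $x \in H \subseteq N = \pi_1(\wti Y)$, any knot representing $x$ lifts to a $|G|$--component link in $\wti Y$ whose components represent torsion classes in $H_1(\wti Y;\Z)$ (being conjugates of the torsion element~$x$), so Lemma~\ref{lem:linkingNumber} genuinely applies.

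To construct the family, pick $g \in \pi_1(Y)\setminus N$, an embedded loop $\gamma\subset Y$ representing $g$, and a knot $K_0$ representing $x$ inside a small ball disjoint from a solid-torus neighbourhood $\nu\gamma$. Inside $\nu\gamma$, insert the knot $K_n^{\mathrm{wu}}$ of Figure~\ref{fig:coveringlink1}, which is null-homotopic in $\nu\gamma$ and hence in $Y$, and define $K_n$ to be the band sum of $K_0$ with $K_n^{\mathrm{wu}}$ along a fixed arc in $Y\setminus(K_0\cup\nu\gamma)$, so each $K_n$ still represents $x$. The covering linking-number computation then follows the pattern of Proposition~\ref{prop:spherical-space-forms}: picking a lift $\wti\gamma$ of $\gamma$, let $C_1$ and $C_2$ be the components of $p^{-1}(K_n)$ whose $K_n^{\mathrm{wu}}$--pieces lie near $\wti\gamma$ and $g\cdot\wti\gamma$ respectively, and cap off the lift of the null-homotopy disc of $K_n^{\mathrm{wu}}$ in the cover of $\nu\gamma$. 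The intersection count of the resulting surface with $C_2$ equals $n$ (or $2n$ if $g$ has order two in $G$), up to a contribution from the $K_0$--part and the band that is independent of $n$; all other pairwise linking numbers in $p^{-1}(K_n)$ are similarly uniformly bounded in $n$.

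Hence for $n\neq m$ large enough the multisets of pairwise linking numbers of $p^{-1}(K_n)$ and $p^{-1}(K_m)$ differ, so by Lemmas~\ref{lem:CoveringLinks} and~\ref{lem:linkingNumber} the knots $K_n$ are pairwise non-almost-concordant in $Y$, yielding the desired infinite collection of classes in $\mathcal{C}_x(Y)$. The main obstacle I anticipate is the intersection-theoretic bookkeeping in the third step: one must verify carefully that the $K_0$--part and the connecting band contribute only a bounded amount to the inter-component linking numbers in the cover, so that the linear-in-$n$ contribution from $K_n^{\mathrm{wu}}$ dominates and is visible in the multiset of pairwise linking numbers. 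A secondary but genuine concern is the invocation of residual finiteness to pass from the cover associated to $H$ itself (which may be infinite, e.g.\ for $Y = T^3$ and $x = 0$) to a finite cover where Lemma~\ref{lem:linkingNumber} directly applies.
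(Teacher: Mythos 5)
Your overall strategy is exactly the paper's: clasp a representative of $x$ around a curve ($\gamma$ in your notation, $\eta$ in the paper's) whose image survives in a finite quotient $G$ of $\pi_1(Y)$ killing $H$, pass to the associated finite cover, and distinguish the covering links $L_n$ by the growth of the maximal pairwise linking number, via Lemma~\ref{lem:CoveringLinks} and Lemma~\ref{lem:linkingNumber}. Your third step's bookkeeping is handled in the paper by decomposing $[D]=[D_{\text{dist}}]+[D_{\text{box}}]$ in $H_1(\wti Y\sm \nu C;\Q)$: the box term lies in a $3$--ball and contributes exactly $n$ (or $2n$ in the $2$--torsion case), and $\lk(C,D_{\text{dist}})$ is independent of $n$, so the maximum of the linking-number set tends to infinity; your ``cap off the lifted disc'' computation is the same estimate, and your observation that the components are torsion in $H_1(\wti Y;\Z)$ (so the linking numbers are defined) is correct and needed. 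One small slip: a knot ``inside a small ball'' cannot represent $x$ unless $x$ is null-homotopic; you just want a representative $\alpha$ of $x$ disjoint from $\gamma$, and the paper packages $\alpha$ and $\eta$ as the two cores of a genus two handlebody containing $K_n$.

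The genuine gap is the first step, which you flagged yourself. Residual finiteness of $\pi_1(Y)$ does \emph{not} produce a finite-index normal subgroup $N$ with $H\subseteq N\subsetneq \pi_1(Y)$: what is needed is a non-trivial finite quotient of $\pi_1(Y)/H$, and a quotient of a residually finite group need be neither residually finite nor have any proper finite-index subgroup; nor is $\pi_1(Y)/H$ a priori a $3$--manifold group, so Hempel's theorem does not apply to it directly. The paper proves this as a separate claim using the prime decomposition and geometrisation: write $Y\cong N_1\#\cdots\# N_r$, so $\pi_1(Y)\cong A_1\ast\cdots\ast A_r$ with each $A_i$ finite or torsion-free; the torsion element $x$ is conjugate into one factor, say $A_1$, so $\pi_1(Y)/H$ surjects onto $A_2\ast\cdots\ast A_r$. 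For $r\geq 2$ this target is a non-trivial $3$--manifold group, hence residually finite, which yields the finite quotient $G$; for $r=1$ either $\pi_1(Y)$ is finite and one takes $G=\pi_1(Y)/H$ itself, or $\pi_1(Y)$ is torsion-free, in which case $x=1$, $H=1$, and residual finiteness of $\pi_1(Y)$ does apply. With this claim supplied, the rest of your argument goes through and recovers the paper's proof.
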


\begin{proof}
First we need the following fact, which uses general results about fundamental groups of 3--manifolds.

\begin{claim}
There exists an epimorphism~$ \phi \colon \pi_1(Y)/H \twoheadrightarrow G$ to some non-trivial finite group~$G$.
\end{claim}

By the Prime Decomposition Theorem and the Geometrisation Theorem, $Y\cong N_1\#\dots\# N_r$ for some closed, oriented 3--manifolds $N_i$, where for all~$i$, either $A_i:=\pi_1(N_i)$ is finite or $A_i$ is torsion free. See \cite[C.3 and \textsection\textsection 1.2, 1.7]{MR3444187} for details and references. As $x\in \pi_1(Y)\cong A_1\ast\dots\ast A_r$ is a torsion element, it must be conjugate to an element $a\in A_i$ for some $i$ \cite[Cor.\ I.1.1]{MR1954121}. By reordering, assume $i=1$. There is now an obvious epimorphism \[\Phi\colon A_1\ast\dots\ast A_r/\langle \langle x \rangle\rangle\twoheadrightarrow A_2\ast\dots\ast A_r.\]For the case $r\neq 1$, consider that the codomain of $\Phi$ is the fundamental group of a 3--manifold and is therefore residually finite, by the Geometrisation Theorem. Hence the required epimorphism $\phi$ exists. For the case $r=1$, we have that $A_1$ is finite and we may take $\phi$ as the identity map. This completes the proof of the claim.

Pick such an epimorphism~$ \phi \colon \pi_1(Y)/H \twoheadrightarrow G$ and compose with the quotient map to obtain an
epimorphism~$\phi \colon \pi_1(Y) \twoheadrightarrow G$, that vanishes on $H$.
Pick a knot~$\eta$ with $\phi(\eta)\neq 0$ nontrivial and another knot~$\alpha$
representing $x$, and disjoint from $\eta$. Pick a genus 2 handlebody~$B \subset Y$ whose cores are $\eta$ and $\alpha$.
Let $K_n \subset B$ be the knot described in Figure~\ref{fig:nullhomotopypattern}.
The homotopy class of $K_n$ agrees with the one of $\alpha$ and therefore $K_n$ represents the class $x$.

\begin{figure}[h]
\labellist
\pinlabel $\alpha$ [l] at 238 70
\pinlabel $\eta$ [l] at 455 33
\pinlabel $n$ [l] at 438 121
\endlabellist
\includegraphics[width=0.9\textwidth]{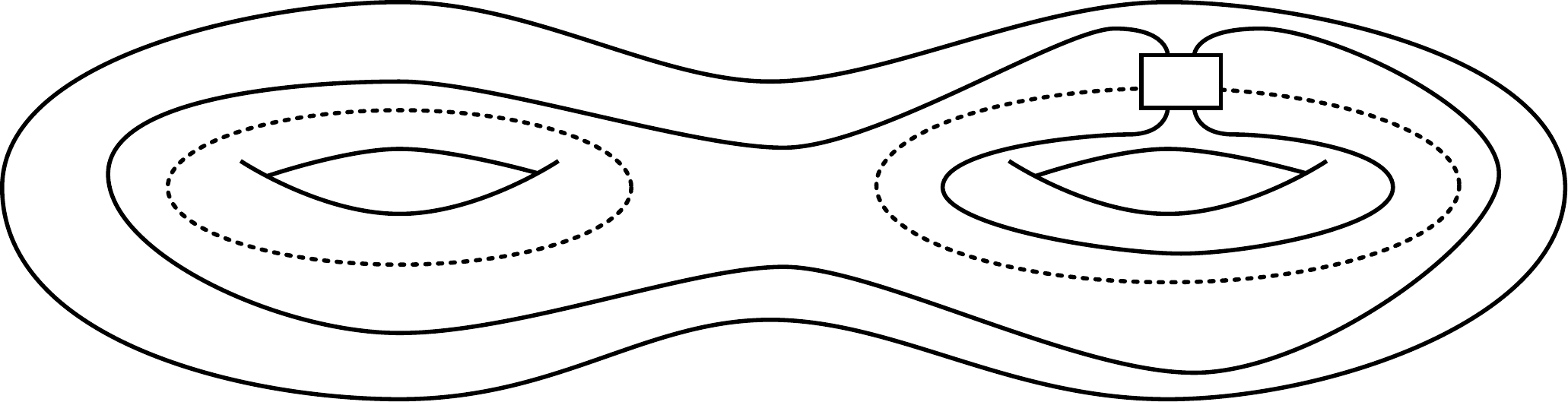}
\caption{The knot~$K_n$ where the $n$--box represents $n$ full twists.}
\label{fig:nullhomotopypattern}
\end{figure}

We show that the set of knots~$\{K_n\,|\,n>1\}$ contains infinitely many pairwise not almost-concordant elements.
Consider the finite cover~$p \colon \wti Y \to Y$ associated to the kernel of
$\phi \colon \pi_1(Y) \to G$, and denote the covering link of $K_n$ by $L_n$, that is $L_n := p^{-1}(K_n)$.
As $\phi(\alpha)=0$, the restriction of $\phi$ to $\pi_1(B)$ is an epimorphism to the group generated by $\phi(\eta)$ in $G$. As $G$ is finite, and $\phi(\eta)\neq 0$, so $\phi(\eta)$ must generate a finite cyclic group $C_k$ for $k>1$. In other words, the cover of $B$ induced by $\phi$ is determined by an epimorphism $\pi_1(B)\twoheadrightarrow C_k$, and thus we obtain a cover which in each
component contains components of the link~$L_n$ as depicted in
Figure~\ref{fig:preimagehandlebody}.
Let $S_n := \{\lk(C, D) \,|\, C, D \text{ components of } L_n \} \subset \Z$ be the set of linking numbers.
\begin{figure}[h]
\labellist
\small\hair 0pt
\pinlabel $n$ [l] at 304 69
\pinlabel  \rotatebox{-90}{$n$} at 404 14
\pinlabel  \rotatebox{90}{$n$} at 215 14
\endlabellist
\includegraphics[width=0.75\textwidth]{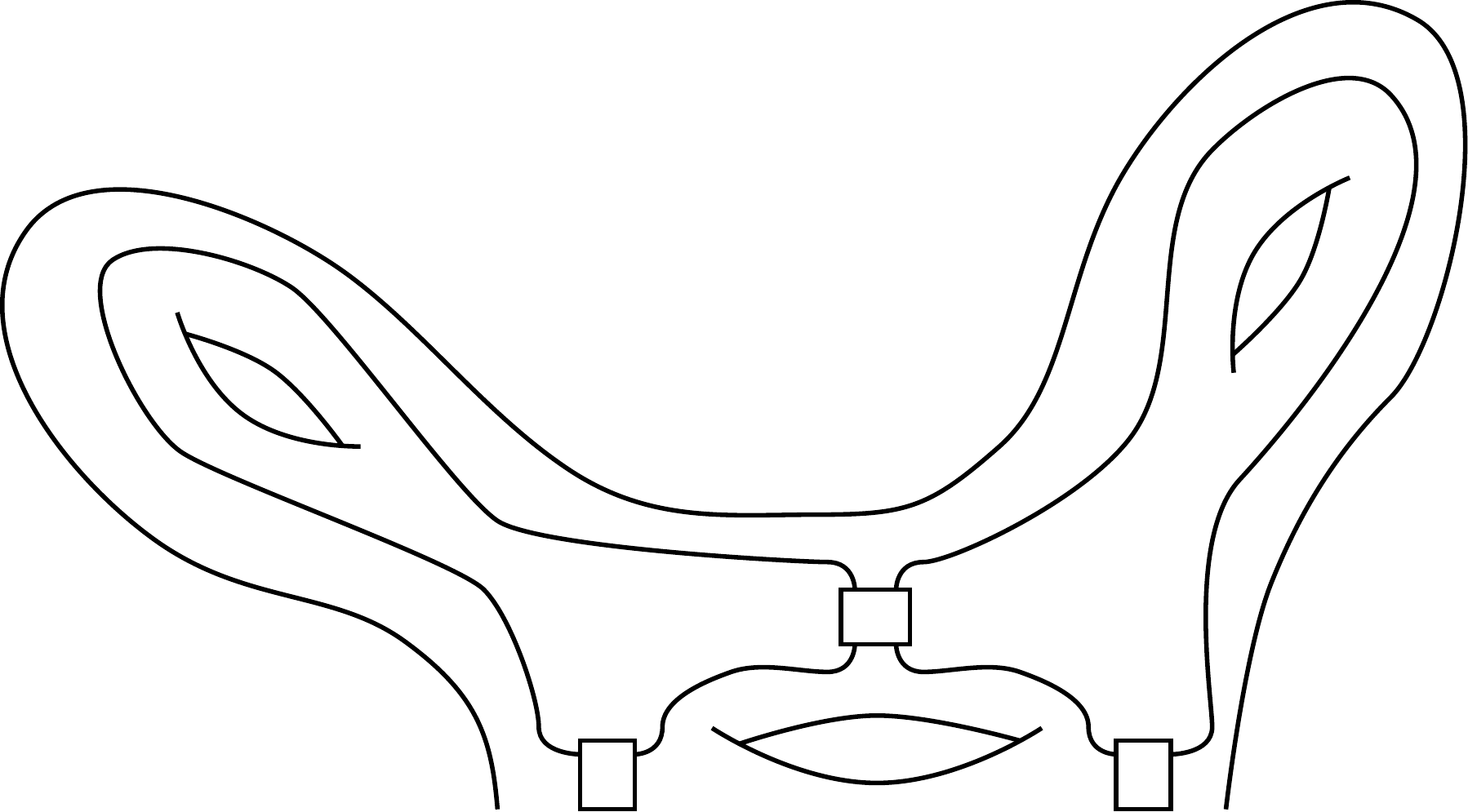}
\caption{Part of a component of the preimage of the handlebody~$B$, with some components of the link~$L_n$.}
\label{fig:preimagehandlebody}
\end{figure}

\begin{claim}
The maximal integer in the set $S_n$ becomes arbitrarily large as $n \rightarrow \infty$.
\end{claim}

Suppose that $\eta$ does not map to a $2$--torsion element in $C_k$. In the case that $\eta$ maps to $2$--torsion, the argument is similar, as in the proof of Proposition~\ref{prop:spherical-space-forms}.
Pick a connected component of the preimage of the handlebody~$B$ in~$\wti{Y}$.
Furthermore, pick two link components $C,D$ of $L_n$ in $B$,
which are related by an $n$--twist box and hence a single $n$--twist box as $\eta$ is not $2$--torsion in $C_k$.
Note that in the complement of $D$ in $Y$,
the homology class of $D$ decomposes as $[D] = [D_\text{dist}] + [D_\text{box}] \in H_1(\wti Y \sm \nu C; \Q)$,
where the curves $D_\text{dist}$ and $D_\text{box}$ are described in Figure~$\ref{fig:mickeysplit}$.
\begin{figure}[h]
\labellist
\small\hair 0pt
\pinlabel $n$ [l] at 304 69
\pinlabel $C$ [l] at 390 110
\pinlabel $D_\text{dist}$ [l] at 150 100
\pinlabel $D_\text{box}$ [l] at 230 69
\pinlabel  \rotatebox{-90}{$n$} at 404 14
\endlabellist
\includegraphics[width=0.75\textwidth]{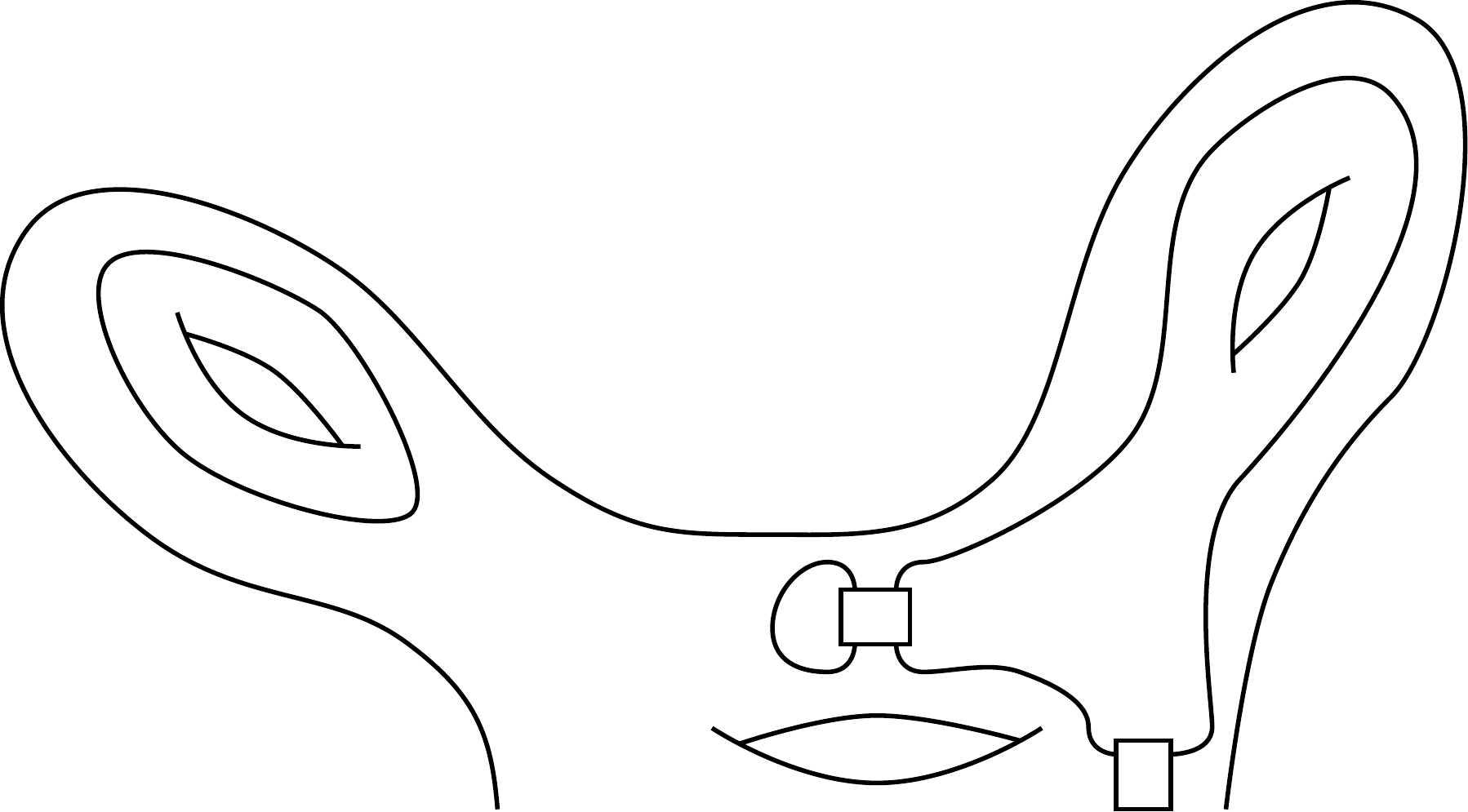}
\caption{The two contributions to the linking number~$\lk(C,D)$.}
\label{fig:mickeysplit}
\end{figure}
As $D_\text{box}$ is contained in a $3$--ball,
we may compute the linking number~$\lk(C, D_\text{box}) = n$. Consequently, we get
\[ \lk(C,D) = \lk(C, D_\text{dist}) + \lk(C, D_\text{box}) = \lk(C, D_\text{dist}) + n.\]
As the number~$\lk(C, D_\text{dist})$ is independent of $n$, this
proves the claim.

By Lemma~\ref{lem:linkingNumber}, the
set $S_n$ is an almost-concordance invariant and therefore the set $\{ L_n\,|\, n>1\}$ contains
infinitely many distinct almost-concordance classes. By Lemma~\ref{lem:CoveringLinks},
so does the set $\{ K_n \,|\, n>1 \}$.
\end{proof}

\begin{corollary}\label{cor:TorstionCase}$\,$
\begin{enumerate}[font=\normalfont]
\item Let $Y'$ be a spherical 3--manifold, and let $Y:= Y' \# Z$ for some $Z \neq S^3$.  Then any class in $x \in \pi_1(Y')$ is torsion, and since $\pi_1(Z) \neq 1$, we can apply the theorem to see that $\mathcal{C}_x(Y)$ contains infinitely many almost-concordance classes.
\item For any 3--manifold $Y \neq S^3$, the null-homotopic class $x$ contains infinitely many almost-concordance classes.
\end{enumerate}
\end{corollary}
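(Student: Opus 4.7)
The plan is simply to verify the two hypotheses of Theorem~\ref{thm:linkingincover} in each of the two cases: that $x$ is a torsion class in $\pi_1(Y)$, and that the normal closure quotient $\pi_1(Y)/\langle\langle x \rangle\rangle$ is non-trivial. Once these are checked, the theorem directly furnishes infinitely many distinct almost-concordance classes in $\mathcal{C}_x(Y)$ and nothing further is required.

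For part (2), the null-homotopic class corresponds to $x = 1$, which is obviously torsion, and $\pi_1(Y)/\langle\langle 1\rangle\rangle = \pi_1(Y)$. I would invoke the Poincar\'e conjecture (Perelman): a closed, orientable $3$--manifold with trivial fundamental group is homeomorphic to $S^3$. Since we assume $Y \neq S^3$, this forces $\pi_1(Y) \neq 1$, so Theorem~\ref{thm:linkingincover} applies.

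For part (1), I would first use van Kampen's theorem to identify $\pi_1(Y) = \pi_1(Y' \# Z) \cong \pi_1(Y') * \pi_1(Z)$. Since $Y'$ is spherical, $\pi_1(Y')$ is finite, and thus every element $x \in \pi_1(Y')$ has finite order; its image in the free product retains this order because the canonical inclusion of a factor into a free product is injective and order-preserving. For the second hypothesis, I would invoke the standard fact that in a free product $A * B$, the normal closure of a subset $S \subset A$ is compatible with the quotient, giving an isomorphism
\[
\pi_1(Y)/\langle\langle x\rangle\rangle \;\cong\; \bigl(\pi_1(Y')/\langle\langle x\rangle\rangle_{\pi_1(Y')}\bigr) * \pi_1(Z),
\]
which has $\pi_1(Z)$ as a free factor. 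Since $Z \neq S^3$, the Poincar\'e conjecture once again gives $\pi_1(Z) \neq 1$, so the quotient is non-trivial, and Theorem~\ref{thm:linkingincover} applies.

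The only small subtlety is the based-versus-unbased distinction: the free homotopy class $x \in [S^1, Y]$ corresponds to a conjugacy class in $\pi_1(Y)$ rather than a single element. However, since both the order of an element and the normal closure it generates depend only on its conjugacy class, this causes no difficulty. There is no genuine obstacle here: the substantive work has already been carried out in Theorem~\ref{thm:linkingincover}, and this corollary is purely a matter of extracting two applicable cases.
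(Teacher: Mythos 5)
Your proposal is correct and follows essentially the same route as the paper, which treats the corollary as an immediate application of Theorem~\ref{thm:linkingincover}: the class is torsion because $\pi_1(Y')$ is finite (resp.\ because $x$ is trivial), and the quotient by the normal closure is non-trivial because it surjects onto $\pi_1(Z)$ (resp.\ equals $\pi_1(Y)\neq 1$, using the Poincar\'e conjecture). Your extra remarks on the free-product decomposition of the quotient and on the conjugacy-class subtlety are accurate but add nothing beyond the paper's implicit argument.
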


The next theorem is not quite a corollary of Theorem~\ref{thm:linkingincover} because the class $x$ in question is not torsion in homotopy, however the same ideas as in that theorem also work in the following case.

\begin{theorem}\label{thm:non-separating-surface}
Let $Y$ be a closed oriented $3$--manifold. Suppose that $Y$ has a non-separating embedded oriented
surface~$\Sigma$, i.e.\ $\rk H_1(Y) \geq 1$.
Suppose $x=[\alpha]$ for a separating curve $\alpha$ on $\Sigma$.
Then $\mathcal{C}_x(Y)$ contains infinitely
many distinct almost-concordance classes.
\end{theorem}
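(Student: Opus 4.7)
The plan is to imitate Theorem~\ref{thm:linkingincover}, modified to account for the fact that $x$ need not be torsion in $\pi_1(Y)$. Because $\Sigma$ is non-separating, Poincar\'{e} duality yields an epimorphism $\phi \colon \pi_1(Y) \twoheadrightarrow \Z$ given by algebraic intersection with $\Sigma$. Any loop on $\Sigma$ can be pushed off $\Sigma$ to have intersection number zero with it, so $\phi$ vanishes on the image of $\pi_1(\Sigma)$; in particular $\phi(\alpha) = 0$. Moreover, since $\alpha$ is separating on $\Sigma$, it bounds a subsurface $\Sigma_1 \subset \Sigma$, so that $[\alpha] = 0 \in H_1(Y;\Z)$. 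Fix any $k > 1$ and let $p \colon \wti Y \to Y$ denote the $k$-fold cyclic cover associated with the composition $\pi_1(Y) \xrightarrow{\phi} \Z \twoheadrightarrow \Z/k\Z$.

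Next, I would pick a loop $\eta \subset Y$ disjoint from $\alpha$ with $\phi(\eta) = 1$ (obtained by starting from a geometric dual of $\Sigma$ and perturbing it away from $\alpha \subset \Sigma$), choose a genus-two handlebody $B \subset Y$ with cores $\alpha$ and $\eta$, and define a family of knots $K_n \subset B$ by the same pattern as in Figure~\ref{fig:nullhomotopypattern}: $K_n$ is built from $\alpha$ by extending a small finger which goes and returns along $\eta$ with an $n$-twist box inserted between the two strands of the finger. Because the finger is homotopic to a constant loop in $B$, each $K_n$ is freely homotopic to $\alpha$ in $B$ and hence in $Y$, so $K_n$ represents the class $x$.

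Let $L_n := p^{-1}(K_n) \subset \wti Y$. Since $\pi_1(\Sigma_1) \subset \ker \phi$, the subsurface $\Sigma_1$ lifts to $\wti Y$, and each lift of $\alpha$ bounds such a lift; combining this with the free homotopy in $B$ from $K_n$ to $\alpha$ produces an explicit $2$-chain in $\wti Y$ showing that every component of $L_n$ is null-homologous in $\wti Y$. The rational linking numbers defined earlier in this section therefore apply. A geometric calculation mirroring the decomposition $[D] = [D_\mathrm{dist}] + [D_\mathrm{box}]$ from the proof of Theorem~\ref{thm:linkingincover}, where the $D_\mathrm{box}$ contribution sits in a $3$-ball around the relevant $\eta$-handle of the cover of $B$, shows that two components of $L_n$ related by a single deck translation in the $\eta$-direction link with linking number of the form $n + c$ for an $n$-independent constant $c$, while other pairs contribute linking numbers that do not depend on $n$. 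Hence the maximum of the set $S_n := \{ \lk(C,D) \mid C \neq D \text{ components of } L_n\}$ grows without bound in $n$.

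Finally, by Lemmas~\ref{lem:CoveringLinks} and \ref{lem:linkingNumber}, $S_n$ is an invariant of the almost-concordance class of $K_n$, so the family $\{K_n\}_{n\geq 1}$ contains infinitely many distinct almost-concordance classes in $\mathcal{C}_x(Y)$. The main obstacle I anticipate is producing and tracking, in the cover $\wti Y$, an explicit rational $2$-chain bounding a chosen component of $L_n$: this requires stitching together a lift of $\Sigma_1$, a lift of the homotopy between $K_n$ and $\alpha$, and a capping off of the finger, and then verifying that intersecting the resulting chain with a \emph{different} component of $L_n$ picks up precisely the $n$-twist box rather than some other contribution from the covering geometry. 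Once this computation is carried out, the conclusion follows formally from the invariance of $S_n$.
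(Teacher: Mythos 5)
Your construction is exactly the paper's: the same $\Z/k\Z$ cover dual to $\Sigma$, the same auxiliary curve $\eta$, the same genus-two handlebody $B$ with cores $\alpha$ and $\eta$, the same knots $K_n$ from Figure~\ref{fig:nullhomotopypattern}, and the same invariant, namely the set of pairwise linking numbers of the covering link via Lemmas~\ref{lem:CoveringLinks} and~\ref{lem:linkingNumber}. The problem is that the step you yourself flag as the ``main obstacle'' is precisely the heart of the theorem, and it is not carried out: the assertion that two components $C,D$ of $L_n$ related in the $\eta$--direction satisfy $\lk(C,D)=n+c$ with $c$ independent of $n$ is exactly what must be proved. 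With your choice of bounding data --- a \emph{singular} $2$--chain assembled from a lift of the homotopy $K_n\simeq\alpha$ in $B$ together with a lift of $\Sigma_1$ --- this is genuinely delicate: the lifted homotopy sweeps across the twist region and may cross other components of $L_n$ in ways that are not obviously $n$--independent, and since you only require $\eta$ to be disjoint from $\alpha$ (not from $\Sigma_1$), the lift of $\Sigma_1$ may also meet other components and contribute further terms you would have to control. As written, the growth of $\max S_n$ is asserted rather than established.

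The paper closes this gap by using the separating hypothesis to produce an \emph{embedded} Seifert surface instead of a singular chain, and by strengthening the choice of $\eta$: one takes $\eta$ disjoint from the whole subsurface $S\subset\Sigma$ with $\partial S=\alpha$, notes that the link $\alpha\cup K_n$ bounds an embedded surface inside $B$ (Figure~\ref{fig:orsonspicture}), and glues $S$ to it along $\alpha$ to obtain an embedded Seifert surface for $K_n$ carried into $\ker\phi$, which therefore lifts to a Seifert surface for every component of $L_n$. Computing linking numbers by intersecting with this lifted surface and splitting $[D]=[D_\text{dist}]+[D_\text{box}]$ as in Theorem~\ref{thm:linkingincover}, the box term gives $n$ while the distant term vanishes because the lift of $S$ is geometrically disjoint from $D_\text{dist}$ (this is where $\eta\cap S=\emptyset$ is used), so any two components link exactly $0$ or $n$ times; the case where $\phi(\eta)$ is $2$--torsion, i.e.\ $k=2$, needs the same minor modification as in Proposition~\ref{prop:spherical-space-forms}, or can be avoided by taking $k\geq 3$. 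If you replace your $2$--chain by this lifted embedded Seifert surface and choose $\eta$ disjoint from $S$, your outline becomes a complete proof and coincides with the paper's.
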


\begin{proof}
For some $k > 1$, consider the cover~$\ol Y$ associated to the kernel~$\ker \phi$ of the map
\begin{align*}
\phi \colon \pi_1(Y) &\to \Z/k\Z\\
g &\mapsto [g] \cdot_Y [\Sigma]
\end{align*}
given by the intersection number, in $Y$, with the surface~$\Sigma$. Note that
$[\alpha] \in \ker \phi$. Furthermore, pick a surface $S \subset \Sigma$ bounding
$\alpha$. For this surface $S$, we have $\pi_1(S) \subset \ker \phi$ and so $S$ lifts along the cover $\ol Y \to Y$.

Pick a curve $\eta \subset Y$ not intersecting $S$ such that $\phi(\eta) = 1$, and pick a genus two handlebody~$B$ whose two cores are the curves $\eta$ and $\alpha$. Just as in Figure \ref{fig:nullhomotopypattern}, consider the knots~$K_n \subset B$ and also the covering links~$L_n$ of $K_n$, now corresponding to our cover $\ol Y \to Y$. The computation of linking numbers is in fact much easier now than in the previous theorem, as we can use $S$ to build a Seifert surface for $L_n$.
As shown in Figure \ref{fig:orsonspicture},\begin{figure}[h]
\labellist
\pinlabel $S$ [l] at 225 200
\pinlabel $\alpha$ [l] at 212 73
\pinlabel $\eta$ [l] at 500 77
\pinlabel $n$ [l] at 445 122
\endlabellist
\includegraphics[width=\textwidth]{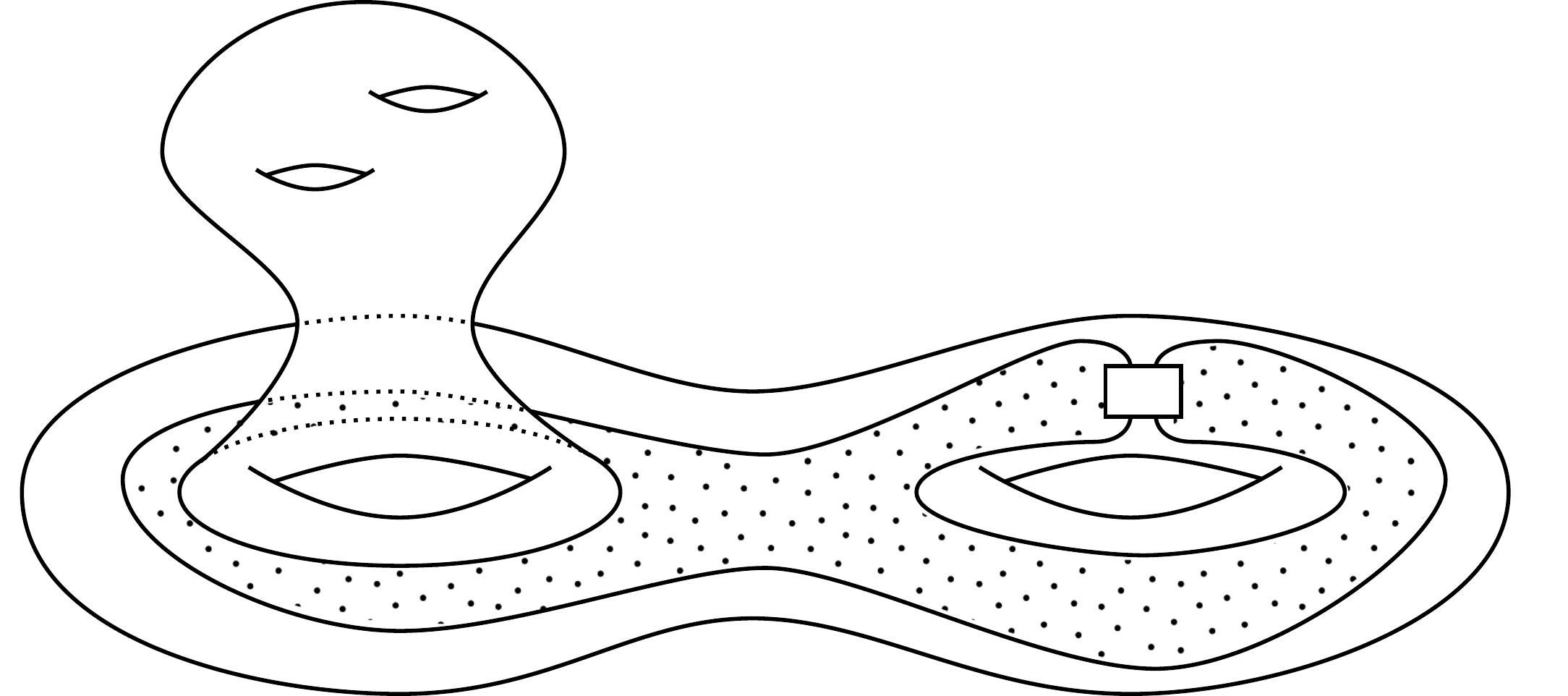}
\caption{The Seifert surface for $K_n$.}
\label{fig:orsonspicture}
\end{figure} the link $\alpha\cup K_n$ has a Seifert surface in $B$, to which we attach the surface $S$ along $\alpha$, resulting in a Seifert surface for $K_n$. This surface clearly lifts to give a Seifert surface for each component of the covering link $L_n$.

Suppose $\phi(\eta)$ is not $2$--torsion. In the other case the argument is similar.
Take $C$, $D$ to be link components of $L_n$ related by an $n$--twist box, and
assume we have lifted our Seifert surface for $K_n$ to a Seifert surface for
$D$. Decomposing $D$ as in Figure~\ref{fig:mickeysplit}, we see that
$\lk(C,D)= n + \lk(C,D_\text{dist})$. But now the algebraic intersection $[{\ol S}]\cdot
[D_\text{dist}]=\lk(C,D_\text{dist})$ where ${\ol S}$ is the lift of $S$ in our lifted Seifert
surface. But $D_\text{dist}$ maps to the boundary of $S$ in $Y$, so we must have
geometric intersection ${\ol S}\cdot D_\text{dist}=0$.

Thus, any two components of $L_n$ link exactly 0 or $n$ times. By Lemma~\ref{lem:linkingNumber} we see that the links $L_n$ lie in distinct almost-concordance classes,
and so $\{ K_n \,|\, n>1\}$ represents a set of distinct almost-concordance classes in $\mathcal{C}_x(Y)$.
\end{proof}

\bibliographystyle{amsalpha}
\def\MR#1{}
\bibliography{writeup}
\end{document}